\theoremstyle{plain}
\newtheorem{theorem}{Theorem}[section]
\newtheorem*{theorem*}{Theorem \ref{thm:appl}}
\newtheorem{proposition}[theorem]{Proposition}
\newtheorem{conjecture}{Conjecture}
\newtheorem{lemma}[theorem]{Lemma}
\theoremstyle{definition}
\newtheorem{remark}[theorem]{Remark}
\numberwithin{equation}{section}
\DeclareMathOperator{\trace}{trace}
\DeclareMathOperator{\grad}{grad}
\DeclareMathOperator{\Div}{div}
\DeclareMathOperator{\id}{Id}
\DeclareMathOperator{\End}{End}
\newcommand{\uu}{\mathcal{U}}
\newcommand{\ww}{\mathcal{W}}
\newcommand{\xx}{\mathcal{X}}
\newcommand{\yy}{\mathcal{Y}}
\newcommand{\kk}{\mathcal{K}}
\newcommand{\vv}{\mathcal{V}}
\newcommand{\zz}{\mathcal{Z}}
\newcommand{\U}{\mathcal{Q}}
\newcommand{\F}{\mathcal{F}}
\title[Biconservative surfaces]{Biconservative Weingarten surfaces with flat normal bundle in $N^4 (\epsilon)$}
\author{\c Stefan Andronic, Stefano Montaldo, Cezar Oniciuc and Antonio Sanna}
\address{Faculty of Mathematics, Al. I. Cuza University of Iasi, Blvd. Carol I, no. 11, 700506 Iasi, Romania} \email{stefanandronic215@gmail.com}
\address{Dipartimento di Matematica e Informatica, Universit\` a degli Studi di Cagliari, Via Ospedale 72, 09124 Cagliari, Italia} \email{montaldo@unica.it} 
\address{Faculty of Mathematics, Al. I. Cuza University of Iasi, Blvd. Carol I, no. 11, 700506 Iasi, Romania} \email{oniciucc@uaic.ro}
\address{Address: Dipartimento di Matematica e Informatica, Universit\` a degli Studi di Cagliari, Via Ospedale 72, 09124 Cagliari, Italia}
\address{Current Address: Faculty of Mathematics, Al. I. Cuza University of Iasi, Blvd. Carol I, no. 11, 700506 Iasi, Romania}
\email{antonio.sanna4@unica.it}
\subjclass[2020]{Primary 53C42. Secondary 53C40.}
\keywords{Biconservative surfaces, biharmonic surfaces, flat normal bundle}
\thanks{The authors S.M. and A.S. are members of the Italian National Group G.N.S.A.G.A. of INdAM. The author A. S. was supported by a NRRP scholarship - funded by the European Union - NextGenerationEU - Mission 4, Component 1, Investment 3.4.
}
\begin{document}
	\maketitle
	
	\begin{abstract}
		In this paper, we extend our investigation of the class of biconservative surfaces with non-constant mean curvature in 4-dimensional space forms $N^4(\epsilon)$. Specifically, we focus on biconservative surfaces with non-parallel normalized mean curvature vector fields (non-PNMC) that have flat normal bundles and are Weingarten. In our initial result we obtain the compatibility conditions for this class of biconservative surfaces in terms of an ODE system. Subsequently, by prescribing the flat connection in the normal bundle, we prove an existence result for the considered class of biconservative surfaces. Furthermore, we determine all non-PNMC biconservative Weingarten surfaces with flat normal bundles that either exhibit a particular form of the shape operator in the direction of the mean curvature vector field or have constant Gaussian curvature $K = \epsilon$. Finally, we prove that such surfaces cannot be biharmonic.	
	\end{abstract}
	
	\section{Introduction}
	
	In recent years, the theory of biconservative submanifolds has undergone substantial development as an effort to generalize the notion of biharmonic submanifolds. The biharmonic isometric immersions $\varphi : M^m \to N^n$, that is biharmonic submanifolds, are characterized by the vanishing of the bitension field
	\[
	\frac 1 m \tau_2 (\varphi) = - \Delta^\varphi H - \trace R^N \left ( d \varphi (\cdot), H \right ) d \varphi (\cdot) = 0,
	\]
	where $\Delta ^\varphi$ is the rough Laplacian acting on sections of the pull-back bundle $\varphi ^{-1} \left ( TN ^n \right )$, $R^N$ denotes the curvature tensor field on $N^n$ and $H$ is the mean curvature vector field associated to the immersion $\varphi$. Of course, any minimal submanifold, that is $H = 0$, is biharmonic and we are interested in studying biharmonic submanifolds which are non-minimal, called proper biharmonic. Naturally, the biharmonic equation decomposes into its tangent and normal components. 
	
	The study of biharmonic submanifolds, due to frequent incompatibility between the normal and tangent components, has proved to be relatively rigid. To overcome this rigidity, the biconservative submanifolds are defined by the vanishing of the tangent component of the biharmonic equation, that is
	\[
	\left ( \tau_2 (\varphi) \right )^\top = 0.
	\]
	Biconservative submanifolds can be also characterized (in fact, this was the original definition, see \cite{CaddeoMontaldoOniciucPiu2014}) as the submanifolds with divergence-free stress-bienergy tensor $S_2$, where $S_2$ has a variational meaning (see \cite{Jiang1987} and \cite{LoubeauMontaldoOniciuc2008}). For recent surveys on this topic we refer to \cite{FetcuOniciuc2022} and \cite{FuZhan2023}.
	
	The study of biconservative submanifolds started in 1995 when Hasanis and Vlachos, in an attempt to solve the Chen Conjecture (see \cite{Chen1991}) in dimension $4$, classified all biconservative hypersurfaces in the $4$-dimensional Euclidean space $\mathbb R^4$, see \cite{HasanisVlachos1995}. In their paper, the biconservative hypersurfaces in Euclidean spaces were referred to as H-hypersurfaces. Then, there were studied the biconservative hypersurfaces in other space forms (see, for example, \cite{BibiSoretVille2024Arxiv}, \cite{FuHongYangZhan2025} and \cite{MontaldoOniciucRatto2016Annali}). An important feature of biconservative hypersurfaces in space forms is that those with constant mean curvature (CMC) are inherently biconservative, while in the non-CMC case, one of its principal directions is spanned by the gradient of the mean curvature function and the corresponding principal curvature is a certain constant multiplied by the mean curvature function. 
	
	In the case of submanifolds with codimension greater than $1$, the study of the biconservativity becomes more challenging, although the geometry shows more richness.
	
	In the special case of biconservative surfaces, there have been obtained interesting results. For example, for any biconservative surface in an arbitrary target manifold $N^n$, the generalized Hopf function 
	\[
	Q = \langle B (\partial_z, \partial_z), H \rangle
	\]
	is holomorphic if and only if $M^2$ has constant mean curvature (see \cite{LoubeauOniciuc2014}, \cite{MontaldoOniciucRatto2016JGeoAn} and \cite{Nistor2017}). Here, $B$ denotes the second fundamental form of the surfaces and $\partial _z = \left ( \partial _x - i \partial _y \right ) / 2$, where $\left (x, y \right )$ are isothermal coordinates.
	
	In \cite{MontaldoOniciucRatto2016JGeoAn} the rigidity of CMC biconservative surfaces in $4$-dimensional space forms with non-zero sectional curvature was proved. More precisely, such surfaces must have parallel mean curvature tensor field (PMC).
	
	The non-CMC case is difficult to handle and it is necessary to impose additional hypotheses. A natural one is to consider the surfaces with parallel normalized mean curvature vector field (PNMC). All non-CMC, PNMC biconservative surfaces in $4$-dimensional space forms were classified in \cite{NistorOniciucTurgaySen2023}, \cite{NistorRusu2024} and \cite{SenTurgay2018} and these surfaces have two important properties: they have flat normal bundle and they are Weingarten surfaces (W-surfaces).
	
	In our paper, we continue the study of biconservative surfaces in $4$-dimensional space forms in the non-CMC case by relaxing the PNMC hypothesis and, naturally, considering non-CMC (non-PNMC) biconservative W-surfaces with flat normal bundle. We first explore the case of surfaces satisfying the condition $3 f^2 + K - \epsilon \neq 0$, where $f$ is the mean curvature function and $K$ is the Gaussian curvature of the surface. As a first main result, we prove that such surfaces are characterized by a first order ODE system \eqref{eq:differentialSystemTangentPartConverse}. Moreover, this system represents the compatibility condition for this class of biconservative surfaces (see Theorems \ref{th:directTheorem} and \ref{th:existenceTheorem}). This ensures the existence of non-CMC (non-PNMC) biconservative W-surfaces with flat normal bundle. Using system \eqref{eq:differentialSystemTangentPartConverse}, we determine all non-CMC biconservative W-surfaces with flat normal bundle for which one of its principal directions is spanned by the gradient of the mean curvature function and the corresponding eigenvalue for the shape operator in the direction of the mean curvature vector field is twice the mean curvature function (see Theorem \ref{th:exampleYZero}). Then, using a reformulation of the main system \eqref{eq:differentialSystemTangentPartConverse}, that is system \eqref{eq:differentialSystemTangentPartWithK}, we find all non-CMC biconservative W-surfaces with flat normal bundle and constant Gaussian curvature $K = \epsilon$ in any $4$-dimensional space form $N^4 (\epsilon)$ (see Theorem \ref{th:exampleKEqualsEpsilon}). A more general solution of system \eqref{eq:differentialSystemTangentPartWithK} is presented in Proposition \ref{th:exampleMoreGeneral}. In the last part of this section we investigate the case when $3 f^2 + K - \epsilon = 0$. We show in Theorem \ref{th:twoImmersionsKEqualsEps-3F2} that, for a certain given abstract surface, there exists a $1$-parametric family of non-CMC biconservative W-immersions with flat normal bundle in a $4$-dimensional space form with $3 f^2 + K - \epsilon = 0$ but there also exists a unique non-CMC biconservative immersion in a $3$-dimensional space form $N^3 (\epsilon)$ with the same mean curvature $f$.
	
	As a byproduct of our work, we review the case of non-CMC, PNMC biconservative surfaces studied in \cite{NistorOniciucTurgaySen2023}, \cite{NistorRusu2024} and \cite{SenTurgay2018}, as a singular case of our first order ODE system. The PNMC biconservative surfaces are characterized by system \eqref{eq:differentialSystemPNMC}. Using our approach, we reprove two properties of PNMC biconservative surfaces which do not hold in the non-PNMC case: if we fix the domain abstract surface $\left ( M^2, g \right )$, then there exists at most one PNMC biconservative immersion $\varphi : \left ( M^2, g \right ) \to N^4 (\epsilon)$ (see Theorem \ref{th:uniquenessPNMC}); next, we redetermine all abstract surfaces $\left (M^2, g \right )$ which admit (unique) PNMC biconservative immersions (see Proposition \ref{th:existencePNMC}).
	
	In the last part of our paper, we investigate the biharmonicity of non-CMC W-surfaces with flat normal bundle in $4$-dimensional space forms. For this, we extend the first order ODE system \eqref{eq:differentialSystemTangentPartConverse} to the biharmonic case and show that there are neither such surfaces with the shape operator described in Theorem \ref{th:exampleYZero}, nor such surfaces with constant Gaussian curvature given in Theorem \ref{th:exampleKEqualsEpsilon}.
	
	We end the paper with our conviction about the (non-)existence of non-CMC biharmonic W-surfaces with flat normal bundle in $4$-dimensional space forms (see Conjecture \ref{th:OpenProblem}). The validity of Conjecture \ref{th:OpenProblem} is the key point in the proof of the classification result stated in Theorem \ref{th:theoremAfterOpenProblem}.
	
	Our belief concerning the full classification of proper biharmonic surfaces in $4$-dimensional space forms is expressed in Conjecture \ref{th:conjecture}.
	
	%{\color{purple} \textbf{ar trebui eliminat unul din aceste paragrafe} We note that the biharmonic W-surfaces with flat normal bundle in $4$-dimensional pseudo-euclidean spaces were studied in \cite{Ishikawa1992} and in the first part of this paper we use similar techniques. However, the main results of this paper concern biconservative surfaces and are obtained employing further different techniques.}
	
	\noindent \textbf{Conventions.} In this paper, all manifolds are assumed to be connected. Also, all immersions are assumed to be isometric immersions. The metrics on arbitrary manifolds will be denoted by $\langle \cdot, \cdot \rangle$ or will not be explicitly indicated. 
	
	Let $M$ be a Riemannian manifold and denote by $\nabla$ the Levi-Civita connection of $M$. The rough Laplacian acting on the set of all sections in an arbitrary vector bundle $\Upsilon$ over $M$ is given by
	\[
	\Delta ^\Upsilon = - \trace \left ( \nabla ^\Upsilon \nabla ^\Upsilon - \nabla ^\Upsilon _\nabla \right ),
	\]
	where $\nabla ^\Upsilon$ is an affine connection on $\Upsilon$, and the curvature tensor field is 
	\[
	R ^\Upsilon (X, Y) \sigma = \nabla ^\Upsilon _X \nabla ^\Upsilon _Y \sigma - \nabla ^\Upsilon _Y \nabla ^\Upsilon _X \sigma - \nabla ^\Upsilon _{[X, Y]} \sigma,
	\]
	for any $X, Y \in C(TM)$ and any $\sigma \in C(\Upsilon)$. By $\mathbb R^*$ we denote the set of all non-zero numbers.
	
	\section{Preliminaries}
	
	In this section we fix the notations used in this paper and present some known results which will be useful later. 
	
	Let $\varphi : M^m \to N^n (\epsilon)$ be an immersion, that is $M^m$ is a submanifold of $N^n (\epsilon)$. Locally, we can identify $M^m$ with its image through $\varphi$, a tangent vector field $X$ with $d \varphi (X)$ and the connection in the pull-back bundle $\nabla ^\varphi _X d \varphi (Y)$ with $\nabla ^N _X Y$, where $\nabla ^N$ is the Levi-Civita connection on $N^n (\epsilon)$. The Gauss and the Weingarten formulas are
	\[
	\nabla ^N _X Y = \nabla _X Y + B (X, Y), \quad X, Y \in C(TM)
	\]
	and
	\[
	\nabla ^N _X \eta = - A_\eta X + \nabla ^\perp _X \eta, \quad \eta \in C(NM),
	\]
	respectively, where $B \in C \left ( \odot ^2 T^* M \otimes NM \right )$ is called the \textit{second fundamental form} of $M^m$ in $N^n (\epsilon)$, $A_\eta \in C \left ( T^* M \otimes TM \right )$ is the \textit{shape operator} of $M^m$ in $N^n (\epsilon)$ in the direction of $\eta \in C(NM)$ and $\nabla ^\perp$ is the \textit{induced connection} in the \textit{normal bundle} $NM$ of $M^m$ in $N^n (\epsilon)$. The \textit{mean curvature vector field} of $M^m$ in $N^n (\epsilon)$ is 
	\[
	H = \frac 1 m \trace B.
	\] 
	We denote by $R$ the curvature tensor field of $M$.
	
	Now, we recall the fundamental equations of an arbitrary submanifold $M^m$ in a space form $N^n (\epsilon)$. 
	
	The \textit{Gauss equation} is 
	\begin{equation} \label{eq:GaussEquationInSpaceForms}
		\epsilon \left ( \langle X, W \rangle \langle Y, Z \rangle - \langle X, Z \rangle \langle Y, W \rangle \right ) = \langle R (X, Y) Z, W \rangle + \langle B (X, Z), B (Y, W) \rangle - \langle B (X, W), B(Y, Z) \rangle,
	\end{equation}
	for any $X, Y, Z, W \in C(TM)$.
	
	The \textit{Codazzi equation} is 
	\begin{equation} \label{eq:CodazziEquationInSpaceForms}
		\left ( \nabla ^\perp _X B \right ) (Y, Z) = \left ( \nabla ^\perp _Y B \right ) (X, Z),
	\end{equation}
	for any $X, Y, Z \in C(TM)$.
	
	The \textit{Ricci equation} is 
	\begin{equation} \label{eq:RicciEquationInSpaceForms}
		\left \langle R ^\perp (X, Y) \xi, \eta \right \rangle = \left \langle \left [ A_\xi, A_\eta \right ] X, Y \right \rangle,
	\end{equation}
	for any $X, Y \in C(TM)$ and for any $\xi, \eta \in C(NM)$, where $R ^\perp$ is the curvature tensor field in the normal bundle $NM$.
	
	A submanifold $M^m$ of a space form $N^n (\epsilon)$ is said to have \textit{flat normal bundle} if the curvature tensor field in the normal bundle vanishes identically, that is
	\[
	R ^\perp = 0.
	\]
	For geometric properties of submanifolds with flat normal bundle we refer the reader to \cite{DajczerTojeiroBook2019}.
	
	Next, we recall a characterization result for biharmonic submanifolds in space forms
	
	\begin{theorem} [\cite{ChenBook2015}, \cite{Oniciuc2002}]
		Let $\varphi : M^m \to N^n (\epsilon)$ be an immersion. Then, $\varphi$ is biharmonic if and only if
		\begin{equation} \label{eq:biharmonicEquationTangentPart}
			2 \trace A_{\nabla ^\perp _{(\cdot)} H} (\cdot) + \frac m 2 \grad |H|^2 = 0
		\end{equation}
		and
		\begin{equation} \label{eq:biharmonicEquationNormalPart}
			\Delta ^\perp H + \trace B (\cdot, A_H (\cdot) ) - m \epsilon H = 0.
		\end{equation}
	\end{theorem}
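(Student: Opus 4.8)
The plan is to compute the bitension field directly and split it into its tangential and normal parts. Since $N^n(\epsilon)$ has constant sectional curvature, its curvature tensor is $R^N(X,Y)Z = \epsilon\bigl(\langle Y,Z\rangle X - \langle X,Z\rangle Y\bigr)$, so evaluating $\trace R^N(d\varphi(\cdot),H)d\varphi(\cdot)$ along a local orthonormal frame $\{e_i\}_{i=1}^{m}$ of $M$ and using $H\in C(NM)$, hence $\langle H,e_i\rangle=0$, gives $\trace R^N(d\varphi(\cdot),H)d\varphi(\cdot) = -m\epsilon H$. Consequently $\tfrac1m\tau_2(\varphi) = -\Delta^\varphi H + m\epsilon H$, and everything reduces to expanding $\Delta^\varphi H$ and identifying which of its terms are tangent to $M$ and which are normal.

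To do this I would fix $p\in M$, take $\{e_i\}$ geodesic at $p$ (so $\nabla_{e_i}e_j=0$ at $p$), and compute $\Delta^\varphi H = -\sum_i\nabla^\varphi_{e_i}\nabla^\varphi_{e_i}H$ there. Starting from the Weingarten formula $\nabla^\varphi_{e_i}H = -A_H e_i + \nabla^\perp_{e_i}H$ and differentiating once more --- applying the Gauss formula to the tangential summand $-A_H e_i$ and the Weingarten formula to the normal summand $\nabla^\perp_{e_i}H$ --- one gets at $p$
\[
\nabla^\varphi_{e_i}\nabla^\varphi_{e_i}H = -\nabla_{e_i}(A_H e_i) - B(e_i,A_H e_i) - A_{\nabla^\perp_{e_i}H}e_i + \nabla^\perp_{e_i}\nabla^\perp_{e_i}H.
\]
Summing over $i$, using $-\sum_i\nabla^\perp_{e_i}\nabla^\perp_{e_i}H = \Delta^\perp H$ at $p$, and introducing $(\bar\nabla_X A)_\eta Y := \nabla_X(A_\eta Y) - A_{\nabla^\perp_X\eta}Y - A_\eta(\nabla_X Y)$ so that $\nabla_{e_i}(A_H e_i) = (\bar\nabla_{e_i}A)_H e_i + A_{\nabla^\perp_{e_i}H}e_i$ at $p$, I obtain
\[
\Delta^\varphi H = \sum_i (\bar\nabla_{e_i}A)_H e_i + 2\trace A_{\nabla^\perp_{(\cdot)}H}(\cdot) + \trace B(\cdot,A_H(\cdot)) + \Delta^\perp H,
\]
where the last two terms are normal, the middle one is tangential, and only the first sum remains to be identified.

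The crucial step, and the main obstacle, is to prove $\sum_i(\bar\nabla_{e_i}A)_H e_i = \tfrac m2\grad|H|^2$. For this I would first check, merely by differentiating the identity $\langle A_\eta Y,Z\rangle = \langle B(Y,Z),\eta\rangle$, that $\langle(\bar\nabla_X A)_\eta Y,Z\rangle = \langle(\nabla^\perp_X B)(Y,Z),\eta\rangle$; combined with the symmetry of $B$ and the Codazzi equation \eqref{eq:CodazziEquationInSpaceForms}, this quantity is totally symmetric in $X,Y,Z$. Using this symmetry to rewrite $\langle(\bar\nabla_{e_i}A)_H e_i,e_j\rangle = \langle(\bar\nabla_{e_j}A)_H e_i,e_i\rangle$, summing over $i$, and using $\sum_i B(e_i,e_i)=mH$ with the geodesic frame, one gets $\sum_i\langle(\bar\nabla_{e_i}A)_H e_i,e_j\rangle = m\langle\nabla^\perp_{e_j}H,H\rangle = \tfrac m2\langle\grad|H|^2,e_j\rangle$, which is the claim. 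Substituting it back,
\[
\Delta^\varphi H = \Bigl(2\trace A_{\nabla^\perp_{(\cdot)}H}(\cdot) + \tfrac m2\grad|H|^2\Bigr) + \Bigl(\Delta^\perp H + \trace B(\cdot,A_H(\cdot))\Bigr),
\]
the first bracket tangent to $M$ and the second normal; hence $\tfrac1m\tau_2(\varphi)=0$ splits exactly into \eqref{eq:biharmonicEquationTangentPart} and (after absorbing $m\epsilon H$ into the normal part) \eqref{eq:biharmonicEquationNormalPart}. Beyond the Codazzi--trace identity, the only work is the careful tracking of tangential and normal components.
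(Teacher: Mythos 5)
Your proposal is correct: the splitting of $\tau_2(\varphi)$ via the space-form curvature identity, the Gauss--Weingarten expansion of $\Delta^\varphi H$ in a geodesic frame, and the Codazzi-based symmetry argument giving $\sum_i(\bar\nabla_{e_i}A)_H e_i = \tfrac m2\grad|H|^2$ all check out, including the sign conventions used in this paper. The paper itself does not prove this theorem but quotes it from \cite{ChenBook2015} and \cite{Oniciuc2002}, and your computation is essentially the standard derivation given there, so there is nothing further to compare.
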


	Equations \eqref{eq:biharmonicEquationTangentPart} and \eqref{eq:biharmonicEquationNormalPart} represent the vanishing of the tangent and normal components of the bitension vector field $\tau_2 (\varphi)$, respectively. Consequently, a biconservative immersion is characterized only by \eqref{eq:biharmonicEquationTangentPart}. 
	
	We just recall here that the \textit{stress-bienergy} tensor $S_2$ associated to an immersion $\varphi : M^m \to N^n$ is given by 
	\begin{equation*}
		S_2 = - \frac {m^2} 2 |H|^2 \id + 2 m A_H
	\end{equation*}
	and it satisfies 
	\begin{equation*}
		\left ( \Div S_2 \right )^\# = - \left ( \tau_2 (\varphi) \right )^\top.
	\end{equation*}
	
	Following \cite{Ishikawa1992}, we define \textit{W-surfaces}, or \textit{Weingarten surfaces}, in $4$-dimensional space forms as immersions $\varphi : M^2 \to N^4 (\epsilon)$ such that there exists a smooth function $W : \mathbb R^2 \to \mathbb R$, $W = W (x, y)$ with non-zero gradient everywhere
	%\[
	%\left ( \frac {\partial W} {\partial x} \right )^2 + \left ( \frac {\partial W} {\partial y} \right )^2 > 0, \quad \text{on } \mathbb R^2
	%\]
	such that 
	\[
	W (f, K) = 0, \quad \text{on } M,
	\]
	where $f = |H|$ is the \textit{mean curvature function} of $M$ and $K$ is the \textit{Gaussian curvature} of $M$.
	
	\section{Biconservative surfaces}
	
	Let $\varphi : M^2 \to N^4 (\epsilon)$ be a surface and assume that $H \neq 0$ at any point of $M$. Then, the mean curvature function $f = |H|$ is smooth and we set 
	
	\begin{equation} \label{eq:MeanCurvatureNormalized}
		E_3 = \frac{1}{f} H \in C(NM).
	\end{equation}
	Let $\{ E_1, E_2 \}$ be an orthonormal frame field tangent  to $M$ defined on an open subset $U \subset M$ and let $E_4 \in C(NM)$ be a unit normal section orthogonal to $E_3$. Since our results will be of local nature, we assume that $U = M$.
	
	We can assume that $\{ E_a \}_{a=1}^4$ is the restriction to $M$ of a local orthonormal frame field on $N^4$, also denoted by $\{ E_a \}_{a=1}^4$. Let  $\omega_a^b \in \Lambda^1 \left (N^4 \right )$, $1 \leq a,b \leq 4$ be the \textit{connection forms} on $N^4$ with respect to $E_a$, defined by
	\[
		\nabla_V^N E_a = \omega_a^b(V) E_b, \quad \text{for any } V \in C \left (TN^4 \right ).
	\]
	We use the same notation $\omega_a^b$ for the pull-back $\varphi^* \omega_a^b$ and it will be clear from the context to which of them we are referring to. It can be shown that the $1$-form  $\omega_1^2 \in \Lambda^1(M)$ is the connection form of $M$, that is 
	\[
	\nabla _X E_1 = \omega^2_1 (X) E_2, \quad \text{for any } X \in C \left (TM^2 \right ).
	\] 
	Denote by $A_3$ and $A_4$ the shape operators associated to $E_3$ and $E_4$, respectively. 
	
	In the following we assume that $\grad f \neq 0$ at any point of $M$. This assumption is natural since CMC biconservative surfaces in $4$-dimensional space forms $N^4 (\epsilon)$ were classified in \cite{MontaldoOniciucRatto2016JGeoAn}. It is known that \textit{pseudo-umbilical}, that is $A_3 = f \id$, biconservative surfaces in $N^4 (\epsilon)$ are CMC (see \cite{CaddeoMontaldoOniciuc2002} and \cite{Dimitric1992}). Thus, we further assume that $A_3 \neq f \id$ at any point of $M$. This means that the eigenvalues of $A_3$ have constant multiplicities equal to $1$ at any point. It follows that, the eigenvalue functions $k_1$ and $k_2$ are smooth functions on $M$ and locally we can choose a frame field $\{ E_1, E_2 \}$ tangent to $M$ such that
	\begin{equation} \label{eq:ShapeOperator3}
		A_3 E_1 = k_1 E_1, \quad  A_3 E_2 = k_2 E_2.
	\end{equation}
	We assume that $E_1$ and $E_2$ are defined on $M$ and denote by $\omega^1, \omega^2 \in \Lambda^1(M)$ the dual frame field of $\{ E_1, E_2 \}$ on $M$. With respect to this dual frame we have
	\begin{equation} \label{eq:Omega12}
		\omega_2^1  = - \omega_1^2 = a_1 \omega^1 + a_2 \omega^2, 
	\end{equation}
	and
	\begin{equation} \label{eq:Omega43}
		\omega_3^4  = - \omega_4^3 = b_1 \omega^1 + b_2 \omega^2,
	\end{equation}
	where $a_1, a_2,  b_1, b_2 \in C^{\infty}(M)$.
	
	When the surface is Weingarten and has flat normal bundle, it enjoys several properties which we describe in the following results. We point out that the Propositions \ref{th:propertiesOurSurface}, \ref{th:propertiesBiconservativeSurface}, \ref{th:normalConnection} and Lemma \ref{th:constraintNormalConnection} are similar to those proved in \cite{Ishikawa1992}. We have added the statements and their proofs for completeness and for consistency of the notations.
	
	\begin{proposition} \label{th:propertiesOurSurface}
		Let $\varphi: M^2 \to N^4(\epsilon)$ be a surface with flat normal bundle. Assume that $H \neq 0$, $\grad f \neq 0$ and $A_3 \neq f \id$ at any point. Then, the following hold
		
		\begin{enumerate}[label = \alph*), itemsep=5pt]
			\item \label{item:A4} the shape operator in the direction of $E_4$ is given by
			\begin{equation} \label{eq:ShapeOperator4}
				A_4 E_1 = \alpha E_1 \quad \text{and} \quad A_4 E_2 = -\alpha E_2,
			\end{equation}
			 for some $\alpha \in C^{\infty}(M)$;
			 \item \label{item:secondFundamentalForm} the second fundamental form is given by
			 \begin{equation} \label{eq:SecondFundamentalForm}
			 	B(E_1, E_1) = k_1 E_3 + \alpha E_4, \quad  B(E_1, E_2)=B(E_2, E_1)=0, \quad B(E_2, E_2) = k_2 E_3 - \alpha E_4;
			 \end{equation}
			 \item \label{item:connections} the Levi-Civita connection of $M$ is given by
			 \begin{equation} \label{eq:ConnectionFormSurface}
			 	\nabla_{E_1} E_1 = -a_1 E_2, \quad \nabla_{E_1} E_2 = a_1 E_1, \quad \nabla_{E_2} E_1 = -a_2 E_2, \quad \nabla_{E_2} E_2 = a_2 E_1,
			 \end{equation}
			 the connection in the normal bundle takes the expression
			 \begin{equation} \label{eq:ConnectionFormNormal}
			 	\nabla_{E_1}^\perp E_3 = b_1 E_4, \quad \nabla_{E_1}^\perp E_4 = -b_1 E_3, \quad \nabla_{E_2}^\perp E_3 = b_2 E_4, \quad \nabla_{E_2}^\perp E_4 = -b_2 E_3
			 \end{equation}
		 	 and 
		 	 \begin{equation} \label{eq:FlatNormalBundleEquation}
		 	 	E_1(b_2) - E_2(b_1) = a_1 b_1 + a_2 b_2;
		 	 \end{equation}
			 \item \label{item:gaussianCurvature} the Gaussian curvature is
			 \begin{equation} \label{eq:GaussEquation}
			 	 K = E_1(a_2) - E_2(a_1) - a_1^2 - a_2^2;
			 \end{equation}
			 
			 \item \label{item:fundamentalEquations} the Gauss and Codazzi equations are equivalent to
			 \begin{align} 
			 	& K = \epsilon + k_1 k_2 - \alpha^2 \label{eq:GaussianCurvatureFromEquation} \\
			 	& E_1(k_2) = a_2(k_2 - k_1) - \alpha b_1 \label{eq:E1kappa2} \\
			 	& E_2(k_1) = a_1(k_2 - k_1) + \alpha b_2 \label{eq:E2kappa1} \\
			 	& E_1(\alpha) = 2\alpha a_2 + k_2 b_1 \label{eq:E1alpha} \\
			 	& E_2(\alpha) = -2\alpha a_1 - k_1 b_2; \label{eq:E2alpha}
			 \end{align}
			 
			 \item \label{item:WSurface} moreover, if $M^2$ is also a W-surface, then $\grad f$ is parallel to $\grad K$, that is
			 \begin{equation} \label{eq:WSurfaceEquaition}
			 	\grad f || \grad K.
			\end{equation}	
		\end{enumerate}
	\end{proposition}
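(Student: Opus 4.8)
The plan is to read all six assertions off the structure equations of $M^2$ in $N^4(\epsilon)$ in the frame $\{E_1,E_2,E_3,E_4\}$ fixed above, the only genuinely non-formal input being the flat normal bundle hypothesis. First I would record two preliminary facts: taking traces in the identity $B(E_i,E_j)=\langle A_3E_i,E_j\rangle E_3+\langle A_4E_i,E_j\rangle E_4$ and using $H=fE_3$ gives $k_1+k_2=\trace A_3=2f$ and $\trace A_4=2\langle H,E_4\rangle=0$; in particular $k_1=k_2$ would force $A_3=f\id$, which is excluded, so $A_3$ has simple spectrum at every point. For part \ref{item:A4} I would then substitute $\xi=E_3$, $\eta=E_4$ into the Ricci equation \eqref{eq:RicciEquationInSpaceForms}: the left-hand side vanishes since $R^\perp=0$, so $\langle[A_3,A_4]E_1,E_2\rangle=0$, and because $[A_3,A_4]$ is skew-symmetric on the two-dimensional tangent space this yields $[A_3,A_4]=0$. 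Two commuting self-adjoint operators, one with simple spectrum, share an orthonormal eigenbasis, so $A_4$ is diagonal in $\{E_1,E_2\}$, and $\trace A_4=0$ forces $A_4E_1=\alpha E_1$, $A_4E_2=-\alpha E_2$ with $\alpha=\langle A_4E_1,E_1\rangle$ smooth. Part \ref{item:secondFundamentalForm} is then immediate by decomposing $B(E_i,E_j)$ along $E_3,E_4$ via \eqref{eq:ShapeOperator3} and \eqref{eq:ShapeOperator4}.

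Parts \ref{item:connections} and \ref{item:gaussianCurvature} are bookkeeping. The tangential formulas \eqref{eq:ConnectionFormSurface} follow from $\nabla_XE_1=\omega_1^2(X)E_2$, $\nabla_XE_2=\omega_2^1(X)E_1$ and the expansion \eqref{eq:Omega12}; the normal formulas \eqref{eq:ConnectionFormNormal} follow identically from $\nabla^\perp_XE_3=\omega_3^4(X)E_4$, $\nabla^\perp_XE_4=\omega_4^3(X)E_3$ and \eqref{eq:Omega43}. For \eqref{eq:FlatNormalBundleEquation} I would compute $R^\perp(E_1,E_2)E_3$ directly from \eqref{eq:ConnectionFormNormal}, using also $[E_1,E_2]=\nabla_{E_1}E_2-\nabla_{E_2}E_1=a_1E_1+a_2E_2$, and set it to zero; the $E_3$-components cancel and the $E_4$-component is the stated identity. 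Likewise, expanding $K=\langle R(E_1,E_2)E_2,E_1\rangle$ with \eqref{eq:ConnectionFormSurface} yields \eqref{eq:GaussEquation}.

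For part \ref{item:fundamentalEquations} I would substitute \eqref{eq:SecondFundamentalForm} and the connection formulas into the ambient Gauss and Codazzi equations. Choosing $(X,Y,Z,W)=(E_1,E_2,E_2,E_1)$ in \eqref{eq:GaussEquationInSpaceForms} and using \eqref{eq:GaussEquation} gives $\epsilon=K-k_1k_2+\alpha^2$, that is \eqref{eq:GaussianCurvatureFromEquation}. The only non-trivial instances of the Codazzi equation \eqref{eq:CodazziEquationInSpaceForms} are $(\nabla^\perp_{E_1}B)(E_2,E_2)=(\nabla^\perp_{E_2}B)(E_1,E_2)$ and $(\nabla^\perp_{E_2}B)(E_1,E_1)=(\nabla^\perp_{E_1}B)(E_1,E_2)$, where $(\nabla^\perp_XB)(Y,Z)=\nabla^\perp_X\!\big(B(Y,Z)\big)-B(\nabla_XY,Z)-B(Y,\nabla_XZ)$; since $B(E_1,E_2)=0$, comparing the $E_3$- and $E_4$-components of the first identity yields \eqref{eq:E1kappa2} and \eqref{eq:E1alpha}, and of the second yields \eqref{eq:E2kappa1} and \eqref{eq:E2alpha}. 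Finally, for part \ref{item:WSurface} I would differentiate the Weingarten relation $W(f,K)=0$ along $E_1$ and $E_2$: both $(E_1(f),E_1(K))$ and $(E_2(f),E_2(K))$ are then orthogonal in $\mathbb R^2$ to the everywhere-nonzero vector $(W_x(f,K),W_y(f,K))$, hence linearly dependent, which is precisely $E_1(f)E_2(K)-E_2(f)E_1(K)=0$, i.e.\ $\langle\grad f,(\grad K)^\perp\rangle=0$.

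No step is conceptually hard: the statement is an organized unpacking of the structure equations. The one place that needs care is part \ref{item:A4}, where the simultaneous diagonalization of $A_3$ and $A_4$ is legitimate only because $A_3$ has simple spectrum --- a consequence of $A_3\neq f\id$ together with $k_1+k_2=2f$ --- and this is what pins down, once and for all, the tangent frame $\{E_1,E_2\}$ in which every subsequent identity is written; the Codazzi computation in part \ref{item:fundamentalEquations} is the most calculation-heavy but is entirely routine.
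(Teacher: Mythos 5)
Your proposal is correct and follows essentially the same route as the paper: trace computations plus the Ricci equation to get $[A_3,A_4]=0$ and diagonalize $A_4$, the connection forms for \ref{item:connections} and \ref{item:gaussianCurvature}, the Gauss and Codazzi equations in the frame $\{E_1,E_2,E_3,E_4\}$ for \ref{item:fundamentalEquations}, and differentiation of $W(f,K)=0$ for \ref{item:WSurface}. The only (immaterial) difference is in \ref{item:A4}, where you invoke the general simultaneous-diagonalization fact for commuting self-adjoint operators with simple spectrum, while the paper carries out the short explicit computation $\beta(k_2-k_1)=0$.
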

	
	\begin{proof}
		Item \ref{item:A4}: First of all, using \eqref{eq:MeanCurvatureNormalized}, we have
		\[
			\trace A_4 = \sum_{i=1}^2 \langle A_4 E_i, E_i \rangle = \sum_{i=1}^2 \langle B ( E_i, E_i), E_4 \rangle = 2 \langle H, E_4  \rangle = 2f \langle E_3, E_4 \rangle,
		\]
		that is
		\[
			\trace A_4 = 0.
		\]
		Next, using the Ricci equation \eqref{eq:RicciEquationInSpaceForms} and the fact that $M$ has flat normal bundle, we obtain
		\[
			\langle [A_3, A_4]X, Y \rangle = 0, \quad \forall X, Y \in C(TM)
		\]
		and this implies that $[A_3, A_4] = 0$, that is $A_3 \circ A_4 = A_4 \circ A_3$. In the following we show that $\{ E_1, E_2 \}$ diagonalizes also $A_4$. Indeed, let $A_4 E_1 = \alpha E_1 + \beta E_2$, where $\alpha, \beta \in C^\infty(M)$. Then, we have 
		\[
			A_3( A_4 E_1) = A_3(\alpha E_1 + \beta E_2) = \alpha A_3 E_1 + \beta A_3 E_2 = \alpha k_1 E_1 + \beta k_2 E_2.
		\]
		On the other hand, 
		\[
			A_3 (A_4 E_1) = A_4 (A_3 E_1) = A_4(k_1 E_1) =  \alpha k_1 E_1 + \beta k_1 E_2.
		\]
		Thus, $\beta(k_2 - k_1) = 0$ must hold. Since $M$ is non pseudo-umbilical, we obtain  $\beta = 0$ on $M$, that is $A_4 E_1 = \alpha E_1$.
		
		Now consider $a, b \in C^\infty(M)$ such that $A_4 E_2 = a E_1 + b E_2$. Using similar computations, we obtain $a(k_2 - k_1) = 0$ and the non pseudo-umbilical condition yields $a = 0$ on $M$, that is $A_4 E_2 = b E_2$. Since $\trace A_4 = 0$, we obtain $A_4 E_2 = - \alpha E_2$, and \eqref{eq:ShapeOperator4} is proved.
		
		Item \ref{item:secondFundamentalForm}: In order to prove \eqref{eq:SecondFundamentalForm}, we combine \eqref{eq:ShapeOperator3} and \eqref{eq:ShapeOperator4} and obtain
		\begin{align*}
			B(E_1, E_1)  = & \langle B(E_1, E_1), E_3\rangle E_3 + \langle B(E_1, E_1), E_4\rangle E_4\\
			 			 = & \langle A_3 E_1, E_1\rangle E_3 + \langle A_4 E_1, E_1\rangle E_4 = k_1 E_3 + \alpha E_4,\\
			B(E_1, E_2)  = & \langle B(E_1, E_2), E_3\rangle E_3 + \langle B(E_1, E_2), E_4\rangle E_4\\
						 = & \langle A_3 E_1, E_2\rangle E_3 + \langle A_4 E_1, E_2\rangle E_4 = 0,\\
			B(E_2, E_2)  = & \langle B(E_2, E_2), E_3\rangle E_3 + \langle B(E_2, E_2), E_4\rangle E_4\\
						 = & \langle A_3 E_2, E_2\rangle E_3 + \langle A_4 E_2, E_2\rangle E_4 = k_2 E_3 - \alpha E_4.
		\end{align*}
		Item \ref{item:connections}: Now, we compute the Levi-Civita connection of $M$. From \eqref{eq:Omega12}, we have
		\begin{align*}
			\nabla_{E_1} E_1 = & \omega_1^1(E_1) E_1 + \omega_1^2(E_1) E_2 = - \left ( a_1\omega^1 + a_2 \omega^2 \right )(E_1)E_2 = -a_1 E_2,\\
			\nabla_{E_1} E_2 = & \omega_2^1(E_1) E_1 + \omega_2^2(E_1) E_2 = \left ( a_1\omega^1 + a_2 \omega^2 \right )(E_1)E_1 = a_1 E_1,\\
			\nabla_{E_2} E_1 = & \omega_1^1(E_2) E_1 + \omega_1^2(E_2) E_2 = - \left ( a_1\omega^1 + a_2 \omega^2 \right )(E_2)E_2 = -a_2 E_2,\\
			\nabla_{E_2} E_2 = & \omega_2^1(E_2) E_1 + \omega_2^2(E_2) E_2 = \left ( a_1\omega^1 + a_2 \omega^2 \right ) (E_2)E_1 = a_2 E_1.		
		\end{align*}
		Next, we compute the connection $\nabla^\perp$ in the normal bundle. Using \eqref{eq:Omega43}, we have
		\begin{align*}
			\nabla_{E_1}^\perp E_3 = & \left \langle \nabla_{E_1}^\perp E_3, E_3 \right \rangle E_3 + \left \langle \nabla_{E_1}^\perp E_3, E_4 \right \rangle E_4\\
			= & \left \langle \nabla^N_{E_1} E_3, E_3 \right \rangle E_3 + \left \langle \nabla^N_{E_1} E_3, E_4 \right \rangle E_4\\
			= & \left \langle \omega_3^3(E_1) E_3 + \omega_3^4(E_1) E_4, E_3 \right \rangle E_3 + \left \langle \omega_3^3(E_1) E_3 + \omega_3^4(E_1) E_4, E_4 \right \rangle E_4\\
			= & \omega_3^4(E_1) E_4 = \left ( b_1 \omega^1 + b_2 \omega^2 \right ) (E_1) E_4 = b_1 E_4.
		\end{align*}
		Following a similar computation, we obtain $\nabla_{E_2}^\perp E_3 = b_2 E_4, \nabla_{E_1}^\perp E_4 = - b_1 E_3,\nabla_{E_2}^\perp E_4 = - b_2 E_3$ and conclude that \eqref{eq:ConnectionFormNormal} holds.
		
		Next, from the flat normal bundle hypothesis, we know that $R^\perp(E_1, E_2) E_3 = 0$. On the other hand, using \eqref{eq:ConnectionFormSurface} and \eqref{eq:ConnectionFormNormal}, we have
		\begin{align*}
			R^\perp(E_1, E_2) E_3 = & \nabla_{E_1}^\perp \nabla_{E_2}^\perp E_3 - \nabla_{E_2}^\perp \nabla_{E_1}^\perp E_3 - \nabla_{[E_1, E_2]}^\perp E_3 \\
			= & \nabla_{E_1}^\perp(b_2 E_4) - \nabla_{E_2}^\perp(b_1 E_4) - \nabla_{\left(\nabla_{E_1} E_2 - \nabla_{E_2} E_1\right)}^\perp E_3 \\
			= & E_1(b_2) E_4 + b_2 \nabla_{E_1}^\perp E_4 - E_2(b_1) E_4 - b_1 \nabla_{E_2}^\perp E_4 - a_1 \nabla_{E_1}^\perp E_3 - a_2 \nabla_{E_2}^\perp E_3 \\
			= & E_1(b_2) E_4 - b_2 b_1 E_3 - E_2(b_1) E_4 + b_1 b_2 E_3 - a_1 b_1 E_4- a_2 b_2 E_4 \\
			= & \left( E_1(b_2) - E_2(b_1) - a_1 b_1- a_2 b_2 \right)E_4,
		\end{align*}
		and this implies $\eqref{eq:FlatNormalBundleEquation}$.
		
		Item \ref{item:gaussianCurvature}: To prove \eqref{eq:GaussEquation}, we recall that $K = \left \langle R(E_1, E_2) E_2, E_1 \right \rangle$ and taking into account \eqref{eq:ConnectionFormSurface}, we compute
		\begin{align*}
			R(E_1, E_2) E_2 = & \nabla_{E_1} \nabla_{E_2} E_2 - \nabla_{E_2}\nabla_{E_1} E_2 - \nabla_{[E_1, E_2]} E_2\\
			= & \nabla_{E_1}(a_2 E_1) - \nabla_{E_2}(a_1 E_1) - \nabla_{\left(\nabla_{E_1} E_2 - \nabla_{E_2} E_1\right)} E_2\\
			= & E_1(a_2) E_1 + a_2 \nabla_{E_1} E_1 - E_2(a_1) E_1 - a_1 \nabla_{E_2} E_1 - a_1 \nabla_{E_1} E_2 - a_2 \nabla_{E_2} E_2\\
			= & \left ( E_1(a_2) - E_2(a_1) - a_1^2 - a_2^2 \right ) E_1,
		\end{align*}
		which implies \eqref{eq:GaussEquation}.
		
		Item \ref{item:fundamentalEquations}: In order to prove $\eqref{eq:GaussianCurvatureFromEquation}$, we study the Gauss equation \eqref{eq:GaussEquationInSpaceForms}. Using \eqref{eq:SecondFundamentalForm}, we have
		\begin{align*}
			\epsilon = & \langle R(E_1, E_2)E_2, E_1 \rangle + \langle B(E_2, E_1), B(E_1, E_2) \rangle - \langle B(E_1, E_1), B(E_2, E_2) \rangle \\
			 = & \langle R(E_1,E_2)E_2,E_1 \rangle - k_1 k_2 + \alpha^2,
		\end{align*}
		which is equivalent to \eqref{eq:GaussianCurvatureFromEquation}.
		
		Studying the Codazzi equation, we deduce \eqref{eq:E1kappa2}, \eqref{eq:E2kappa1}, \eqref{eq:E1alpha} and \eqref{eq:E2alpha}.
	
		Choosing $X = E_1$ and $Y = Z = E_2$ in \eqref{eq:CodazziEquationInSpaceForms} and using \eqref{eq:SecondFundamentalForm}, \eqref{eq:ConnectionFormSurface}, \eqref{eq:ConnectionFormNormal}, we get
		\begin{align*}
			& \left ( \nabla ^\perp _{E_1} B \right ) (E_2, E_2) = \left ( \nabla ^\perp _{E_2} B \right ) (E_1, E_2) \\
			\Leftrightarrow & \nabla ^\perp _{E_1} B(E_2, E_2) - 2 B \left ( \nabla _{E_1} E_2, E_2 \right ) = \nabla ^\perp _{E_2} B(E_1, E_2) - B \left ( \nabla _{E_2} E_1, E_2 \right ) - B \left ( E_1, \nabla _{E_2} E_2 \right ) \\
			\Leftrightarrow & \nabla ^\perp _{E_1} (k_2 E_3 - \alpha E_4) - 2 B(a_1 E_1, E_2) = - B(-a_2 E_2, E_2) - B(E_1, a_2 E_1) \\
			\Leftrightarrow & E_1 (k_2) E_3 + k_2 \nabla ^\perp _{E_1} E_3 - E_1 (\alpha) E_4 - \alpha \nabla ^\perp _{E_1} E_4 = a_2 (k_2 E_3 - \alpha E_4) - a_2 (k_1 E_3 + \alpha E_4) \\
			\Leftrightarrow & E_1 (k_2) E_3 + b_1 k_2 E_4 - E_1 (\alpha) E_4 + \alpha b_1 E_3 = a_2 (k_2 - k_1) E_3 - 2 \alpha a_2 E_4 \\
			\Leftrightarrow & \bigl ( E_1 (k_2) - a_2 (k_2 - k_1) + \alpha b_1 \bigr ) E_3 - \bigl ( E_1 (\alpha) - 2 \alpha a_2 - b_1 k_2 \bigr ) E_4 = 0
		\end{align*}
		and thus \eqref{eq:E1kappa2} and \eqref{eq:E1alpha} hold.
		
		Choosing $X = Z = E_1$ and $Y = E_2$ in \eqref{eq:CodazziEquationInSpaceForms} and using \eqref{eq:SecondFundamentalForm}, \eqref{eq:ConnectionFormSurface}, \eqref{eq:ConnectionFormNormal}, we obtain
		\begin{align*}
			& \left ( \nabla ^\perp _{E_1} B \right ) (E_2, E_1) = \left ( \nabla ^\perp _{E_2} B \right ) (E_1, E_1) \\
			\Leftrightarrow & \nabla ^\perp _{E_1} B(E_2, E_1) - B \left ( \nabla _{E_1} E_2, E_1 \right ) - B \left ( E_2, \nabla _{E_1} E_1 \right ) = \nabla ^\perp _{E_2} B(E_1, E_1) - 2 B \left ( \nabla _{E_2} E_1, E_1 \right ) \\
			\Leftrightarrow & - B(a_1 E_1, E_1) - B(E_2, - a_1 E_2) = \nabla ^\perp _{E_2} (k_1 E_3 + \alpha E_4) - 2 B(-a_2 E_2, E_1) \\
			\Leftrightarrow & - a_1 (k_1 E_3 + \alpha E_4) + a_1 (k_2 E_3 - \alpha E_4) = E_2 (k_1) E_3 + k_1 \nabla ^\perp _{E_2} E_3 + E_2 (\alpha) E_4 + \alpha \nabla ^\perp _{E_2} E_4 \\
			\Leftrightarrow & a_1 (k_2 - k_1) E_3 - 2 \alpha a_1 E_4 = E_2 (k_1) E_3 + k_1 b_2 E_4 + E_2 (\alpha) E_4 - \alpha b_2 E_3 \\
			\Leftrightarrow & \bigl ( E_2 (k_1) - \alpha b_2 - a_1 (k_2 - k_1) \bigr ) E_3 + \bigl ( E_2 (\alpha) + k_1 b_2 + 2 \alpha a_1 \bigr ) E_4 = 0,
		\end{align*}
		which implies \eqref{eq:E2kappa1} and \eqref{eq:E2alpha}.
		
		Item \ref{item:WSurface}: Finally, since $M$ is a W-surface, there exists $W: \mathbb{R}^2 \to \mathbb{R}$, $W = W \left (x^1, x^2 \right )$, which satisfies $\left( \partial W / \partial x^1 \right)^2 + \left( \partial W / \partial x^2 \right)^2 > 0$ on $M$ and $W(f, K) = 0$. Thus
		\[ \left \lbrace 
		\begin{array}{l}
			E_1(W(f, K)) = 0\\[5pt]
			E_2(W(f, K)) = 0
		\end{array} \right.
		\Leftrightarrow \left \lbrace 
		\begin{array}{l}
			E_1(f) \dfrac{\partial W}{\partial x^1}(f, K) + E_1(K) \dfrac{\partial W}{\partial x^2}(f, K) = 0\\[15pt]
			E_2(f) \dfrac{\partial W}{\partial x^1}(f, K) + E_2(K) \dfrac{\partial W}{\partial x^2}(f, K) = 0
		\end{array} \right. .
		\]
		We obtain a system of linear equations in the variables $\left ( \partial W / \partial x^1 \right ) (f, K)$ and $\left ( \partial W / \partial x^2 \right ) (f, K)$. Since $\left( \partial W / \partial x^1 \right)^2 + \left( \partial W / \partial x^2 \right)^2 \neq 0$ at any point of $M$, the system admits a non-trivial solution and this leads to $E_1 (f) E_2 (K) = E_2 (f) E_1 (K)$, which is equivalent to \eqref{eq:WSurfaceEquaition}.
	\end{proof}
	
	Under the additional hypothesis of biconservativity, the W-surfaces with flat normal bundle have new properties.
	
	\begin{proposition} \label{th:propertiesBiconservativeSurface}
		Let $\varphi: M^2 \to N^4(\epsilon)$ be a biconservative W-surface with flat normal bundle. Assume that $H \neq 0$, $\grad f \neq 0$ and $A_3 \neq f \id$ at any point. Then, the following hold
		\begin{align}
			& (k_1 + f) E_1(f) = -f \alpha b_1, \label{eq:E1MeanCurvature}\\
			&(k_2 + f) E_2(f) = f \alpha b_2, \label{eq:E2MeanCurvature} \\
			&E_1(k_1) = \frac {k_2 - k_1} {2f} \left(E_1(f) - 2 f a_2 \right), \label{eq:E1kappa1} \\
			&E_2(k_2) = -\frac {k_2 - k_1} {2f} \left( E_2(f) + 2 f a_1 \right), \label{eq:E2kappa2} \\
			&E_1(K) = 6f E_1(f) - 4a_2 \left ( f^2 - K + \epsilon \right ), \label{eq:E1GaussianCurvature} \\
			&E_2(K) = 6f E_2(f) + 4a_1 \left ( f^2 - K + \epsilon \right ). \label{eq:E2GaussianCurvature}
		\end{align}
	\end{proposition}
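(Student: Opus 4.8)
The plan is to derive all six identities from the single biconservativity equation \eqref{eq:biharmonicEquationTangentPart} (with $m=2$), the fundamental equations \ref{item:fundamentalEquations} of Proposition \ref{th:propertiesOurSurface}, and the trivial remark that $k_1+k_2=\trace A_3=2f$ (because $\langle B(E_1,E_1)+B(E_2,E_2),E_3\rangle=2\langle H,E_3\rangle=2f$). In particular the W-surface hypothesis will not be needed here. First I would translate the biconservative equation into frame components: since $H=fE_3$, formula \eqref{eq:ConnectionFormNormal} gives $\nabla^\perp_{E_1}H=E_1(f)E_3+fb_1E_4$ and $\nabla^\perp_{E_2}H=E_2(f)E_3+fb_2E_4$, so by \eqref{eq:ShapeOperator3}--\eqref{eq:ShapeOperator4}
\[
\trace A_{\nabla^\perp_{(\cdot)}H}(\cdot)=A_{\nabla^\perp_{E_1}H}E_1+A_{\nabla^\perp_{E_2}H}E_2=\bigl(k_1E_1(f)+f\alpha b_1\bigr)E_1+\bigl(k_2E_2(f)-f\alpha b_2\bigr)E_2 .
\]
Since $\tfrac m2\grad|H|^2=2f\grad f=2f\bigl(E_1(f)E_1+E_2(f)E_2\bigr)$, substituting into \eqref{eq:biharmonicEquationTangentPart} and reading off the $E_1$- and $E_2$-components gives precisely \eqref{eq:E1MeanCurvature} and \eqref{eq:E2MeanCurvature}.

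Next I would obtain \eqref{eq:E1kappa1} by differentiating $k_1+k_2=2f$ along $E_1$, i.e. $E_1(k_1)=2E_1(f)-E_1(k_2)$, then substituting $E_1(k_2)$ from \eqref{eq:E1kappa2} and eliminating $\alpha b_1$ via \eqref{eq:E1MeanCurvature} (so that $\alpha b_1=-\tfrac{k_1+f}{f}E_1(f)$). The $a_2$-terms then already match $-\tfrac{k_2-k_1}{2f}\cdot 2fa_2$, and for the remaining $E_1(f)$-terms one uses $f-k_1=\tfrac{k_2-k_1}{2}$ to rewrite $\tfrac{f-k_1}{f}E_1(f)$ as $\tfrac{k_2-k_1}{2f}E_1(f)$, yielding \eqref{eq:E1kappa1}. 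Equation \eqref{eq:E2kappa2} is the mirror computation, using \eqref{eq:E2kappa1}, \eqref{eq:E2MeanCurvature} and $f-k_2=-\tfrac{k_2-k_1}{2}$.

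Finally, for \eqref{eq:E1GaussianCurvature} I would differentiate the Gauss relation $K=\epsilon+k_1k_2-\alpha^2$ from \eqref{eq:GaussianCurvatureFromEquation} along $E_1$, getting $E_1(K)=k_2E_1(k_1)+k_1E_1(k_2)-2\alpha E_1(\alpha)$, and then plug in $E_1(k_1)$ from the just-proved \eqref{eq:E1kappa1}, $E_1(k_2)$ from \eqref{eq:E1kappa2}, $E_1(\alpha)$ from \eqref{eq:E1alpha}, and $\alpha b_1=-\tfrac{k_1+f}{f}E_1(f)$ from \eqref{eq:E1MeanCurvature}. Separating the result into $E_1(f)$-terms and $a_2$-terms: the $a_2$-coefficient collapses to $-4a_2(f^2-K+\epsilon)$ once one uses $(k_2-k_1)^2=(k_1+k_2)^2-4k_1k_2=4f^2-4k_1k_2$ together with $k_1k_2=K-\epsilon+\alpha^2$; the $E_1(f)$-coefficient, put over $2f$, has bracket equal to $12f^2$ after substituting $f=(k_1+k_2)/2$, hence equals $6f$. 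This is \eqref{eq:E1GaussianCurvature}, and \eqref{eq:E2GaussianCurvature} follows from the symmetric computation based on \eqref{eq:E2kappa1}, \eqref{eq:E2kappa2}, \eqref{eq:E2alpha}, \eqref{eq:E2MeanCurvature}.

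I expect the only real obstacle to be the bookkeeping in the last step: the cancellations there depend on using $k_1+k_2=2f$, $k_1k_2=K-\epsilon+\alpha^2$ and the biconservativity relations for $\alpha b_1,\alpha b_2$ all at once, so the safe procedure is to reduce every expression to the variables $f,K,\alpha$ (and $E_i(f),a_i$) before asserting that the $a_i$-part becomes $\mp4a_i(f^2-K+\epsilon)$ and the $E_i(f)$-part becomes $6fE_i(f)$. Steps one and two are short and essentially forced by the component decomposition of \eqref{eq:biharmonicEquationTangentPart} and the identity $k_1+k_2=2f$.
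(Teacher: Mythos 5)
Your proposal is correct and follows essentially the same route as the paper: the first two identities come from reading off the frame components of the biconservativity equation, the next two from differentiating $k_1+k_2=2f$ and eliminating $\alpha b_1$, $\alpha b_2$ via \eqref{eq:E1MeanCurvature}--\eqref{eq:E2MeanCurvature}, and the last two from differentiating \eqref{eq:GaussianCurvatureFromEquation} and using the Codazzi relations together with $(k_2-k_1)^2=4f^2-4k_1k_2$; your bookkeeping (including the $12f^2$ cancellation giving the coefficient $6f$) checks out, the only cosmetic difference being that the paper writes $E_1(k_1)=E_1(2f-k_2)$ instead of invoking the already-proved \eqref{eq:E1kappa1}.
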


	\begin{proof}
		We begin by expressing \eqref{eq:biharmonicEquationTangentPart} in the frame $\{E_i\}_{i=1}^4$. Using \eqref{eq:MeanCurvatureNormalized}, \eqref{eq:ShapeOperator3} and Proposition \ref{th:propertiesOurSurface} we obtain
		\begin{align*}
			2 \trace A_{\nabla_{(\cdot)}^\perp H} (\cdot) = & 2 \sum_{i=1}^2 A_{\nabla_{E_i}^\perp H} E_i = 2 \sum_{i=1}^2 A_{\nabla_{E_i}^\perp (f E_3)} E_i = 2 \sum_{i=1}^2 E_i(f) A_3 E_i + f A_{\nabla_{E_i}^\perp E_3} E_i\\
			= & 2 \bigl ( E_1(f) A_3 E_1 + E_2(f) A_3 E_2 + f b_1 A_4 E_1 + f b_2 A_4 E_2 \bigr )\\
			= & 2 \bigl ( E_1(f) k_1 E_1 + E_2(f) k_2 E_2 + f b_1 \alpha E_1 - f b_2 \alpha E_2 \bigr )\\
			= & 2 \bigl ( E_1(f) k_1 + f b_1 \alpha \bigr ) E_1 + 2 \bigl ( E_2(f) k_2 - f b_2 \alpha \bigr ) E_2
		\end{align*}
		and
		\[
		\grad |H|^2 = \grad f^2 = 2f \grad f = 2f \bigl ( E_1(f) E_1 + E_2(f) E_2 \bigr ). 
		\]
		Combining these expressions, we deduce that \eqref{eq:biharmonicEquationTangentPart} is equivalent to
		\[
		2 \bigl ( (k_1 + f) E_1(f)  + f \alpha  b_1 \bigr ) E_1 + 2 \bigl ( (k_2 + f) E_2(f) - f \alpha b_2 \bigr ) E_2 = 0.
		\]
		Thus, the biconservativity of $M$ is equivalent to \eqref{eq:E1MeanCurvature} and \eqref{eq:E2MeanCurvature}.
		
		Next, we compute $E_1(k_1)$ and $E_2(k_2)$. Differentiating $2f = k_1 + k_2$ along $E_1$ and substituting \eqref{eq:E1kappa2} and \eqref{eq:E1MeanCurvature}, we have
		\begin{align*} 
			& E_1(k_1) = 2 E_1(f) - E_1(k_2) = 2 E_1(f) - a_2(k_2 - k_1) + \alpha b_1 \\
			\Leftrightarrow & 2f E_1(k_1) = 4f E_1(f) - 2 f a_2(k_2 - k_1) + 2 f \alpha b_1 \\
			\Leftrightarrow & 2f E_1(k_1) = 4f E_1(f) - 2 f a_2(k_2 - k_1) - 2(k_1 + f) E_1(f) \\
			\Leftrightarrow & 2f E_1(k_1) = (4f - 2k_1 - 2f) E_1(f) - 2 f a_2(k_2 - k_1) \\
			\Leftrightarrow & 2f E_1(k_1) = (k_2 - k_1) E_1(f) - 2 f a_2(k_2 - k_1).   
		\end{align*}
		Thus, taking into account that $f \neq 0$ at any point of $M$, we obtain \eqref{eq:E1kappa1}.
		
		Similarly, differentiating $2f = k_1 + k_2$ in the direction of $E_2$ and using \eqref{eq:E2kappa1} and \eqref{eq:E2MeanCurvature}, we obtain
		\begin{align*} 
			& E_2(k_2) = 2 E_2(f) - E_2(k_1) = 2 E_2(f) - a_1(k_2 - k_1) - \alpha b_2 \\
			\Leftrightarrow & 2f E_2(k_2) = 4f E_2(f) - 2 f a_1(k_2 - k_1) - 2 f \alpha b_2 \\
			\Leftrightarrow & 2f E_2(k_2) = 4f E_2(f) - 2 f a_1(k_2 - k_1) - 2 (k_2 + f) E_2(f) \\
			\Leftrightarrow & 2f E_2(k_2) = (4f - 2k_2 - 2f) E_2(f) - 2 f a_1(k_2 - k_1) \\
			\Leftrightarrow & 2f E_2(k_2) = -(k_2 - k_1) E_2(f) - 2 f a_1(k_2 - k_1).   
		\end{align*}
		Thus, since $f \neq 0$ at any point of $M$, we obtain \eqref{eq:E2kappa2}.
		
		Now, using \eqref{eq:GaussianCurvatureFromEquation}, we compute the derivatives of $K$ in the directions $E_1$ and $E_2$. Using \eqref{eq:E1kappa2}, \eqref{eq:E1alpha} and \eqref{eq:E1MeanCurvature}, we have
		\begin{align*}
			E_1(K) = & E_1(\epsilon + k_1 k_2 -\alpha^2) = E_1(k_1) k_2 + k_1 E_1(k_2) - 2 \alpha E_1(\alpha) \\
			= & k_2 E_1(2f - k_2) + k_1  E_1(k_2) - 2 \alpha E_1(\alpha) = 2 k_2 E_1(f) + (k_1 - k_2) E_1(k_2) - 2 \alpha E_1(\alpha) \\
			= & 2 k_2 E_1(f) - a_2(k_2 - k_1)^2 + \alpha b_1 (k_2 - k_1)  - 4 \alpha^2 a_2 - 2 \alpha b_1 k_2 \\
			= & 2 k_2 E_1(f) - a_2\left( (k_2 - k_1)^2 + 4 \alpha^2 \right) - \alpha b_1 (k_1 + k_2) \\
			= & 2 k_2 E_1(f) - a_2\left( (k_2 + k_1)^2 - 4k_1 k_2 + 4 \alpha^2 \right) - 2 \alpha b_1 f \\
			= & 2 k_2 E_1 (f) + 2 (k_1 + f) E_1 (f) - 4 a_2 \left ( f^2 - k_1 k_2 + \alpha^2 \right ).
		\end{align*}
		Using again \eqref{eq:GaussianCurvatureFromEquation}, we obtain \eqref{eq:E1GaussianCurvature}.
		
		Following similar computation and using \eqref{eq:E2kappa1}, \eqref{eq:E2alpha} and \eqref{eq:E2MeanCurvature}, we have
		\begin{align*}
			E_2(K) = & E_2(\epsilon + k_1 k_2 -\alpha^2) = E_2(k_1) k_2 + k_1 E_2(k_2) - 2 \alpha E_2(\alpha) \\
			= & k_2 E_2(k_1) + k_1  E_2(2f - k_1) - 2 \alpha E_2(\alpha) = (k_2 - k_1)E_2(k_1) + 2 k_1 E_2(f)  - 2 \alpha E_2(\alpha) \\
			= & a_1(k_2 - k_1)^2 + \alpha b_2 (k_2 - k_1) + 2 k_1 E_2(f) + 4 	\alpha^2 a_1 + 2 \alpha b_2 k_1 \\
			= & a_1 \left( (k_2 - k_1)^2  + 4 \alpha^2 \right) + \alpha b_2 (k_2 + k_1) + 2 k_1 E_2(f)\\
			= & a_1 \left( (k_2 + k_1)^2 - 4k_1k_2  + 4 \alpha^2 \right) + 2 \alpha b_2 f + 2 k_1 E_2(f)\\
			= & 2 k_1 E_2(f) + 2 (k_2 + f)E_2(f) + 4 a_1 \left( f^2 - k_1k_2  +  \alpha^2 \right),
		\end{align*}
		and, using \eqref{eq:GaussianCurvatureFromEquation}, we conclude that \eqref{eq:E2GaussianCurvature} holds.
	\end{proof}
	
	Recall that the pseudo-umbilical biconservative surfaces in $N^4 (\epsilon)$ are CMC. For the simplicity of the exposition, by non-CMC biconservative surfaces we understand biconservative surfaces such that $H \neq 0$, $\grad f \neq 0$ and $A_3 \neq f \id$ at any point.
	
	Under a small technical assumption, we see that the surfaces we are studying have a key property.
	
	\begin{lemma} \label{th:constraintNormalConnection}
		Let $\varphi: M^2 \to N^4(\epsilon)$ be a non-CMC biconservative W-surface with flat normal bundle. Assume that $3 f^2 + K - \epsilon \neq 0$ at any point. Then
		\begin{equation*}
			\langle \nabla_{E_1}^\perp E_3, E_4 \rangle \langle \nabla_{E_2}^\perp E_3, E_4 \rangle = 0 \text{ on } M.	
		\end{equation*}
		Moreover, on $M$, we have 
		\begin{equation} \label{eq:VanishingBiconservativeWSurfaceFinal}
			\left \langle \grad f, [E_1, E_2] \right \rangle = 0,
		\end{equation} 
	
		\begin{equation} \label{eq:VanishingOfLieBracketsOfFandK}
			[E_1, E_2] (f) = [E_1, E_2] (K) = 0, \quad \text{on } M,
		\end{equation}
	
		\begin{equation} \label{eq:VanishingE1a1PlusE2a2}
			E_1(a_1) + E_2(a_2) = 0
		\end{equation}
		and
		\begin{equation} \label{eq:DifferenceBracketFinal}
			\alpha \left ( b_2E_1(f) - b_1E_2(f) \right ) = 0.
		\end{equation}
	\end{lemma}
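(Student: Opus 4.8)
The plan is to handle this in three stages, the first two short and the third laborious. Stage one extracts the content of the Weingarten hypothesis: substituting \eqref{eq:E1GaussianCurvature} and \eqref{eq:E2GaussianCurvature} into \eqref{eq:WSurfaceEquaition}, written as $E_1(f)E_2(K)=E_2(f)E_1(K)$, the cross terms $6fE_1(f)E_2(f)$ cancel, leaving $(f^2-K+\epsilon)\bigl(a_1E_1(f)+a_2E_2(f)\bigr)=0$. By \eqref{eq:GaussianCurvatureFromEquation} together with $k_1+k_2=2f$ one has $f^2-K+\epsilon=\tfrac14(k_1-k_2)^2+\alpha^2$, which is strictly positive because $A_3\neq f\,\id$ forces $k_1\neq k_2$ at every point; hence
\begin{equation*}
a_1E_1(f)+a_2E_2(f)=0\qquad\text{on }M. \tag{$\ast$}
\end{equation*}
This step does not use the hypothesis $3f^2+K-\epsilon\neq0$.

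Stage two is then immediate. By \eqref{eq:ConnectionFormSurface}, $[E_1,E_2]=\nabla_{E_1}E_2-\nabla_{E_2}E_1=a_1E_1+a_2E_2$, so $\langle\grad f,[E_1,E_2]\rangle=a_1E_1(f)+a_2E_2(f)$, and $(\ast)$ is precisely \eqref{eq:VanishingBiconservativeWSurfaceFinal}; moreover $[E_1,E_2](f)=a_1E_1(f)+a_2E_2(f)=0$. Applying $[E_1,E_2]=a_1E_1+a_2E_2$ to $K$ and invoking \eqref{eq:E1GaussianCurvature}–\eqref{eq:E2GaussianCurvature}, the two $4(f^2-K+\epsilon)$-terms cancel and what is left equals $6f\bigl(a_1E_1(f)+a_2E_2(f)\bigr)=0$, so $[E_1,E_2](K)=0$ as well; together these give \eqref{eq:VanishingOfLieBracketsOfFandK}.

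For stage three I would use the symmetry of the Hessian, i.e. the identity $[E_1,E_2]\phi=a_1E_1(\phi)+a_2E_2(\phi)$, for $\phi=\alpha$ and for $\phi=k_2$. Expanding each left-hand side as $E_1(E_2\phi)-E_2(E_1\phi)$ by means of \eqref{eq:ConnectionFormSurface}, \eqref{eq:ConnectionFormNormal} and the first-order equations \eqref{eq:E1MeanCurvature}–\eqref{eq:E2alpha}, using \eqref{eq:FlatNormalBundleEquation} to rewrite $E_1(b_2)-E_2(b_1)$, eliminating the second-order term $E_1(E_2f)$ through the $E_1$-derivative of \eqref{eq:E2MeanCurvature}, and simplifying with $k_1+k_2=2f$, \eqref{eq:GaussianCurvatureFromEquation} and $(\ast)$, each choice of $\phi$ produces a scalar relation in which $E_1(a_1)+E_2(a_2)$ and $E_1(b_2)+E_2(b_1)$ enter linearly. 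Solving the resulting pair, one relation collapses to $E_1(a_1)+E_2(a_2)=0$, which is \eqref{eq:VanishingE1a1PlusE2a2}; the other, after clearing denominators, carries an overall factor that — again by \eqref{eq:GaussianCurvatureFromEquation} and $k_1+k_2=2f$ — is a nonzero multiple of $3f^2+K-\epsilon$, so dividing it out (the one place the technical hypothesis is used) yields $\langle\nabla^\perp_{E_1}E_3,E_4\rangle\langle\nabla^\perp_{E_2}E_3,E_4\rangle=b_1b_2=0$. Finally, multiplying \eqref{eq:E1MeanCurvature} by $E_2(f)$, multiplying \eqref{eq:E2MeanCurvature} by $E_1(f)$ and adding gives $f\alpha\bigl(b_2E_1(f)-b_1E_2(f)\bigr)=4fE_1(f)E_2(f)$, so \eqref{eq:DifferenceBracketFinal} amounts to $E_1(f)E_2(f)=0$; where $\alpha=0$ this is immediate from \eqref{eq:E1MeanCurvature}–\eqref{eq:E2MeanCurvature} (since $k_1+f$ and $k_2+f$ cannot vanish simultaneously, as $k_1\neq k_2$), and where $\alpha\neq0$ it comes out of the same second-order computation.

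The hard part will be stage three: both commutator expansions are long, the second-order derivative $E_1(E_2f)$ must be routed through the derivative of \eqref{eq:E2MeanCurvature}, and one has to push the algebra far enough for the factor $3f^2+K-\epsilon$ to appear; once it does, all three identities drop out. Everything preceding it is routine.
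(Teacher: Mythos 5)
Your stages one and two are correct and coincide with the paper's argument: the Weingarten condition \eqref{eq:WSurfaceEquaition} combined with \eqref{eq:E1GaussianCurvature}--\eqref{eq:E2GaussianCurvature} and the positivity of $f^2-K+\epsilon$ gives $a_1E_1(f)+a_2E_2(f)=0$, hence \eqref{eq:VanishingBiconservativeWSurfaceFinal} and \eqref{eq:VanishingOfLieBracketsOfFandK}.

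Stage three, however, has a structural gap. The relations you propose to use --- the commutator identities for $\phi=\alpha$ and $\phi=k_2$, the flat-normal-bundle identity \eqref{eq:FlatNormalBundleEquation}, and the $E_1$-derivative of \eqref{eq:E2MeanCurvature} --- form exactly four linear equations in exactly four second-order unknowns, namely $P=E_1(a_1)+E_2(a_2)$, $E_1(b_2)$, $E_2(b_1)$ and $E_1(E_2 f)$ (note that $E_2(E_1 f)$ never enters these four relations). Writing them out, the coefficient matrix of this square system has determinant $8f^3\bigl[(k_2-k_1)(k_2+f)+2\alpha^2\bigr]$, which is not identically zero. A generically invertible square system only determines the unknowns in terms of the first-order data; it cannot impose any constraint among $f,k_1,k_2,\alpha,a_i,b_i,E_1(f),E_2(f)$ themselves. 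Consequently neither the first-order relation $b_1b_2\left(3f^2+K-\epsilon\right)=0$ (the main assertion of the lemma) nor, without further input, \eqref{eq:VanishingE1a1PlusE2a2} can ``collapse out'' of solving your pair of equations, and your claim that a factor $3f^2+K-\epsilon$ appears after clearing denominators will not materialize. The missing ingredient is the remaining integrability condition: the commutator of $k_1$ (equivalently, of $K$, or the $E_2$-derivative of \eqref{eq:E1MeanCurvature}). The paper first uses the $K$-commutator, in which the $b$-derivatives are absent and the second derivatives of $f$ combine into $[E_1,E_2](f)=0$, to force $E_1(a_1)+E_2(a_2)=0$; it then subtracts the $k_1$- and $k_2$-commutator identities, where $E_1(E_2f)-E_2(E_1f)$ vanishes and $E_1(b_2)-E_2(b_1)$ is replaced via \eqref{eq:FlatNormalBundleEquation}, obtaining the genuinely first-order identity $4f^2b_1b_2+\alpha\bigl(b_2E_1(f)-b_1E_2(f)\bigr)=0$; multiplying by $(k_1+f)(k_2+f)$ and using \eqref{eq:E1MeanCurvature}--\eqref{eq:E2MeanCurvature} produces the factor $3f^2+K-\epsilon$ and hence $b_1b_2=0$, after which \eqref{eq:DifferenceBracketFinal} follows. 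Two smaller points: your elimination of $E_1(E_2f)$ divides by $k_2+f$, whose non-vanishing is not guaranteed (the paper avoids all such divisions), and your final reduction of \eqref{eq:DifferenceBracketFinal} to $E_1(f)E_2(f)=0$ via the correct identity $f\alpha\bigl(b_2E_1(f)-b_1E_2(f)\bigr)=4fE_1(f)E_2(f)$ still relies, on the set where $\alpha\neq0$, on the part of the computation that the missing commutator was needed for.
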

	
	\begin{proof}
		In order to prove \eqref{eq:VanishingBiconservativeWSurfaceFinal}, we use the condition \eqref{eq:WSurfaceEquaition}, which is equivalent to $E_1 (f) E_2 (K) = E_2 (f) E_1 (K)$, together with \eqref{eq:E1GaussianCurvature} and \eqref{eq:E2GaussianCurvature}. We obtain
		\[
		E_1(f) \Bigl ( 6f E_2(f) + 4a_1 \left( f^2 - K + \epsilon \right) \Bigr ) = E_2(f) \Bigl ( 6f E_1(f) - 4a_2\left( f^2 - K + \epsilon \right) \Bigr ),  
		\]
		which yields
		\begin{equation} \label{eq:VanishingBiconservativeWSurface}
			\left ( f^2 - K + \epsilon \right ) \bigl ( a_1 E_1(f) + a_2 E_2(f) \bigr ) = 0.
		\end{equation}
		Further, from \eqref{eq:GaussianCurvatureFromEquation} and the fact that $k_1 \neq k_2$ at any point of $M$, we obtain that
		\[
		f^2 - K + \epsilon = \left(\dfrac{k_1 + k_2}{2}\right)^2 - k_1k_2 + \alpha^2 = \left(\dfrac{k_1 - k_2}{2}\right)^2 + \alpha^2 > 0,
		\]
		that is 
		\begin{equation} \label{eq:IneqFandK}
			f^2 - K + \epsilon \neq 0, \quad \text{at any point of } M.
		\end{equation}
		Thus, \eqref{eq:VanishingBiconservativeWSurface} is equivalent to \eqref{eq:VanishingBiconservativeWSurfaceFinal}. 
		
		Now, we prove that $[E_1, E_2] (K)$ and $[E_1, E_2] (f)$ vanish on $M$. Using the fact that $\nabla$ is torsion-free, from \eqref{eq:ConnectionFormSurface}, \eqref{eq:E1GaussianCurvature}, \eqref{eq:E2GaussianCurvature} and \eqref{eq:VanishingBiconservativeWSurfaceFinal}, we have
		\begin{equation*} \label{eq:LieBracketFFromTorsion}
			[E_1, E_2](f) = \left ( \nabla_{E_1} E_2 - \nabla_{E_2} E_1 \right )(f) = a_1E_1(f) + a_2 E_2(f) = 0.
		\end{equation*} 
		Similarly,
		\begin{align*}
			[E_1, E_2](K) = & \left (\nabla_{E_1} E_2 - \nabla_{E_2} E_1 \right )(K) = a_1E_1(K) + a_2 E_2(K) \\ %\label{eq:LieBracketKFromTorsion}
			= & 6 a_1 f E_1(f) - 4a_1a_2 \left ( f^2 - K + \epsilon \right ) + 6 a_2 f E_2(f) + 4 a_1 a_2 \left ( f^2 - K + \epsilon \right ) \notag \\
			= & 6f \bigl ( a_1 E_1(f) + a_2 E_2(f) \bigr ). \notag \\
			= & 0.
		\end{align*}
		
		In the following, we compute $[E_1, E_2] (K)$ using the definition of the Lie bracket. Differentiating \eqref{eq:E2GaussianCurvature} along $E_1$, we have
		\begin{align*}
			E_1(E_2(K)) = & E_1 \Bigl ( 6f E_2(f) + 4a_1 \left ( f^2 - K + \epsilon \right ) \Bigr ) \\
			= & 6 E_1(f) E_2(f) + 6f E_1(E_2(f)) + 4 E_1(a_1) \left ( f^2 - K + \epsilon \right ) + 4a_1 \bigl ( 2f E_1(f) - E_1(K) \bigr ),
		\end{align*}
		which, using \eqref{eq:E1GaussianCurvature}, becomes
		\begin{align}
			E_1(E_2(K)) = & 6 E_1(f) E_2(f) + 6f E_1(E_2(f)) + 4 \left ( f^2 - K + \epsilon \right ) E_1(a_1) \label{eq:E1E2GaussianCurvature}\\
			& - 16 f a_1 E_1(f) + 16 a_1 a_2 \left ( f^2 - K + \epsilon \right ). \notag
		\end{align}
		Now, differentiating \eqref{eq:E1GaussianCurvature} along $E_2$, we have
		\begin{align*}
			E_2(E_1(K)) = & E_2 \Bigl ( 6f E_1(f) - 4a_2 \left ( f^2 - K + \epsilon \right ) \Bigr ) \\
			= & 6 E_2(f) E_1(f) + 6f E_2(E_1(f)) - 4 E_2(a_2) \left ( f^2 - K + \epsilon \right ) - 4a_2 \bigl ( 2f E_2(f) - E_2(K) \bigr ),
		\end{align*}
		and, using \eqref{eq:E2GaussianCurvature}, we obtain
		\begin{align}
			E_2(E_1(K)) = & 6 E_1(f) E_2(f) + 6f E_2(E_1(f)) - 4 \left ( f^2 - K + \epsilon \right ) E_2(a_2) \label{eq:E2E1GaussianCurvature}\\
			& + 16 f a_2 E_2(f) + 16 a_1 a_2 \left ( f^2 - K + \epsilon \right ). \notag
		\end{align}
		Combining \eqref{eq:E1E2GaussianCurvature} and \eqref{eq:E2E1GaussianCurvature} and using \eqref{eq:VanishingBiconservativeWSurfaceFinal}, we find that
		\begin{equation} \label{eq:BracketGaussianCurvature}
			[E_1, E_2](K) = 6 f [E_1, E_2](f) + 4 \left ( f^2 - K + \epsilon \right )(E_1(a_1) + E_2(a_2)).
		\end{equation}
		Using \eqref{eq:VanishingOfLieBracketsOfFandK}, \eqref{eq:IneqFandK} and \eqref{eq:BracketGaussianCurvature} we get \eqref{eq:VanishingE1a1PlusE2a2}.
		
		In the following we compute $[E_1, E_2] (k_1)$ and $[E_1, E_2] (k_2)$ in two ways. First we compute them using the fact that $\nabla$ is torsion-free.
		
		From \eqref{eq:ConnectionFormSurface}, \eqref{eq:E2kappa1} and \eqref{eq:E1kappa1}, we have
		\begin{align}
			[E_1, E_2](k_1) = & \left ( \nabla_{E_1} E_2 - \nabla_{E_2}E_1 \right )(k_1) = a_1E_1(k_1) + a_2 E_2(k_1) \label{eq:BracketConnectionkappa1}\\
			= & \dfrac{a_1(k_2 - k_1)}{2f}(E_1(f) - 2f a_2) + a_2\left(a_1(k_2 - k_1) + \alpha b_2\right) \notag \\
			= & \dfrac{a_1(k_2 - k_1)}{2f}E_1(f) + \alpha a_2 b_2. \notag
		\end{align}
		Similarly, using \eqref{eq:ConnectionFormSurface}, \eqref{eq:E1kappa2} and \eqref{eq:E2kappa2}, we get
		\begin{align}
			[E_1, E_2](k_2) = & \left ( \nabla_{E_1} E_2 - \nabla_{E_2}E_1 \right ) (k_2) = a_1E_1(k_2) + a_2 E_2(k_2) \label{eq:BracketConnectionkappa2}\\
			= & a_1\left(a_2(k_2 - k_1) - \alpha b_1\right) -\dfrac{a_2(k_2 - k_1)}{2f}\left(E_2(f) + 2fa_1 \right)\notag\\
			= & - \dfrac{ a_2 (k_2 - k_1)}{2f}E_2(f) - \alpha a_1 b_1 \notag.
		\end{align}
		Now, we compute $[E_1, E_2] (k_1)$ and $[E_1, E_2] (k_2)$ using the definition of the Lie bracket.
		
		Differentiating \eqref{eq:E2kappa1} in the direction of $E_1$ and using \eqref{eq:E1kappa2}, \eqref{eq:E1alpha}, \eqref{eq:E1kappa1}, we obtain 
		\begin{align*}
			E_1(E_2(k_1)) = & E_1 \bigl ( a_1(k_2 - k_1) + \alpha b_2 \bigr )\\
			= & E_1(a_1)(k_2 - k_1) + a_1 \bigl ( E_1(k_2) - E_1(k_1) \bigr ) + E_1(\alpha) b_2 + \alpha E_1(b_2)\\
			= & (k_2 - k_1) E_1(a_1) + a_1a_2(k_2 - k_1) - \alpha a_1 b_1 - \dfrac{a_1(k_2 - k_1) }{2f}E_1(f)\\
			& + a_1 a_2 (k_2 - k_1)  + 2\alpha a_2 b_2 + k_2 b_1 b_2  + \alpha E_1(b_2)\\
			= & (k_2 - k_1) E_1(a_1) + 2a_1a_2(k_2 - k_1) - \dfrac{a_1(k_2 - k_1) }{2f}E_1(f)\\
			& - \alpha a_1 b_1 + 2\alpha a_2 b_2 + k_2 b_1 b_2 + \alpha E_1(b_2).
		\end{align*}
		Differentiating \eqref{eq:E1kappa1} in the direction of $E_2$ and using \eqref{eq:E2kappa1} and \eqref{eq:E2kappa2}, we obtain
		\begin{align*}
			E_2(E_1(k_1)) = & E_2 \left((k_2 - k_1)\left(\dfrac{E_1(f)}{2f} - a_2 \right)\right)\\
			= & \bigl ( E_2(k_2) - E_2(k_1) \bigr ) \left(\dfrac{E_1(f)}{2f} - a_2\right)\\
			& + (k_2 - k_1)\left(\dfrac{2f E_2(E_1(f)) - 2 E_1(f) E_2(f)}{4 f^2} - E_2(a_2)\right)\\
			= & \left(-\dfrac{k_2 - k_1}{2f} \left( E_2(f) + 2 f a_1 \right) - a_1(k_2 - k_1) - \alpha b_2 \right) \left(\dfrac{E_1(f)}{2f} - a_2\right)\\
			& + \dfrac{k_2 - k_1}{2 f}E_2(E_1(f)) - \dfrac{k_2 - k_1}{2 f^2}E_1(f) E_2(f) - (k_2 - k_1) E_2(a_2)\\
			= & -\dfrac{3(k_2 - k_1)}{4f^2} E_1(f)E_2(f) + \dfrac{a_2(k_2 - k_1)}{2f}E_2(f) - \dfrac{a_1(k_2 - k_1)}{f} E_1(f)\\
			& +2a_1 a_2(k_2 - k_1) - \dfrac{\alpha b_2}{2f}E_1(f) + \alpha a_2 b_2 + \dfrac{k_2 - k_1}{2 f}E_2(E_1(f))\\
			&  - (k_2 - k_1) E_2(a_2).
		\end{align*}
		Thus
		\begin{align*}
			[E_1, E_2](k_1) = & (k_2 - k_1) \bigl ( E_1(a_1) + E_2(a_2) \bigr ) - \dfrac{k_2 - k_1}{2f} \left(a_1 E_1(f) + a_2 E_2(f)\right) \\
			& + \alpha (a_2 b_2 - a_1 b_1) + k_2 b_1 b_2 + \alpha E_1(b_2) +\dfrac{3(k_2 - k_1)}{4f^2} E_1(f)E_2(f)\\
			& + \dfrac{a_1(k_2 - k_1)}{f} E_1(f) + \dfrac{\alpha b_2}{2f}E_1(f) - \dfrac{k_2 - k_1}{2 f}E_2(E_1(f))\\ 
		\end{align*}
		From \eqref{eq:VanishingBiconservativeWSurfaceFinal} and \eqref{eq:VanishingE1a1PlusE2a2}, we get
		\begin{align}
			[E_1, E_2](k_1) = &\dfrac{3(k_2 - k_1)}{4f^2} E_1(f)E_2(f) - \dfrac{k_2 - k_1}{2 f}E_2(E_1(f)) + \alpha (a_2 b_2 - a_1 b_1)  
			\label{eq:BracketDefinitionkappa1}\\
			& + k_2 b_1 b_2 + \alpha E_1(b_2) + \dfrac{a_1(k_2 - k_1)}{f} E_1(f) + \dfrac{\alpha b_2}{2f}E_1(f). \notag
		\end{align}
		Next we compute $[E_1, E_2](k_2)$. Differentiating \eqref{eq:E2kappa2} in the direction of $E_1$ and using \eqref{eq:E1kappa2} and \eqref{eq:E1kappa1}, we obtain
		\begin{align*}
			E_1(E_2(k_2)) = & E_1\left(-(k_2 - k_1) \left( \dfrac{E_2(f)}{2f} + a_1 \right)\right)\\
			= & \bigl ( -E_1(k_2) + E_1(k_1) \bigr ) \left(\dfrac{E_2(f)}{2f} + a_1\right)\\
			& - (k_2 - k_1)\left(\dfrac{2f E_1(E_2(f)) - 2 E_2(f) E_1(f)}{4 f^2} + E_1(a_1)\right)\\
			= & \left(-a_2(k_2 - k_1) + \alpha b_1 + \frac {k_2 - k_1} {2f} \left( E_1(f) - 2fa_2 \right) \right) 
			\left(\dfrac{E_2(f)}{2f} + a_1\right)\\
			& - \dfrac{k_2 - k_1}{2 f}E_1(E_2(f)) + \dfrac{k_2 - k_1}{2 f^2}E_1(f) E_2(f) - (k_2 - k_1) E_1(a_1)\\
			= & \dfrac{3(k_2 - k_1)}{4f^2} E_1(f)E_2(f) + \dfrac{a_1(k_2 - k_1)}{2f}E_1(f) - \dfrac{a_2(k_2 - k_1)}{f} E_2(f)\\
			& -2a_1 a_2(k_2 - k_1) + \dfrac{\alpha b_1}{2f}E_2(f) + \alpha a_1 b_1 - \dfrac{k_2 - k_1}{2 f}E_1(E_2(f))\\
			&  - (k_2 - k_1) E_1(a_1).
		\end{align*}
		Differentiating \eqref{eq:E1kappa2} in the direction of $E_2$ and using \eqref{eq:E2kappa1}, \eqref{eq:E2alpha}, \eqref{eq:E2kappa2}, we have 
		\begin{align*}
			E_2(E_1(k_2)) = & E_2 \bigl ( a_2(k_2 - k_1) - \alpha b_1 \bigr )\\
			= & E_2(a_2)(k_2 - k_1) + a_2 \bigl ( E_2(k_2) - E_2(k_1) \bigr ) - E_2(\alpha) b_1 - \alpha E_2(b_1)\\
			= & (k_2 - k_1) E_2(a_2) - \dfrac{a_2(k_2 - k_1)}{2f} E_2(f) - a_1 a_2(k_2 - k_1) - a_1 a_2(k_2 - k_1)\\
			& - \alpha a_2 b_2 +2\alpha a_1 b_1 + k_1 b_1 b_2 - \alpha E_2(b_1) \\
			= & (k_2 - k_1) E_2(a_2) - \dfrac{a_2(k_2 - k_1)}{2f} E_2(f) - 2a_1 a_2(k_2 - k_1) \\
			& - \alpha a_2 b_2 +2\alpha a_1 b_1 + k_1 b_1 b_2 - \alpha E_2(b_1).
		\end{align*}
		Then, 
		\begin{align*}
			[E_1, E_2](k_2) = & \dfrac{3(k_2 - k_1)}{4f^2} E_1(f)E_2(f) + \dfrac{(k_2 - k_1)}{2f} \bigl ( a_1 E_1(f) + a_2 E_2(f) \bigr ) - \dfrac{a_2(k_2 - k_1)}{f}  E_2(f)\\
			& + \dfrac{\alpha b_1}{2f}E_2(f) + \alpha ( a_2 b_2 - a_1 b_1) - \dfrac{k_2 - k_1}{2 f}E_1(E_2(f))\\
			& - (k_2 - k_1) \bigl ( E_1(a_1)+ E_2(a_2) \bigr ) - k_1 b_1 b_2 + \alpha E_2(b_1).
		\end{align*}
		Moreover, using \eqref{eq:VanishingBiconservativeWSurfaceFinal} and \eqref{eq:VanishingE1a1PlusE2a2} we get
		\begin{align}
			[E_1, E_2](k_2) = & \dfrac{3(k_2 - k_1)}{4f^2} E_1(f)E_2(f) - \dfrac{a_2(k_2 - k_1)}{f}E_2(f) + \dfrac{\alpha b_1}{2f}E_2(f) \label{eq:BracketDefinitionkappa2}\\		
			& + \alpha (a_2 b_2 - a_1 b_1) - \dfrac{k_2 - k_1}{2 f}E_1(E_2(f)) - k_1 b_1 b_2 + \alpha E_2(b_1) \notag
		\end{align}
		Combining the two expressions of $[E_1, E_2] (k_1)$ given in \eqref{eq:BracketConnectionkappa1} and \eqref{eq:BracketDefinitionkappa1}, we obtain
		\begin{align} \label{eq:FinalConditionBracketkappa1}
			& (k_2 - k_1) \bigl ( 3E_1(f)E_2(f) - 2fE_2(E_1(f)) + 2 f a_1  E_1(f) \bigr ) \\
			& + 4 f^2 \bigl ( -\alpha a_1 b_1 +  k_2 b_1 b_2 + \alpha E_1(b_2) \bigr ) + 2 f\alpha b_2E_1(f) = 0. \notag
		\end{align}
		Combining the two expressions of $[E_1, E_2] (k_2)$ given in \eqref{eq:BracketConnectionkappa2} and \eqref{eq:BracketDefinitionkappa2}, we get
		\begin{align} \label{eq:FinalConditionBracketkappa2}
			& (k_2 - k_1) \bigl ( 3E_1(f)E_2(f) - 2fE_1(E_2(f)) - 2 f a_2  E_2(f) \bigr ) \\
			& + 4 f^2 \bigl ( \alpha a_2 b_2 -  k_1 b_1 b_2 + \alpha E_2(b_1) \bigr ) +  2f\alpha b_1E_2(f) = 0. \notag
		\end{align}
		Now, subtracting \eqref{eq:FinalConditionBracketkappa2} from \eqref{eq:FinalConditionBracketkappa1}, we have
		\begin{align*}
			& (k_2 - k_1) \bigl ( 2f[E_1, E_2](f) + 2 f (a_1  	E_1(f) + a_2  E_2(f)) \bigr )\\
			& + 4 f^2 \bigl ( -\alpha( a_1 b_1 +  a_2 b_2) + (k_1 + k_2) b_1 b_2 + \alpha \bigl ( E_1(b_2)- E_2(b_1) \bigr ) \bigr )\\
			& + 2 f\alpha \bigl ( b_2E_1(f) -b_1E_2(f) \bigr ) = 0,
		\end{align*}
		Using \eqref{eq:FlatNormalBundleEquation}, \eqref{eq:VanishingBiconservativeWSurfaceFinal} and \eqref{eq:VanishingOfLieBracketsOfFandK}, we obtain
		\begin{equation} \label{eq:DifferenceBracketInitial}
			4 f^2 b_1 b_2 + \alpha \bigl ( b_2E_1(f) - b_1E_2(f) \bigr ) = 0.
		\end{equation}
		Multiplying \eqref{eq:DifferenceBracketInitial} by $(k_1 + f) (k_2 + f)$ and using \eqref{eq:E1MeanCurvature} and \eqref{eq:E2MeanCurvature}, we get
		\begin{align*}
			& 4f^2 b_1 b_2(k_1 + f) (k_2 + f) + \alpha b_2 E_1(f) (k_1 + f) (k_2 + f)\\
			& - \alpha b_1 E_2(f) (k_1 + f) (k_2 + f)  = 0\\
			\Leftrightarrow	& 4 f^2 b_1 b_2 \left ( k_1 k_2 + (k_1 + k_2) f + f^2 \right ) - f \alpha ^2 b_1 b_2 (k_2 + f) - f \alpha^2 b_1 b_2 (k_1 + f) = 0 \\
			\Leftrightarrow & 4 f^2 b_1 b_2 \left ( 3 f^2 + k_1 k_2 \right ) - 4 f^2 b_1 b_2 \alpha^2 = 0 \\
			\Leftrightarrow & 4 f^2 b_1 b_2 \left ( 3 f^2 + k_1 k_2 - \alpha^2 \right ) = 0 \\
			\Leftrightarrow & b_1 b_2 \left ( 3 f^2 + k_1 k_2 - \alpha^2 \right ) = 0.
		\end{align*}
		Substituting \eqref{eq:GaussianCurvatureFromEquation}, we obtain
		\begin{equation*} \label{eq:FinalEquationBiconservativeSurface}
			b_1 b_2 \left ( 3f^2 + K - \epsilon \right ) = 0, \quad \text{on } M.
		\end{equation*}
		Taking into account the fact that $3f^2 + K - \epsilon \neq 0$ at any point of $M$ from the hypothesis, we deduce that $b_1 b_2 = 0$, which, using \eqref{eq:ConnectionFormNormal}, is equivalent to $\langle \nabla_{E_1}^\perp E_3, E_4 \rangle \langle \nabla_{E_2}^\perp E_3, E_4 \rangle = 0$. Replacing this in \eqref{eq:DifferenceBracketInitial}, we obtain \eqref{eq:DifferenceBracketFinal}.
	\end{proof}

	\begin{remark}
		The hypothesis of $3 f^2 + K - \epsilon \neq 0$ at any point was only used to obtain the fact that $b_1 b_2 = 0$ on $M$.
	\end{remark}
	
	The conclusion of Lemma \ref{th:constraintNormalConnection} can be rephrased as $b_1 b_2 = 0$ on $M$. If $b_1 = b_2 = 0$ on $M$, then \eqref{eq:ConnectionFormNormal} implies that $\nabla^\perp E_3 = 0$, that is $M$ is PNMC. The PNMC biconservative surfaces in $4$-dimensional space forms were classified in \cite{NistorOniciucTurgaySen2023} and \cite{NistorRusu2024}. Since we are interested in the non-PNMC case, then, eventually restricting $M$, we further assume that $b_1^2 + b_2^2 > 0$ on $M$. 
	
	\begin{proposition} \label{th:normalConnection}
		Let $\varphi: M^2 \to N^4(\epsilon)$ be a non-CMC biconservative W-surface with flat normal bundle. Assume that $3 f^2 + K - \epsilon \neq 0$ at any point. If $M$ is non-PNMC, then $\left \langle \nabla_{E_1}^\perp E_3, E_4 \right \rangle \neq 0$ at any point of $M$ and $\left \langle \nabla_{E_2}^\perp E_3, E_4 \right \rangle = 0$ on $M$. 
	\end{proposition}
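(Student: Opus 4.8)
The plan is to combine the conclusion $b_1 b_2 = 0$ of Lemma~\ref{th:constraintNormalConnection} with the standing non-PNMC assumption $b_1^2 + b_2^2 > 0$ on $M$, and then to invoke connectedness of $M$. By \eqref{eq:ConnectionFormNormal}, the two assertions of the proposition say exactly that $b_1$ vanishes nowhere and that $b_2 \equiv 0$. So the first step is to set $U_1 = \{\, p \in M : b_1(p) \neq 0 \,\}$ and $U_2 = \{\, p \in M : b_2(p) \neq 0 \,\}$; both are open. They are disjoint, since $b_1 b_2 = 0$ on $M$ forces at most one of $b_1(p), b_2(p)$ to be nonzero, and $U_1 \cup U_2 = M$, since $b_1^2 + b_2^2 > 0$ forces at least one of them to be nonzero. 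Hence $M = U_1 \sqcup U_2$ is a disjoint union of two open sets, so each of $U_1, U_2$ is also closed.

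Next, because $M$ is connected, exactly one of $U_1, U_2$ is empty. If $U_2 = \emptyset$, then $b_2 \equiv 0$ and, since $b_1^2 + b_2^2 > 0$, $b_1$ vanishes nowhere; recalling \eqref{eq:ConnectionFormNormal}, this is precisely the conclusion. If instead $U_1 = \emptyset$, then $b_1 \equiv 0$ and $b_2$ vanishes nowhere, and I would reduce to the previous case by interchanging the roles of $E_1$ and $E_2$. Up to this point the tangent frame $\{E_1, E_2\}$ has only been required to diagonalize $A_3$ as in \eqref{eq:ShapeOperator3}, with no preferred ordering of the eigenvalue functions $k_1, k_2$; the interchange corresponds to $k_1 \leftrightarrow k_2$, $\alpha \to -\alpha$ (forced by \eqref{eq:ShapeOperator4}), $b_1 \leftrightarrow b_2$ and $(a_1, a_2) \to (-a_2, -a_1)$, and under these substitutions every relation used so far — \eqref{eq:ShapeOperator4}, \eqref{eq:ConnectionFormSurface}, \eqref{eq:ConnectionFormNormal}, \eqref{eq:FlatNormalBundleEquation}, the fundamental equations \eqref{eq:GaussianCurvatureFromEquation}--\eqref{eq:E2alpha}, the W-surface condition \eqref{eq:WSurfaceEquaition}, the biconservativity relations \eqref{eq:E1MeanCurvature}--\eqref{eq:E2GaussianCurvature} and the identities of Lemma~\ref{th:constraintNormalConnection} — is carried to the analogous relation with the indices $1$ and $2$ swapped (for instance \eqref{eq:E1alpha} becomes \eqref{eq:E2alpha}, and \eqref{eq:E1MeanCurvature} becomes \eqref{eq:E2MeanCurvature}). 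Thus after relabeling we are back in the case $U_2 = \emptyset$, which completes the argument.

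The proof is short and essentially topological, so there is no hard computational core; the step I would treat as the main obstacle is checking that the interchange $E_1 \leftrightarrow E_2$ is compatible with all the normalizations already in force, especially that the sign of $\alpha$ in \eqref{eq:ShapeOperator4} is tied to the choice of which eigendirection of $A_3$ is $E_1$. I would also stress that this relabeling step is essential rather than cosmetic: since the whole setup is invariant under $E_1 \leftrightarrow E_2$, the case $b_1 \equiv 0$ cannot be excluded on its own, and the actual content of the proposition is that, once the eigendirections of $A_3$ are ordered appropriately, the normal connection $\nabla^\perp$ is governed by $E_1$ alone.
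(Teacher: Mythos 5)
Your argument is correct and is essentially the paper's own proof: the paper likewise uses $b_1 b_2 = 0$ together with $b_1^2 + b_2^2 > 0$ and connectedness to reduce to the two cases, and then disposes of the case $b_1 \equiv 0$ by observing that interchanging $E_1$ and $E_2$ leaves all previously obtained equations unchanged. The only difference is that you spell out the substitution rules $k_1 \leftrightarrow k_2$, $\alpha \to -\alpha$, $b_1 \leftrightarrow b_2$, $(a_1,a_2) \to (-a_2,-a_1)$ explicitly, where the paper simply asserts the symmetry as easy to check.
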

	
	\begin{proof}
		Since $b_1 b_2 = 0$ and $b_1^2 + b_2^2 > 0$ on $M$, we have either
		\[
		b_1 = 0 \quad \text{on } M \quad \text{and} \quad b_2 \neq 0 \quad \text{at any point of } M,
		\]
		or, since $M$ is connected, 
		\[
		b_1 \neq 0 \quad \text{at any point of } M \quad \text{and} \quad b_2 = 0 \quad \text{on } M.
		\] 
		On the other hand, it is easy to check that interchanging $E_1$ and $E_2$ leaves the set of all previously obtained equations unchanged. Therefore, we have only one case and we can choose
		\[
		b_1 \neq 0 \quad \text{at any point of } M \quad \text{and} \quad b_2 = 0 \quad \text{on } M,
		\]
		which represents, using \eqref{eq:ConnectionFormNormal}, the conclusion.
	\end{proof}
	From \eqref{eq:DifferenceBracketFinal} and the hypotheses that $b_1 \neq 0$ and $b_2 = 0$, we obtain
	\begin{equation} \label{eq:alphaE2fZero}
		\alpha E_2(f)=0 \quad \text{on } M.
	\end{equation}
	Suppose by way of contradiction that $\alpha = 0$ on $M$, or on an open subset. It follows from \eqref{eq:E1alpha} that $k_2 b_1 = 0$ on $M$ and, since $b_1 \neq 0$ at any point, we obtain that $k_2 = 0$ on $M$. Hence, from \eqref{eq:E1MeanCurvature} and \eqref{eq:E2MeanCurvature}, $f E_1(f) = f E_2(f) = 0$ on $M$. Since $f$ cannot vanish on $M$, we obtain that $E_1 (f) = E_2 (f) = 0$ on $M$, that is $\grad f = 0$ on $M$, contradiction.
	
	Therefore, eventually restricting $M$ and using \eqref{eq:alphaE2fZero}, we further assume that 
	\begin{equation} \label{eq:alphaNeq0E2fZero}
		\alpha \neq 0 \quad \text{at any point of } M \quad \text{and} \quad E_2 (f) = 0 \quad \text{on } M.
	\end{equation}
	We note that $\alpha \neq 0$ is equivalent to $A_4 \neq 0$.
	
	\begin{remark}
		We note that if $M$ is a non-CMC biconservative surface which is PNMC, then one can have $A_4 = 0$ but, in this case, we have a reduction of the codimension (see \cite{NistorOniciucTurgaySen2023} and \cite{NistorRusu2024}). We note that, in general, the codimension of a non-minimal surface in $N^4 (\epsilon)$ can be reduced if and only if $A_4 = 0$ and it is PNMC. When a non-CMC biconservative surface is non-PNMC, the codimension cannot be reduced. Moreover, we have seen that the case $A_4 = 0$ cannot occur.
	\end{remark}
	
	\begin{remark}
		In the case of PNMC biconservative surfaces in $4$-dimensional space forms it is known that $\grad f$ is an eigenvector of $A_3$, see \cite{FetcuLoubeauOniciuc2021} and \cite{SenTurgay2018}. In our case, when the surface is non-PNMC, as $\grad f \neq 0$ at any point and $E_2 (f) = 0$ on $M$, this fact remains true, that is, up to the sign,
		\[
		E_1 = \frac {\grad f} {|\grad f|}.
		\]
	\end{remark}

	Moreover, from \eqref{eq:VanishingBiconservativeWSurfaceFinal} we get that 
	\begin{equation} \label{eq:a1Zero}
		a_1 = 0 \quad \text{on } M.
	\end{equation}
	Now, assume by way of contradiction that $a_2 = 0$ on $M$, or on an open subset. From \eqref{eq:GaussEquation} we obtain that $K = 0$ on $M$. Then, using \eqref{eq:E1GaussianCurvature} we find that $f E_1 (f) = 0$, which is a contradiction since neither $f$, nor $E_1 (f)$ can vanish on $M$. Therefore, eventually restricting $M$, we further assume that
	\begin{equation*} \label{eq:a2Neq0}
		a_2 \neq 0 \quad \text{at any point of } M,
	\end{equation*}
	that is $\nabla _{E_2} E_2 \neq 0$ at any point.
	
	From \eqref{eq:FlatNormalBundleEquation}, \eqref{eq:E2kappa1}, \eqref{eq:E2alpha}, \eqref{eq:E2kappa2}, \eqref{eq:E2GaussianCurvature} and \eqref{eq:VanishingE1a1PlusE2a2} we obtain
	\begin{equation} \label{eq:VanishingOfTheDerivativesInTheDirOfE2}
		E_2 (b_1) = E_2 (k_1) = E_2 (\alpha) = E_2 (k_2) = E_2 (K) = E_2 (a_2) = 0, \quad \text{on } M.
	\end{equation}
	
	Assume that we are in the hypotheses of Proposition \ref{th:normalConnection}. Let $p_0 \in M$ be an arbitrary fixed point. We consider $\left \{ \psi_s \right \}_{s \in \mathbb R}$ the flow of $E_1$ around $p_0$ and $\gamma = \gamma (t)$ the integral curve of $E_2$ with $\gamma (0) = p_0$. We define the following local chart
	\[
	X^f (s, t) = \psi_s (\gamma (t)) = \psi_{\gamma (t)} (s).
	\]
	We have
	\begin{align*}
		X^f (0, t) =& \gamma (t), \quad \text{for any } t \\
		X^f_t (0, t) =& E_2 (0, t), \quad \text{for any } t \\
		X^f_s (s, t) =& E_1 (s, t), \quad \text{for any } (s, t)
	\end{align*}
	Now, we determine the expression of the metric on $M$ in this local chart.
	\begin{proposition} \label{th:metricOnM}
		Let $\varphi: M^2 \to N^4(\epsilon)$ be a non-CMC biconservative W-surface with flat normal bundle. Assume that $3 f^2 + K - \epsilon \neq 0$, $\nabla^\perp E_3 \neq 0$, $A_4 \neq 0$ and $\nabla_{E_2} E_2 \neq 0$ at any point. Then, around any point, there exist local coordinates $(s, t)$ such that $a_2 = a_2 (s)$ and 
		\begin{equation*}
			g(s, t) = ds^2 + g_{22} (s) dt^2,
		\end{equation*}
		where $g_{22} = g_{22} (s)$ is a positive solution of the following ODE 
		\begin{equation*} \label{eq:differentialEquationMetric}
			\frac {d g_{22}} {ds} = - 2 a_2 g_{22}.
		\end{equation*} 
		Moreover, 
		\[
		E_1 = \frac \partial {\partial s} = \grad s \quad \text{and} \quad E_2 = \frac 1 {\sqrt {g_{22}}} \frac \partial {\partial t}.
		\]
	\end{proposition}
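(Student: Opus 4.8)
The plan is to read the metric off the chart $X^f$ directly, using the flow property together with the connection coefficients, which by \eqref{eq:a1Zero} (i.e. $a_1 = 0$) reduce \eqref{eq:ConnectionFormSurface} to
\[
\nabla_{E_1} E_1 = \nabla_{E_1} E_2 = 0, \qquad \nabla_{E_2} E_1 = - a_2 E_2, \qquad \nabla_{E_2} E_2 = a_2 E_1.
\]
Since $s \mapsto X^f(s,t)$ is the integral curve of $E_1$ through $\gamma(t)$, one has $X^f_s = E_1$ everywhere (not merely along $s = 0$), so the coordinate fields commute, $[E_1, X^f_t] = [X^f_s, X^f_t] = 0$, whence $\nabla_{X^f_s} X^f_t = \nabla_{X^f_t} E_1$. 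Denote by $g_{11}, g_{12}, g_{22}$ the components of $g$ in the coordinates $(s,t)$; clearly $g_{11} = |E_1|^2 = 1$.

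First I would prove that the chart is orthogonal. Writing $X^f_t = g_{12} E_1 + c\, E_2$ with $c \in C^\infty(M)$ (so $c^2 = g_{22} - g_{12}^2$), the connection above gives $\nabla_{X^f_t} E_1 = c\, \nabla_{E_2} E_1 = - c a_2 E_2$, hence
\[
\frac{\partial g_{12}}{\partial s} = g\!\left( \nabla_{X^f_s} E_1, X^f_t \right) + g\!\left( E_1, \nabla_{X^f_t} E_1 \right) = 0 + \langle E_1, - c a_2 E_2 \rangle = 0,
\]
so $g_{12} = g_{12}(t)$. Since $X^f_s(0,t) = E_1(0,t)$ and $X^f_t(0,t) = E_2(0,t)$, we get $g_{12}(0,t) = \langle E_1, E_2 \rangle = 0$, so $g_{12} \equiv 0$. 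Therefore $g = ds^2 + g_{22}\, dt^2$ and $X^f_t = c\, E_2$ with $c^2 = g_{22} > 0$; since $c(0,t) = 1$ and $c$ never vanishes, $c = \sqrt{g_{22}}$, i.e. $E_2 = g_{22}^{-1/2}\, X^f_t$. Also $E_1 = X^f_s = \grad s$, because $\grad s = g^{11}\,X^f_s + g^{12}\,X^f_t = X^f_s$.

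Next I would differentiate $g_{22}$. Using $\nabla_{X^f_s} X^f_t = \nabla_{X^f_t} E_1 = \sqrt{g_{22}}\, \nabla_{E_2} E_1 = - a_2 \sqrt{g_{22}}\, E_2$,
\[
\frac{\partial g_{22}}{\partial s} = 2\, g\!\left( \nabla_{X^f_s} X^f_t, X^f_t \right) = 2\, g\!\left( - a_2 \sqrt{g_{22}}\, E_2,\ \sqrt{g_{22}}\, E_2 \right) = - 2 a_2\, g_{22}.
\]
Since $E_2(a_2) = 0$ by \eqref{eq:VanishingOfTheDerivativesInTheDirOfE2} and $E_2 = g_{22}^{-1/2} X^f_t$, it follows that $\partial a_2 / \partial t = 0$, i.e. $a_2 = a_2(s)$; then $\partial_s \log g_{22} = - 2 a_2(s)$ together with $g_{22}(0,t) = \langle E_2, E_2 \rangle = 1$ yields $g_{22}(s,t) = \exp\!\big( -2 \textstyle\int_0^s a_2(\sigma)\, d\sigma \big)$, which depends on $s$ alone and is the (positive) solution of $d g_{22}/ds = - 2 a_2 g_{22}$, as claimed.

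I do not expect a genuine obstacle here: the proposition is essentially a bookkeeping exercise with the chart $X^f$. The two points needing care are (i) using the \emph{global} identity $X^f_s = E_1$, so that $[E_1, X^f_t] = 0$ holds on the whole chart and not just along $t \mapsto \gamma(t)$, and (ii) pinning down the initial data at $s = 0$ from $X^f_s(0,t) = E_1(0,t)$, $X^f_t(0,t) = E_2(0,t)$, which give $g_{12}(0,\cdot) = 0$ and $g_{22}(0,\cdot) = 1$; continuity and nondegeneracy of $g$ then force the sign $c = +\sqrt{g_{22}}$.
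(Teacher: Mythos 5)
Your argument is correct, and it reaches the same conclusion through the same chart $X^f$ built from the flow of $E_1$ and the integral curve of $E_2$, but the mechanism in the two key steps differs from the paper's. For orthogonality ($g_{12}\equiv 0$) the paper does not touch the connection at all: it writes $E_2$ in the coordinate frame, uses $E_2(f)=0$ and $[E_1,E_2](f)=0$ together with $\grad f\neq 0$ to force $X^f_s(g_{12})=0$, and then invokes $g_{12}(0,t)=0$. You instead use metric compatibility plus the connection formulas \eqref{eq:ConnectionFormSurface} with $a_1=0$ (equation \eqref{eq:a1Zero}, legitimately available at this point of the paper under the stated hypotheses) to get $\partial_s g_{12}=0$ directly; both routes are sound, the paper's being tied to the geometry of $f$ (it only needs that $E_1$ is proportional to $\grad f$), yours being tied to $E_1$ being geodesic and the frame data. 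For the ODE, the paper computes Christoffel symbols of the product-form metric and matches $\Gamma^2_{12}$ against $\nabla_{E_2}E_1=-a_2E_2$, whereas you differentiate $g_{22}=\lvert X^f_t\rvert^2$ directly along $s$ using $\nabla_{X^f_s}X^f_t=\nabla_{X^f_t}E_1$; these are equivalent computations. Finally, you exploit the initial condition $g_{22}(0,t)=1$ to kill the $t$-dependent integration "constant'' at once, which makes the paper's closing change of coordinates $t\mapsto\tilde t$ unnecessary; the paper instead keeps an arbitrary primitive $A_2$ of $a_2$ and normalizes by reparametrizing $t$ (and in doing so records that $g_{22}$ is only determined up to a positive multiplicative constant, a remark it reuses later). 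Your version is marginally more economical; the paper's is marginally more flexible for the later normalizations. No gaps.
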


	\begin{proof}
		In the local chart $X^f$, the Riemannian metric of $M^2$ can be written as
		\[
		g = g_{11} ds^2 + 2 g_{12} ds dt + g_{22} dt^2,
		\]
		where $g_{11} = g_{11} (s, t)$, $g_{12} = g_{12} (s, t)$ and $g_{22} = g_{22} (s, t)$ are smooth functions. We have
		\begin{align*}
			g_{11} (s, t) =& \left | X^f_s (s, t) \right |^2 = |E_1 (s, t)|^2 = 1,\\
			g_{12} (0, t) =& \left \langle X^f_s (0, t), X^f_t (0, t) \right \rangle = \langle E_1 (0, t), E_2 (0, t) \rangle = 0,\\
			g_{22} (0, t) =& \left | X^f_t (0, t) \right |^2 = | E_2 (0, t) |^2 = 1,
		\end{align*}
		for any $s$ and $t$.
		
		Suppose that $E_2 = a X^f_s + b X^f_t$. We have
		\[
		\left \langle E_2, X^f_s \right \rangle = \langle E_2, E_1 \rangle = 0.
		\]
		On the other hand,
		\[
		\left \langle E_2, X^f_s \right \rangle = \left \langle a X^f_s + b X^f_t, X^f_s \right \rangle = a g_{11} + bg_{12} = a + b g_{12}.
		\]	
		Thus,
		\[
		a = -b g_{12},
		\]
		and
		\[
		E_2 = b \left ( X_t^f - g_{12}X_s^f \right ).
		\]
		We know that
		\[
		1 = |E_2|^2 = b^2\left ( g_{22} - 2g_{12}^2 + g_{12}^2 g_{11} \right ) = b^2 \left ( g_{22} - g_{12}^2 \right )
		\]
		and since $g_{22} - g_{12}^2 = g_{22} g_{11} - g_{12}^2 > 0$, without loss of generality, we can assume that 
		\[
		b = \frac 1 {\sqrt{g_{22} - g_{12}^2}}
		\]
		and obtain
		\begin{equation} \label{eq:E1E2InLocalChart}
			E_1 = X^f_s \quad \text{and} \quad E_2 = \frac 1 {\sqrt {g_{22} - g_{12}^2}} \left ( X^f_t - g_{12} X^f_s \right ).
		\end{equation}
		Let $f(s, t) = \left ( f \circ X^f \right )(s, t)$ be the mean curvature function expressed in this local chart. Since $E_2 (f) = 0$, from \eqref{eq:E1E2InLocalChart} we obtain
		\begin{equation}\label{eq:relationNaturalBasis}
			X_t^f(f) = g_{12} X_s^f(f).
		\end{equation}
		Combining \eqref{eq:VanishingOfLieBracketsOfFandK}, \eqref{eq:alphaNeq0E2fZero}, \eqref{eq:E1E2InLocalChart} and \eqref{eq:relationNaturalBasis}, we obtain
		\begin{equation*}
			0 = [E_1, E_2](f) = E_2 (E_1 (f)),
		\end{equation*}
		that is
		\begin{align*}
			0 = & \left ( X^f_t - g_{12} X^f_s \right ) \left ( X^f_s (f) \right ) \\
			  =& X^f_t \left ( X^f_s (f) \right ) - g_{12} X^f_s \left ( X^f_s (f) \right ) \\
			  =& X^f_t \left ( X^f_s (f) \right ) - X^f_s \left ( g_{12} X^f_s (f) \right ) + X^f_s (g_{12}) X^f_s (f) \\
			  =& X^f_t \left ( X^f_s (f) \right ) - X^f_s \left ( X^f_t (f) \right ) + X^f_s (g_{12}) X^f_s (f) \\
			  =& \left [ X^f_t, X^f_s \right ] (f) + X^f_s (g_{12}) E_1 (f).
		\end{align*}
		Using the fact that $\left [ X^f_t, X^f_s \right ] = 0$ and $|E_1 (f)| = |\grad f| \neq 0$, we get $X^f_s (g_{12}) = 0$ everywhere, which implies that
		\begin{equation*}
			g_{12} (s, t) = g_{12} (0, t) = 0,
		\end{equation*}
		for any $s$ and $t$. Therefore,
		\[
		g (s, t) = ds^2 + g_{22} (s, t) dt^2,
		\]
		\[
		E_1 = X^f_s = \frac \partial {\partial s} \quad \text{and} \quad E_2 = \frac 1 {\sqrt {g_{22}}} X^f_t = \frac 1 {\sqrt {g_{22}}} \frac \partial {\partial t}.
		\]
		Next, we find a differential equation which defines $g_{22}$.
		
		From \eqref{eq:ConnectionFormSurface}, we have
		\begin{align*}
			0 = & \nabla_{E_1} E_2 = \nabla_{ \frac{\partial}{\partial s}} \left( \frac{1}{\sqrt{g_{22}}} \frac{\partial}{\partial t}\right) = \frac{\partial}{\partial s}\left(\frac{1}{\sqrt{g_{22}}}\right)\frac{\partial}{\partial t} + \frac{1}{\sqrt{g_{22}}} \nabla_{ \frac{\partial}{\partial s}} \frac{\partial}{\partial t} \\
			= & -\frac{1}{2} \frac{\partial g_{22}}{\partial s} \frac{1}{\sqrt{g_{22}^3}} \frac{\partial}{\partial t} + \frac{1}{\sqrt{g_{22}}} \left(\Gamma_{12}^1\frac{\partial}{\partial s} + \Gamma_{12}^2\frac{\partial}{\partial t}\right),
		\end{align*}
		which implies that
		\begin{equation} \label{eq:CristofellSymbolsMetric1}
			\Gamma_{12}^1 = 0 \quad \text{and} \quad \Gamma_{12}^2 = \frac{1} {2g_{22}} \frac{\partial g_{22}}{\partial s}.
		\end{equation} 
		We also know from \eqref{eq:ConnectionFormSurface} that
		\[
		\nabla_{E_2} E_1 = - a_2 E_2 = - \frac{a_2}{\sqrt{g_{22}}} \frac{\partial}{\partial t}.	
		\]
		On the other hand,
		\[
		- \frac{a_2}{\sqrt{g_{22}}} \frac{\partial}{\partial t} = \nabla_{E_2} E_1 = \nabla_{\frac{1}{\sqrt{g_{22}}} \frac{\partial}{\partial t}} \dfrac{\partial}{\partial s} = \frac{1}{\sqrt{g_{22}}} \left(\Gamma_{12}^1 \dfrac{\partial}{\partial s} + \Gamma_{12}^2 \dfrac{\partial}{\partial t} \right) = \frac{1}{\sqrt{g_{22}}} \Gamma_{12}^2 \dfrac{\partial}{\partial t}.
		\]
		Thus,
		\begin{equation} \label{eq:CristofellSymbolsMetric2}
			\Gamma_{12}^2 = - a_2.
		\end{equation}
		Combining \eqref{eq:CristofellSymbolsMetric1} and \eqref{eq:CristofellSymbolsMetric2}, we obtain that
		\begin{equation}\label{eq:MetricDifferentialEquation}
			\frac {\partial g_{22}} {\partial s} = - 2 a_2 g_{22}.
		\end{equation}
		Computing the other Christofell symbols, we get no additional information. Also, note that from \eqref{eq:VanishingOfTheDerivativesInTheDirOfE2}, we obtain that the function $a_2$ depends only on the parameter $s$.
		
		In the following we want to find the positive solutions of \eqref{eq:MetricDifferentialEquation}. We have
		\[
		\frac {\partial} {\partial s} \bigl ( \ln (g_{22}(s, t)) \bigr ) = - 2 a_2 (s).
		\]
		We consider an arbitrarily fixed primitive $A_2$ of $a_2$. Thus, we get that 
		\[
		\ln (g_{22} (s, t)) = - 2 A_2 (s) + 2c_1 (t),
		\]
		where $c_1$ is a smooth function. Therefore,
		\[
		g_{22} (s, t) = e^{2c_1 (t)} e^{-2 A_2 (s)}
		\]
		and the metric $g$ becomes
		\[
		g (s, t) = ds^2 + e^{2c_1 (t)} e^{-2 A_2 (s)} dt^2.
		\]
		If we consider the change of coordinates
		\[
		(s, t) \to \left ( \tilde s = s, \tilde t = \int _0 ^t e^{c_1 (\tau)} d\tau \right ),
		\]
		the metric $g$ takes the form
		\[
		g = d \tilde s^2 + e^{-2 A_2 \left ( \tilde s \right )} d \tilde t^2.
		\]
		In conclusion, we obtain $\tilde g_{22} = \tilde g_{22} \left ( \tilde s \right ) = e^{-2 A_2 \left ( \tilde s \right )}$. In fact, $\tilde g_{22}$ is uniquely determined up to a multiplicative positive constant, but this constant does not play an essential role since we can always make a simple transformation and include it in the new parameter $\tilde t$.
		
		Moreover,
		\[
		E_1 = \frac \partial {\partial \tilde s} \quad \text{and} \quad E_2 = \frac 1 {\sqrt {\tilde g_{22}}} \frac \partial {\partial \tilde t}.
		\]
		For a simpler notation we redenote $\left ( \tilde s, \tilde t \right ) \to (s, t)$.
	\end{proof}

	Summarizing all information we have until now, we obtain that a non-CMC (non-PNMC) biconservative W-surface with flat normal bundle must satisfy the following first order ODE system.
	
	\begin{theorem} \label{th:directTheorem}
		Let $\varphi: M^2 \to N^4(\epsilon)$ be a non-CMC biconservative W-surface with flat normal bundle. Assume that $3 f^2 + K - \epsilon \neq 0$, $\nabla^\perp E_3 \neq 0$, $A_4 \neq 0$ and $\nabla_{E_2} E_2 \neq 0$ at any point. Then, around any point, there exist local coordinates $(s, t)$ such that $f = f(s)$, $k_1 = k_1 (s)$, $k_2 = k_2 (s)$, $\alpha = \alpha (s)$, $a_2 = a_2 (s)$, $b_1 = b_1 (s)$ and $K = K (s)$. Moreover, the tuple $\left ( a_2, f, \alpha, k_2 \right )$ is a solution of the following first order ODE system
		\begin{equation} \label{eq:differentialSystemTangentPart}
			\left \{
			\begin{array}{l}
				\dot a_2 = \epsilon + k_2 (2 f - k_2) - \alpha^2 + a_2^2 \\[5pt]
				\displaystyle \dot f = - \frac {f \alpha b_1} {3 f - k_2} \\[10pt]
				\dot \alpha = 2 \alpha a_2 + k_2 b_1 \\[5pt]
				\dot k_2 = 2 a_2 (k_2 - f) - \alpha b_1
			\end{array} 
			\right .,
		\end{equation}
		where $\dot a_2$, $\dot f$, $\dot \alpha$ and $\dot k_2$ represent the derivatives with respect to $s$ of $a_2$, $f$, $\alpha$ and $k_2$, respectively.
	\end{theorem}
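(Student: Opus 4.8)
The plan is to assemble the structural facts already established. By Proposition~\ref{th:normalConnection} and the discussion immediately following it, after possibly shrinking $M$ around the given point we may assume $b_2 \equiv 0$ and $b_1 \neq 0$, together with $\alpha \neq 0$ (this is the hypothesis $A_4 \neq 0$), $E_2(f) \equiv 0$ (see \eqref{eq:alphaNeq0E2fZero}), $a_1 \equiv 0$ (see \eqref{eq:a1Zero}), $a_2 \neq 0$ (this is $\nabla_{E_2} E_2 \neq 0$), and the vanishing relations \eqref{eq:VanishingOfTheDerivativesInTheDirOfE2}. Proposition~\ref{th:metricOnM} then produces local coordinates $(s,t)$ in which $E_1 = \partial/\partial s$, $E_2 = g_{22}^{-1/2}\,\partial/\partial t$ and $a_2 = a_2(s)$.

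\textbf{Step 1: reduce the geometric data to functions of $s$.} Since $E_2 = g_{22}^{-1/2}\,\partial/\partial t$, the condition $E_2(h) = 0$ is equivalent to $\partial h/\partial t = 0$. Applying this to $f$ (via $E_2(f) = 0$) and to $b_1, k_1, k_2, \alpha, a_2, K$ (via \eqref{eq:VanishingOfTheDerivativesInTheDirOfE2}) shows that $f, k_1, k_2, \alpha, a_2, b_1, K$ each depend on $s$ alone, and for any such $h$ one has $E_1(h) = \dot h = \partial h/\partial s$. I will also repeatedly use $k_1 = 2f - k_2$, which comes from $2f = \trace A_3 = k_1 + k_2$.

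\textbf{Step 2: translate the structure equations into the ODE system.} Putting $a_1 = 0$ into \eqref{eq:GaussEquation} gives $K = \dot a_2 - a_2^2$, and combining this with \eqref{eq:GaussianCurvatureFromEquation} after substituting $k_1 = 2f - k_2$ yields the first equation of \eqref{eq:differentialSystemTangentPart}. Equation \eqref{eq:E1alpha}, read with $E_1(\alpha) = \dot\alpha$, is exactly the third equation. Substituting $k_1 = 2f - k_2$ (so that $k_2 - k_1 = 2(k_2 - f)$) into \eqref{eq:E1kappa2} gives the fourth equation. Finally, \eqref{eq:E1MeanCurvature} reads $(k_1 + f)\dot f = -f\alpha b_1$, that is $(3f - k_2)\dot f = -f\alpha b_1$; since $f$, $\alpha$ and $b_1$ are nowhere zero, the right-hand side is nowhere zero, hence $3f - k_2 = k_1 + f$ never vanishes, and dividing by it produces the second equation. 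This nonvanishing also shows the system \eqref{eq:differentialSystemTangentPart} is well posed.

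I do not anticipate a serious obstacle: the substantive geometric work has been concentrated in Lemma~\ref{th:constraintNormalConnection} and Propositions~\ref{th:normalConnection} and~\ref{th:metricOnM}, so the remaining task is bookkeeping — recording which quantities are functions of $s$ only and verifying that $3f - k_2$ does not vanish so that the normalized equation for $\dot f$ is meaningful. The converse direction, recovering such a surface from a solution of \eqref{eq:differentialSystemTangentPart}, belongs to the separate existence statement (Theorem~\ref{th:existenceTheorem}) and is not part of this proof.
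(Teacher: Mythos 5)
Your proposal is correct and follows essentially the same route as the paper's proof: both reduce everything to functions of $s$ via Proposition \ref{th:metricOnM} together with \eqref{eq:alphaNeq0E2fZero}, \eqref{eq:a1Zero} and \eqref{eq:VanishingOfTheDerivativesInTheDirOfE2}, and then read off the four equations from \eqref{eq:GaussEquation} combined with \eqref{eq:GaussianCurvatureFromEquation}, from \eqref{eq:E1MeanCurvature} (noting, as the paper does, that $3f - k_2 = k_1 + f$ cannot vanish since the right-hand side $-f\alpha b_1$ is nowhere zero), and from \eqref{eq:E1alpha} and \eqref{eq:E1kappa2} with $k_1 = 2f - k_2$. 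No gaps.
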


	\begin{proof}
		From \eqref{eq:alphaNeq0E2fZero} and \eqref{eq:VanishingOfTheDerivativesInTheDirOfE2} we obtain that the functions $f$, $k_1$, $k_2$, $\alpha$, $a_2$, $b_1$ and $K$ depend only on the parameter $s$.
		
		Replacing \eqref{eq:GaussianCurvatureFromEquation} and \eqref{eq:a1Zero} in \eqref{eq:GaussEquation} we obtain the first equation of system \eqref{eq:differentialSystemTangentPart}. From \eqref{eq:E1MeanCurvature} we obtain the second equation of the system. We note that $3f - k_2 \neq 0$ since $f$, $\alpha$ and $b_1$ are different from $0$, that is the right hand-side of \eqref{eq:E1MeanCurvature} is different from $0$. The third and last equations of the system are \eqref{eq:E1alpha} and \eqref{eq:E1kappa2}, respectively.
		
		One can check that replacing \eqref{eq:alphaNeq0E2fZero}, \eqref{eq:a1Zero}, \eqref{eq:VanishingOfTheDerivativesInTheDirOfE2} and the result of Proposition \ref{th:normalConnection} in the relations of Propositions \ref{th:propertiesOurSurface}, \ref{th:propertiesBiconservativeSurface} and Lemma \ref{th:constraintNormalConnection} not yet used gives no additional information.
	\end{proof}
	
	\begin{remark}
		As we will see in the following, we do not need the surface to be Weingarten, but the slightly weaker condition that $\grad f$ is parallel to $\grad K$.
	\end{remark}

	In the following we provide a converse of Theorem \ref{th:directTheorem}. For this, we first denote $\uu = a_2$, $\vv = b_1$, $\ww = f$, $\xx = \alpha$ and $\yy = k_2$ and we rewrite \eqref{eq:differentialSystemTangentPart}. Let 
	\[
	\Omega = \left \{ (\uu, \ww, \xx, \yy) \in \mathbb R^* \times (0, \infty) \times \mathbb R^* \times \mathbb R \ \middle | \ 3 \ww - \yy \neq 0 \ \text{and} \ 3 \ww^2 + \yy (2 \ww - \yy) - \xx^2 \neq 0 \right \},
	\]
	and $I$ be a real open interval. We define $F_\vv : I \times \Omega \to \mathbb R^4$ by
	\begin{equation*}
		F_\vv (s, \uu, \ww, \xx, \yy) = 
		\begin{pmatrix}
			\epsilon + \yy ( 2 \ww - \yy) - \xx^2 + \uu^2 \\[5pt]
			\displaystyle - \frac {\ww \xx \vv (s)} {3\ww - \yy} \\[10pt]
			2 \xx \uu + \yy \vv (s) \\[5pt]
			2 \uu (\yy - \ww) - \xx \vv (s)
		\end{pmatrix}
	\end{equation*}
	and it is clear that \eqref{eq:differentialSystemTangentPart} is equivalent to the following first order ODE system
	\begin{equation} \label{eq:differentialSystemTangentPartConverse}
		\dot X (s) = F_\vv (s, X(s)), \quad \text{for any } s,
	\end{equation}
	where $\epsilon \in \mathbb R$, $\vv : I \to \mathbb R^*$ is a smooth arbitrarily fixed function and $X (s) = \left ( \uu (s),  \ww (s),  \xx (s),  \yy (s) \right )$.
	
	Since $F_\vv$ is smooth, given an arbitrary initial condition $(s_0, \uu_0, \ww_0, \xx_0, \yy_0) \in I \times \Omega$, the system of equations \eqref{eq:differentialSystemTangentPartConverse} has a unique solution around $s_0$, for any smooth function $\vv$.
	
	If $(\uu (s), \ww (s), \xx (s), \yy (s))$ is a solution of \eqref{eq:differentialSystemTangentPartConverse}, then 
	\[
	\left ( \tilde \uu \left ( \tilde s \right ) = - \uu \left ( - \tilde s \right ), \tilde \ww \left ( \tilde s \right ) = \ww \left ( - \tilde s \right ), \tilde \xx \left ( \tilde s \right ) = \xx \left ( - \tilde s \right ), \tilde \yy \left ( \tilde s \right ) = \yy \left ( - \tilde s \right ) \right )
	\]
	is a solution of \eqref{eq:differentialSystemTangentPartConverse} associated to $F_{\tilde \vv}$, where $\tilde \vv \left ( \tilde s \right ) = - \vv \left ( - \tilde s \right )$. Further, 
	\[
	(\uu (s), \ww (s), - \xx (s), \yy (s))
	\]
	is a solution of \eqref{eq:differentialSystemTangentPartConverse} associated to $F_{- \vv}$.
	
	The above two properties have natural geometric correspondence and from now on we assume that 
	\[
	\dot \ww > 0 \quad \text{and} \quad \xx > 0.
	\]
	We note that $E_1$ and $E_4$ may change their signs and, in this case,
	\[
	E_1 = \frac {\grad f} {|\grad f|}.
	\]
	Consequently, the domain $\Omega$ becomes
	\[
	\Omega = \left \{ (\uu, \ww, \xx, \yy) \in \mathbb R^* \times (0, \infty) \times (0, \infty) \times \mathbb R \ \middle | \ 3 \ww - \yy \neq 0 \ \text{and} \ 3 \ww^2 + \yy (2 \ww - \yy) - \xx^2 \neq 0 \right \}.
	\]
	
	Starting with a solution of \eqref{eq:differentialSystemTangentPartConverse}, in the next result we provide a way to construct non-CMC biconservative W-surfaces with flat normal bundle and satisfying the additional requirements. Thus, we can say that \eqref{eq:differentialSystemTangentPartConverse} represents (all) the compatibility conditions for this class of biconservative surfaces.
	
	\begin{theorem} \label{th:existenceTheorem}
		Let $\vv : I \to \mathbb R^*$ be a smooth function and consider $(\uu, \ww, \xx, \yy)$ a solution of \eqref{eq:differentialSystemTangentPartConverse} defined on $I$. On $I \times \mathbb R$ we define the metric $g (s, t) = ds^2 + g_{22} (s) dt^2$, for any $(s, t) \in I \times \mathbb R$, where $g_{22}$ is a positive solution of 
		\[
		\frac {d g_{22}} {ds} = - 2 \uu g_{22}.
		\]
		Then, there exists a biconservative immersion $\varphi : I \times \mathbb R \to N^4 (\epsilon)$ such that 
		\begin{enumerate} [label = \alph*)]
			\item $3 f^2 + K - \epsilon \neq 0$, $H \neq 0$, $\grad f \neq 0$, $A_3 \neq f \id$, $A_4 \neq 0$ and $\nabla _{E_2} E_2 \neq 0$ at any point of $I \times \mathbb R$; \label{item:existenceA}
			\item $\varphi$ has flat normal bundle and $\grad f$ is parallel to $\grad K$;
			\label{item:existenceB}
			\item $\nabla ^\perp _{E_1} E_3 = \vv E_4 \neq 0$ at any point and $\nabla ^\perp _{E_2} E_3 = 0$ on $I \times \mathbb R$, thus $\varphi$ is non-PNMC. \label{item:existenceC}
		\end{enumerate}
		Moreover, we have $f = f (s) = \ww (s)$, $k_2 = k_2 (s) = \yy (s)$, $\alpha = \alpha (s) = \xx (s)$, $a_2 = a_2 (s) = \uu (s)$ and $b_1 = b_1 (s) = \vv (s)$.
	\end{theorem}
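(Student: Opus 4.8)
The strategy is to construct the immersion via the fundamental theorem of submanifolds: reverse-engineer a second fundamental form $B$ and a normal connection $\nabla^\perp$ on the abstract Riemannian surface $(I\times\mathbb R, g)$, and then verify that the Gauss, Codazzi, and Ricci equations hold, which guarantees the existence (locally, up to isometries of $N^4(\epsilon)$) of the desired isometric immersion. Concretely, on $(I\times\mathbb R,g)$ with $g=ds^2+g_{22}(s)\,dt^2$, set $E_1=\partial/\partial s$ and $E_2=(1/\sqrt{g_{22}})\,\partial/\partial t$; one checks immediately from $dg_{22}/ds=-2\uu g_{22}$ that the Levi-Civita connection is exactly \eqref{eq:ConnectionFormSurface} with $a_1=0$ and $a_2=\uu(s)$, and hence the Gaussian curvature is $K=K(s)=\dot\uu-\uu^2$ by \eqref{eq:GaussEquation}. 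Then define an abstract rank-$2$ bundle with orthonormal frame $\{E_3,E_4\}$, prescribe the normal connection by \eqref{eq:ConnectionFormNormal} with $b_1=\vv(s)$ and $b_2=0$ (so $\nabla^\perp_{E_1}E_3=\vv E_4$, $\nabla^\perp_{E_2}E_3=0$), and define $B$ by \eqref{eq:SecondFundamentalForm} with $k_1=2\ww-\yy$, $k_2=\yy$, $\alpha=\xx$. Finally set $H=\frac12(k_1+k_2)E_3=\ww E_3$, so $f=\ww$.

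The key steps in order: (1) record the connection data and confirm $a_1=0$, $a_2=\uu$, together with $K=\dot\uu-\uu^2$; (2) verify the Ricci equation \eqref{eq:RicciEquationInSpaceForms} — since $b_2=0$ one has $R^\perp(E_1,E_2)E_3=0$ by the computation in Proposition \ref{th:propertiesOurSurface} provided $E_1(b_2)-E_2(b_1)=a_1b_1+a_2b_2$ holds, and here this reads $0-0=0$, so the bundle is flat and $[A_3,A_4]=0$ is automatic from \eqref{eq:ShapeOperator3}–\eqref{eq:ShapeOperator4}; (3) verify the Gauss equation \eqref{eq:GaussianCurvatureFromEquation}, i.e. $K=\epsilon+k_1k_2-\alpha^2$: substituting $k_1k_2=(2\ww-\yy)\yy$ and $\alpha^2=\xx^2$, this is $\dot\uu-\uu^2=\epsilon+\yy(2\ww-\yy)-\xx^2$, which is precisely the first equation of \eqref{eq:differentialSystemTangentPartConverse}; (4) verify the four Codazzi equations \eqref{eq:E1kappa2}–\eqref{eq:E2alpha}: with $a_1=0$, $b_2=0$ and all functions depending only on $s$, equations \eqref{eq:E2kappa1} and \eqref{eq:E2alpha} collapse to $0=0$, while \eqref{eq:E1kappa2} becomes $\dot k_2=a_2(k_2-k_1)-\alpha b_1$; using $k_2-k_1=2(\yy-\ww)$ this is the fourth equation of the system, and \eqref{eq:E1alpha} becomes $\dot\alpha=2\alpha a_2+k_2b_1$, the third equation; (5) having checked all the Gauss–Codazzi–Ricci equations, invoke the fundamental theorem of submanifolds in space forms (e.g. \cite{DajczerTojeiroBook2019}) to obtain the immersion $\varphi:I\times\mathbb R\to N^4(\epsilon)$ with the prescribed $B$, $\nabla^\perp$; (6) check biconservativity by verifying \eqref{eq:E1MeanCurvature}–\eqref{eq:E2MeanCurvature}: the second says $(k_2+f)E_2(f)=f\alpha b_2=0$, which holds since $E_2(f)=\dot\ww\cdot0=0$ (as $f=f(s)$), and the first says $(k_1+f)\dot f=-f\alpha b_1$, i.e. $(2\ww-\yy+\ww)\dot\ww=-\ww\xx\vv$, which rearranges to $\dot\ww=-\ww\xx\vv/(3\ww-\yy)$, the second equation of the system — note $3\ww-\yy\neq0$ is guaranteed by $(\uu,\ww,\xx,\yy)\in\Omega$; (7) check the remaining non-degeneracy claims in \ref{item:existenceA}: $H\neq0$ since $\ww>0$; $\grad f\neq0$ since $\dot\ww>0$ (so also $E_1=\grad f/|\grad f|$); $A_3\neq f\,\id$ since $k_1-k_2=2(\ww-\yy)$ and $\ww-\yy=0$ would force, via $3\ww-\yy\neq0$ i.e. $2\ww\neq0$, no contradiction directly — here one uses instead that $k_1\neq k_2$ is equivalent to $\ww\neq\yy$, and from the Gauss relation $f^2-K+\epsilon=\big(\tfrac{k_1-k_2}2\big)^2+\alpha^2=(\ww-\yy)^2+\xx^2>0$ since $\xx>0$; $A_4\neq0$ since $\alpha=\xx>0$; $\nabla_{E_2}E_2=a_2E_1=\uu E_1\neq0$ since $\uu\in\mathbb R^*$; and $3f^2+K-\epsilon\neq0$ since $3\ww^2+\yy(2\ww-\yy)-\xx^2\neq0$ on $\Omega$ combined with $K=\epsilon+k_1k_2-\alpha^2$ giving $3f^2+K-\epsilon=3\ww^2+(2\ww-\yy)\yy-\xx^2$; (8) finally, \ref{item:existenceB} follows from flatness of $\nabla^\perp$ (already established) together with \eqref{eq:WSurfaceEquaition}, which holds because $E_2(f)=0$ forces $\langle\grad f,(\grad K)^\perp\rangle=-E_2(f)E_1(K)+E_1(f)E_2(K)$ and $E_2(K)=\dot K\cdot0=0$; thus $M$ is Weingarten (with $W(f,K)$ obtained by eliminating $s$, valid since $\dot f=\dot\ww>0$ so $f$ is a local coordinate).

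The main obstacle I anticipate is step (4)–(6): bookkeeping that \emph{every one} of the nine scalar equations coming from Gauss, Codazzi, Ricci, plus the two biconservativity equations, is either trivially satisfied (because $a_1=0$, $b_2=0$, and all data are functions of $s$ alone) or is exactly one of the four ODEs in \eqref{eq:differentialSystemTangentPartConverse} after the substitution $k_1=2\ww-\yy$. This is essentially the observation, already signalled in the proof of Theorem \ref{th:directTheorem}, that the system \eqref{eq:differentialSystemTangentPart} was obtained by extracting precisely the non-redundant content of all these equations; here we run that argument in reverse and must confirm nothing was lost. A secondary subtlety is ensuring the solution stays in $\Omega$ on the whole interval $I$ — but this is part of the hypothesis ($(\uu,\ww,\xx,\yy)$ is a solution \emph{with values in $\Omega$}), so no extra work is needed. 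The identification of the geometric quantities of $\varphi$ with $(\ww,\yy,\xx,\uu,\vv)$ in the last sentence of the statement is then immediate from the construction.
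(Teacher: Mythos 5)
Your overall route is the paper's: prescribe the flat normal connection and the second fundamental form on $(I\times\mathbb R,g)$ with $a_1=0$, $a_2=\uu$, $b_1=\vv$, $b_2=0$, $k_1=2\ww-\yy$, $k_2=\yy$, $\alpha=\xx$, check that Gauss, Codazzi and Ricci reduce exactly to \eqref{eq:differentialSystemTangentPartConverse} (the paper verifies Codazzi and biconservativity by direct covariant-derivative computations rather than by quoting \eqref{eq:E1kappa2}--\eqref{eq:E2alpha} and \eqref{eq:E1MeanCurvature}--\eqref{eq:E2MeanCurvature}, but the content is identical), and invoke the Fundamental Theorem of Submanifolds on the simply connected domain $I\times\mathbb R$. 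All of that is sound.

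There is, however, one genuine gap: your verification of $A_3\neq f\,\id$ in item \ref{item:existenceA}. This condition is equivalent to $k_1\neq k_2$, i.e.\ $\ww\neq\yy$, and it is \emph{not} encoded in the domain $\Omega$ (only $3\ww-\yy\neq0$ is). Your argument invokes $f^2-K+\epsilon=(\ww-\yy)^2+\xx^2>0$, but this inequality holds automatically because $\xx>0$, whether or not $\ww=\yy$, so it proves nothing about $k_1\neq k_2$; in effect you observe that there is "no contradiction directly" and then stop. The missing argument is the one the paper gives: if $\ww=\yy$ on an open subinterval, then there $\dot\ww=\dot\yy$, while the second equation of \eqref{eq:differentialSystemTangentPartConverse} gives $\dot\ww=-\ww\xx\vv/(3\ww-\yy)=-\xx\vv/2$ and the fourth gives $\dot\yy=2\uu(\yy-\ww)-\xx\vv=-\xx\vv$; comparing forces $\xx\vv=0$, contradicting $\xx>0$ and $\vv\neq0$. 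Hence $\ww\neq\yy$ on an open dense subset of $I$, and, after restricting $I$ (as the paper explicitly does), $A_3\neq f\,\id$ holds at every point. Without this step the conclusion $A_3\neq f\,\id$ at any point of $I\times\mathbb R$ is unjustified. (A very minor further point: you state the fundamental theorem "locally, up to isometries"; since $I\times\mathbb R$ is simply connected one gets, as the paper notes, a globally defined immersion, which is what the statement asserts.)
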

	
	\begin{proof}
		For simplicity, we denote $M = I \times \mathbb R$. Taking into account Proposition \ref{th:metricOnM}, we define the orthonormal frame field tangent to $M$ by 
		\[
		E_1 = \frac{\partial}{\partial s} \quad \text{and} \quad E_2 = \frac{1}{\sqrt{g_{22}}}\frac{\partial}{\partial t}.
		\]
		Further, let $\Upsilon = M^2 \times \mathbb{R}^2 $ be the trivial vector bundle of rank $2$ over $M$. We define $\sigma_3$ and $\sigma_4$ by
		\[
		\sigma_3(p) = (p, (1, 0)) \quad \text{and} \quad \sigma_4(p) = (p, (0, 1)), \quad \text{for any } p \in M,
		\]
		the metric $h$ on $\Upsilon$ by
		\begin{equation*}\label{eq:MetricUpsilon}
			h(\sigma_\alpha, \sigma_\beta) = \langle \sigma_\alpha, \sigma_\beta \rangle = \delta_{\alpha \beta}, \quad \text{for any } \alpha, \beta \in \{ 3, 4 \}, 
		\end{equation*}
		and the connection $\nabla^\Upsilon$ on $\Upsilon$ by
		\begin{equation}\label{eq:ConnectionUpsilon}
			\left \{
			\begin{array}{l}
				\nabla_{E_1}^\Upsilon \sigma_3 = \vv \sigma_4 \\[5pt]
				\nabla_{E_1}^\Upsilon \sigma_4 = -\vv \sigma_3 \\[5pt]
				\nabla_{E_2}^\Upsilon \sigma_3 = \nabla_{E_2}^\Upsilon \sigma_4 = 0
			\end{array}
			\right..
		\end{equation}
		The sections $\sigma_3$ and $\sigma_4$ form the canonical global frame field of $\Upsilon$.
		
		It is easy to check that the pair $\left ( \nabla^\Upsilon, h \right )$ is a Riemannian structure, that is
		\begin{equation*} \label{eq:RiemannianStructure}
			X\left(\langle \sigma, \rho \rangle\right) = \left \langle \nabla_X^\Upsilon \sigma, \rho \right \rangle + \left \langle  \sigma, \nabla_X^\Upsilon \rho \right \rangle, \quad \text{for any } \sigma, \rho \in C(\Upsilon). 
		\end{equation*}
		Now, we compute the curvature tensor field $R^\Upsilon$ on $\Upsilon$. From the definition of $R^\Upsilon$, we have
		\begin{align*}
			R^\Upsilon (E_1, E_2) \sigma_3 = & \nabla_{E_1}^\Upsilon \nabla_{E_2}^\Upsilon \sigma_3 - \nabla_{E_2}^\Upsilon \nabla_{E_1}^\Upsilon \sigma_3 - \nabla_{[E_1, E_2]}^\Upsilon \sigma_3\\
			= & - \nabla_{E_2}^\Upsilon \left( \vv \sigma_4 \right) -  \nabla_{\left [ \frac{\partial}{\partial s}, \frac 1 {\sqrt{g_{22}}} \frac {\partial} {\partial t} \right ]}^\Upsilon \sigma_3\\
			= & - E_2 (\vv) \sigma_4 - \vv \nabla_{E_2}^\Upsilon \sigma_4 - \nabla_{\left ( \frac {\partial} {\partial s} \left ( \frac 1 {\sqrt{g_{22}}} \right ) \frac {\partial} {\partial t} + \frac 1 {\sqrt{g_{22}}} \left [\frac {\partial} {\partial s}, \frac {\partial} {\partial t} \right ] \right )}^\Upsilon \sigma_3\\
			= & - \frac 1 {\sqrt{g_{22}}} \frac{\partial \vv} {\partial t} \sigma_4 + \frac {\dot g_{22}} {2 g_{22} \sqrt {g_{22}}}  \nabla_{\frac {\partial} {\partial t}}^\Upsilon \sigma_3\\
			= & \frac {\dot g_{22}} {2 g_{22}} \nabla _{E_2} ^\Upsilon \sigma_3 \\ 
			= & 0,   
		\end{align*}
		and
		\begin{align*}
			R^\Upsilon (E_1, E_2) \sigma_4 = & \nabla_{E_1}^\Upsilon \nabla_{E_2}^\Upsilon \sigma_4 - \nabla_{E_2}^\Upsilon \nabla_{E_1}^\Upsilon \sigma_4 - \nabla_{[E_1, E_2]}^\Upsilon \sigma_4\\
			= & \nabla_{E_2}^\Upsilon \left( \vv \sigma_3 \right) -  \nabla_{\left [ \frac{\partial}{\partial s}, \frac 1 {\sqrt{g_{22}}} \frac {\partial} {\partial t} \right ]}^\Upsilon \sigma_4\\
			= & E_2 (\vv) \sigma_3 + \vv \nabla_{E_2}^\Upsilon \sigma_3  -  \nabla_{\left ( \frac {\partial} {\partial s} \left ( \frac 1 {\sqrt{g_{22}}} \right ) \frac {\partial} {\partial t} + \frac 1 {\sqrt{g_{22}}} \left [\frac {\partial} {\partial s}, \frac {\partial} {\partial t} \right ] \right )}^\Upsilon \sigma_4\\
			= & \frac 1 {\sqrt{g_{22}}} \frac{\partial \vv} {\partial t} \sigma_3 + \frac {\dot g_{22}} {2 g_{22} \sqrt {g_{22}}} \nabla_{\frac {\partial} {\partial t}}^\Upsilon \sigma_4\\
			= & \frac {\dot g_{22}} {2 g_{22}} \nabla _{E_2} ^\Upsilon \sigma_4 \\
			= & 0.   
		\end{align*}
		Therefore, 
		\[
		R^\Upsilon = 0 \quad \text{on } M.
		\]
		Now, we define $B^\Upsilon : C(TM) \times C(TM) \to C(\Upsilon)$ a symmetric $C^\infty (M)$-bilinear map by
		\begin{equation}\label{eq:SecondFondamentalFormUpsilon}
			\left\lbrace 
			\begin{array}{l}
				B^\Upsilon (E_1, E_1) = (2 \ww - \yy) \sigma_3 + \xx \sigma_4\\ [5pt]
				B^\Upsilon (E_1, E_2) = 0 \\ [5pt]
				B^\Upsilon (E_2, E_2) = \yy \sigma_3 - \xx \sigma_4
			\end{array}
			\right. .
		\end{equation}
		Consider $A_\alpha^\Upsilon \in C(\End(TM))$ given by
		\begin{equation*}\label{eq:ShapeOperatorUpsilonDefinition}
			\langle A_\alpha^\Upsilon E_i, E_j \rangle  = \langle B^\Upsilon (E_i, E_j), \sigma_\alpha \rangle,
		\end{equation*}
		for any $i, j \in \{1, 2\}$ and any $\alpha \in \{3, 4\}$.
		
		The Christofell symbols of the metric $g$ are given by
		\begin{equation*}\label{eq:SymbolsUpsilon}
			\left\lbrace 
			\begin{array}{l}
				\Gamma_{11}^1 = \Gamma_{11}^2 = \Gamma_{12}^1 = \Gamma_{22}^2 = 0\\[5pt]
				\Gamma_{22}^1 = \uu g_{22} \\[5pt]
				\Gamma_{12}^2 = -\uu
			\end{array}
			\right. .
		\end{equation*}
		Now, we can compute the Levi-Civita connection of $M$
		\begin{align*}
			\nabla_{E_1} E_1 = & \nabla_{ \frac{\partial}{\partial s}}\frac{\partial}{\partial s} = \Gamma_{11}^1\frac{\partial}{\partial s} + \Gamma_{11}^2 \frac{\partial}{\partial t} = 0,\\
			\nabla_{E_1} E_2 = & \nabla_{ \frac{\partial}{\partial s}} \left( \frac{1}{\sqrt{g_{22}}} \frac{\partial}{\partial t}\right)\\
							 = & \frac{\partial}{\partial s}\left(\frac{1}{\sqrt{g_{22}}}\right)\frac{\partial}{\partial t} + \frac{1}{\sqrt{g_{22}}} \nabla_{ \frac{\partial}{\partial s}} \frac{\partial}{\partial t}\\
							 = & -\frac{\dot g_{22}} {2 g_{22} \sqrt {g_{22}}} \frac{\partial}{\partial t} + \frac{1}{\sqrt{g_{22}}} \left(\Gamma_{12}^1\frac{\partial}{\partial s} + \Gamma_{12}^2\frac{\partial}{\partial t}\right)\\
							 = & \uu E_2 - \uu E_2 = 0,\\
			\nabla_{E_2} E_1 = & \frac{1}{\sqrt{g_{22}}} \nabla_{ \frac{\partial}{\partial t}} \frac{\partial}{\partial s} = \frac{1}{\sqrt{g_{22}}} \left(\Gamma_{12}^1 \frac{\partial}{\partial s} + \Gamma_{12}^2 \frac{\partial}{\partial t} \right) \\
							 = & - \frac \uu {\sqrt{g_{22}}} \frac{\partial}{\partial t} = - \uu E_2, \\
			\nabla_{E_2} E_2 = & \frac 1 {g_{22}} \nabla _{\frac \partial {\partial t}} \frac \partial {\partial t} \\
							 = & \frac{1}{g_{22}} \left( \Gamma_{22}^1 \frac{\partial}{\partial s} + \Gamma_{22}^2 \frac{\partial}{\partial t}\right) = \uu \frac{\partial}{\partial s} = \uu E_1.
		\end{align*}
		By direct computations, the Gaussian curvature of $M$ is given by
		\begin{equation*}
			K = \left \langle R (E_1, E_2) E_2, E_1 \right \rangle = \dot \uu - \uu^2.
		\end{equation*}	
		Using the first equations of \eqref{eq:differentialSystemTangentPartConverse}, we obtain that 
		\begin{equation*}
			K = \epsilon + \yy (2 \ww - \yy) - \xx^2.
		\end{equation*}
		Now, we check if the fundamental equations are satisfied. For the Gauss equation \eqref{eq:GaussEquationInSpaceForms} we have 
		\begin{align*}
			\epsilon = & \left \langle R (E_1, E_2) E_2, E_1 \right \rangle - \left \langle B^\Upsilon (E_1, E_1), B^\Upsilon (E_2, E_2) \right \rangle + \left \langle B^\Upsilon (E_1, E_2), B^\Upsilon (E_1, E_2) \right \rangle \\
			 		 = & K - \left \langle (2 \ww - \yy) \sigma_3 + \xx \sigma_4, \yy \sigma_3 - \xx \sigma_4 \right \rangle \\
					 = & K - \yy (2 \ww - \yy) + \xx^2,
		\end{align*}
		which is trivially satisfied.
		
		Next, we study the Codazzi equation \eqref{eq:CodazziEquationInSpaceForms}. Choosing $X = Z = E_1$ and $Y =  E_2$ and taking into account \eqref{eq:ConnectionUpsilon} and \eqref{eq:SecondFondamentalFormUpsilon}, we obtain
		\begin{align*}
			\left ( \nabla ^\Upsilon _{E_1} B^\Upsilon \right ) (E_2, E_1) =& \nabla ^\Upsilon _{E_1} B^\Upsilon (E_2, E_1) - B^\Upsilon \left ( \nabla _{E_1} E_2, E_1 \right ) - B^\Upsilon \left ( E_2, \nabla _{E_1} E_1 \right ) \\
			= & 0
		\end{align*}
		and
		\begin{align*}
			\left ( \nabla ^\Upsilon _{E_2} B^\Upsilon \right ) (E_1, E_1) =& \nabla ^\Upsilon _{E_2} B^\Upsilon (E_1, E_1) - 2 B ^\Upsilon \left ( \nabla _{E_2} E_1, E_1 \right ) \\
			=& \nabla ^\Upsilon _{E_2} \bigl ( (2 \ww - \yy) \sigma_3 + \xx \sigma_4 \bigr ) - 2 B ^\Upsilon (- \uu E_2, E_1) \\
			=& E_2 (2 \ww - \yy) \sigma_3 + (2 \ww - \yy) \nabla ^\Upsilon _{E_2} \sigma_3 + E_2 (\xx) \sigma_4 + \xx \nabla ^\Upsilon _{E_2} \sigma_4 \\
			=& 0.
		\end{align*}
		Thus, $\left ( \nabla ^\Upsilon _{E_1} B^\Upsilon \right ) (E_2, E_1) = \left ( \nabla ^\Upsilon _{E_2} B ^\Upsilon \right ) (E_1, E_1)$.
		
		Choosing $X = E_1$ and $Y = Z = E_2$ and taking into account \eqref{eq:ConnectionUpsilon}, \eqref{eq:SecondFondamentalFormUpsilon}, the third and the fourth equations of \eqref{eq:differentialSystemTangentPartConverse}, we have
		\begin{align*}
			\left ( \nabla ^\Upsilon _{E_1} B ^\Upsilon \right ) (E_2, E_2) =& \nabla ^\Upsilon _{E_1} B ^\Upsilon (E_2, E_2) - 2 B ^\Upsilon \left ( \nabla _{E_1} E_2, E_2 \right ) \\
			=& \nabla ^\Upsilon _{E_1} (\yy \sigma_3 - \xx \sigma_4) \\
			=& E_1 (\yy) \sigma_3 + \yy \nabla ^\Upsilon _{E_1} \sigma_3 - E_1 (\xx) \sigma_4 - \xx \nabla ^\Upsilon _{E_1} \sigma_4 \\
			=& \dot \yy \sigma_3 + \yy \vv \sigma_4 - \dot \xx \sigma_4 + \xx \vv \sigma_3 \\
			=& \left ( \dot \yy + \xx \vv \right ) \sigma_3 - \left ( \dot \xx - \yy \vv \right ) \sigma_4 \\
			=& 2 \uu (\yy - \ww) \sigma_3 - 2 \uu \xx \sigma_4
		\end{align*}
		and 
		\begin{align*}
			\left ( \nabla ^\Upsilon _{E_2} B ^\Upsilon \right ) (E_1, E_2) =& \nabla ^\Upsilon _{E_2} B ^\Upsilon (E_1, E_2) - B ^\Upsilon \left ( \nabla _{E_2} E_1, E_2 \right ) - B ^\Upsilon \left ( E_1, \nabla _{E_2} E_2 \right ) \\
			=& \uu (\yy \sigma_3 - \xx \sigma_4) - \uu \bigl ( (2 \ww - \yy) \sigma_3 + \xx \sigma_4 \bigr ) \\
			=& 2 \uu (\yy - \ww) \sigma_3 - 2 \uu \xx \sigma_4.
		\end{align*}
		Thus, $\left ( \nabla ^\Upsilon _{E_1} B ^\Upsilon \right ) (E_2, E_2) = \left ( \nabla ^\Upsilon _{E_2} B ^\Upsilon \right ) (E_1, E_2)$. Therefore, the Codazzi equation is satisfied.
		
		It remains to check if the Ricci equation \eqref{eq:RicciEquationInSpaceForms} is satisfied. Since $R^\Upsilon = 0$, the Ricci equation is equivalent to
		\begin{equation*}
			A^\Upsilon_3 \circ A^\Upsilon_4 = A^\Upsilon_4 \circ A^\Upsilon_3.
		\end{equation*}
		Using the definition of $A^\Upsilon_3$ and $A^\Upsilon_4$ one can easily check that this relation holds.
		
		Since the Gauss, Codazzi and Ricci equations are formally satisfied and $M$ is simply connected, from the Fundamental Theorem of Submanifolds (for example, see \cite{DajczerTojeiroBook2019}), we conclude that there exists a unique globally defined isometric immersion $\varphi : M^2 \to N^4(\epsilon)$ and a vector bundle isometry $\phi: \Upsilon \to N_{\varphi} M$ such that
		\[
		\nabla^\perp \phi = \phi \nabla^\Upsilon \quad \text{and} \quad B = \phi \circ B^\Upsilon.
		\]
		Now we have to check if $\varphi$ has the properties \ref{item:existenceA}, \ref{item:existenceB} and \ref{item:existenceC}. 
		
		First, note that 
		\[
		H^\Upsilon = \frac 1 2 \trace B^\Upsilon = \ww \sigma_3 \neq 0, \quad \text{at any point}.
		\]
		From the above formula we obtain that $f = \ww$ and, as $\dot \ww > 0$, we deduce that
		\[
		\grad f = \grad \ww = E_1 (\ww) E_1 + E_2 (\ww) E_2 = \dot \ww \frac \partial {\partial s} \neq 0, \quad \text{at any point}.
		\]
		Moreover, $E_1 = \grad f / |\grad f|$.
		
		Relation $3 f^2 + K - \epsilon \neq 0$, which is the first condition we must check in \ref{item:existenceA}, is equivalent to
		\begin{equation*}
			3 \ww^2 + \yy (2 \ww - \yy) - \xx^2 \neq 0.
		\end{equation*} 
		But this is implicitly ensured by the definition of the domain $\Omega$ of the function $F_\vv$.
			
		Now, we check if $A^\Upsilon _3 \neq \ww \id$. Suppose by way of contradiction that $A^\Upsilon _3 = \ww \id$ on an open subset $U$ of $M$. Then, we have $\ww = \yy$ on $U$. Using the second and the last equations of \eqref{eq:differentialSystemTangentPartConverse}, we obtain that $\xx \vv = 0$ on $U$, which is a contradiction since neither $\xx$, nor $\vv$ can vanish on $M$. Thus, $A^\Upsilon _3 \neq \ww \id$ at any point of an open and dense subset of $M$. Eventually restricting $I$, we obtain that $A^\Upsilon _3 \neq \ww \id$ at any point of $M = I \times \mathbb R$.
		
		Now, we check if $\varphi$ is biconservative. From \eqref{eq:biharmonicEquationTangentPart} we have
		\begin{align*}
			    & 2 \left ( A^\Upsilon _{\nabla^\Upsilon _{E_1} H^\Upsilon} E_1 + A^\Upsilon _{\nabla^\Upsilon _{E_2} H^\Upsilon} E_2 \right ) + \grad \ww^2 \\
			  = & 2 \left ( A^\Upsilon _{E_1 (\ww) \sigma_3 + \ww \nabla^\Upsilon _{E_1} \sigma_3} E_1 + A^\Upsilon _{E_2 (\ww) \sigma_3 + \ww \nabla^\Upsilon _{E_2} \sigma_3} E_2 \right ) \\
			    & + E_1 \left ( \ww^2 \right ) E_1 + E_2 \left ( \ww^2 \right ) E_2 \\
			  = & 2 \left ( \dot \ww A^\Upsilon_3 E_1 + \ww \vv A^\Upsilon_4 E_1 \right ) + 2 \ww \dot \ww E_1 \\
			  = & \left ( 2 \dot \ww (2 \ww - \yy) + 2 \xx \vv \ww + 2 \ww \dot \ww \right ) E_1 \\
			  = & 0
		\end{align*}
		in virtue of the second equation of \eqref{eq:differentialSystemTangentPartConverse}.
		
		Using \eqref{eq:SecondFondamentalFormUpsilon}, we obtain that $A^\Upsilon _4 E_1 = \xx E_1$ and $A^\Upsilon _4 E_2 = - \xx E_2$. Since the function $\xx$ is positive, we deduce that $A^\Upsilon _4 \neq 0$ at any point of $M$.
		
		Since the function $\uu$ cannot vanish, we have $\nabla _{E_2} E_2 = \uu E_1 \neq 0$ at any point of $M$. 
		
		It is then straightforward to check that $\varphi$ has the properties \ref{item:existenceB} and \ref{item:existenceC} of the Theorem \ref{th:existenceTheorem}.
	\end{proof}
	
	\begin{remark}
		Fixing $\vv$ corresponds to prescribing the connection in the normal bundle. Thus, Theorem \ref{th:existenceTheorem} can be seen as an existence result for non-CMC biconservative W-surfaces with flat normal bundle when we prescribe the normal connection.
	\end{remark}

	Theorem \ref{th:existenceTheorem} assures that any solution of \eqref{eq:differentialSystemTangentPartConverse} provides a non-CMC biconservative W-surface with flat normal bundle in $N^4 (\epsilon)$. Consequently, constructing examples of such biconservative surfaces is equivalent to finding solutions of \eqref{eq:differentialSystemTangentPartConverse}. In the following, we present a particular solution of \eqref{eq:differentialSystemTangentPartConverse} which has $\yy = 0$. This solution is a reminiscence of what happens in the case of biconservative hypersurfaces since $\grad f$ is now an eigenvector of $A_3$ corresponding to the eigenvalue $2f$.
	
	\begin{theorem} \label{th:exampleYZero}
		Let $\varphi : M^2 \to N^4 (\epsilon)$ be a non-CMC biconservative W-surface with flat normal bundle. Assume that $3 f^2 + K - \epsilon \neq 0$, $\nabla^\perp E_3 \neq 0$, $A_4 \neq 0$ and $\nabla_{E_2} E_2 \neq 0$ at any point. Then, $M$ satisfies $A_3 (\grad f) = 2 f \grad f$ if and only if, locally, 
		\begin{equation} \label{eq:vSolutionYEqualsZeroInF}
			\vv = - \frac {3 c^3 \dot f} {f^3}
		\end{equation}
		and
		\begin{equation} \label{eq:solutionYEqualsZeroInF}
			\left \{
			\begin{array}{l}
				\displaystyle \uu = \frac 3 2 \frac {\dot f} f \\[10pt]
				\ww = f \\[5pt]
				\xx = c^{-3} f^3 \\[5pt]
				\yy = 0
			\end{array}
			\right .,
		\end{equation}
		where $c$ is a positive constant and $f$ is a positive solution of 
		\begin{equation} \label{eq:secondOrderDifferentialEquationMeanCurvatureCaseYZero}
			\ddot f f - \frac 5 2 \dot f^2 - \frac 2 3 \epsilon f^2 + \frac 2 3 c^{-6} f^8 = 0
		\end{equation}
		such that $\dot f > 0$.
	\end{theorem}
	
	\begin{proof}
		We want to find a solution of \eqref{eq:differentialSystemTangentPartConverse} which satisfies $\yy = 0$, that is we want a solution of 
		\begin{equation} \label{eq:differentialSystemYZero}
			\left \{
			\begin{array}{l}
				\dot \uu = \epsilon - \xx^2 + \uu^2 \\[5pt]
				\displaystyle \dot \ww = - \frac {\xx \vv} 3 \\[10pt]
				\dot \xx = 2 \uu \xx \\[5pt]
				0 = - 2 \uu \ww - \xx \vv 
			\end{array}
			\right ..
		\end{equation}
		We write $\uu$ as $\uu = \dot {\bar \U}$. From the third equation of \eqref{eq:differentialSystemYZero} we obtain 
		\begin{equation*}
			\frac d {ds} (\ln \xx) = 2 \uu,
		\end{equation*}
		which implies that $\xx = e^{c_1} e^{2 \bar \U}$, where $c_1 \in \mathbb R$. Redenoting $c_1= e^{c_1} > 0$, we find that
		\begin{equation*}
			\xx = c_1 e^{2 \bar \U}.
		\end{equation*}
		Replacing $\uu = \dot {\bar \U}$ and $\xx$ in the first equation of \eqref{eq:differentialSystemYZero}, we deduce that $\bar \U$ satisfies 
		\begin{equation*}
			\ddot {\bar \U} = \epsilon - c_1^2 e^{4 \bar \U} + \dot {\bar \U}^2.
		\end{equation*}
		Using the second and the last equations of \eqref{eq:differentialSystemYZero}, we obtain that
		\begin{equation*}
			\dot \ww = \frac 2 3 \uu \ww,
		\end{equation*}
		which implies
		\begin{equation*}
			\ww = c_2 e^{\frac 2 3 \bar \U},
		\end{equation*} 
		where $c_2$ is a positive real constant.
		
		From the fourth equation of \eqref{eq:differentialSystemYZero} we get 
		\begin{equation*}
			\vv = - \frac {2 c_2} {c_1} \dot {\bar \U} e^{- \frac 4 3 \bar \U}.
		\end{equation*}
		Denoting $\U = \bar \U + (\ln c_1) / 2$ and putting $c = c_2 / \sqrt[3] {c_1}$, we obtain
		\begin{equation} \label{eq:vSolutionYEqualsZeroInQ}
			\vv = - 2 c \dot \U e^{- \frac 4 3 \U}
		\end{equation}
		and
		\begin{equation} \label{eq:solutionYEqualsZeroInQ}
			\left \{
			\begin{array}{l}
				\uu = \dot \U \\[5pt]
				\ww = c e^{\frac 2 3 \U} \\[5pt]
				\xx = e^{2 \U} \\[5pt]
				\yy = 0 
			\end{array}
			\right .,
		\end{equation}
		where $c$ is a positive real constant and $\U$ is a solution of 
		\begin{equation} \label{eq:solutionYEqualsZeroSecondOrderODEInQ}
			\ddot \U = \epsilon - e^{4 \U} + \dot \U^2
		\end{equation}
		such that $\dot \U > 0$ at any point.
		
		In the case of PNMC biconservative surfaces it is natural to express the functions $\uu$, $\vv$, $\ww$, $\xx$ and $\yy$ in terms of the mean curvature function $f = f(s)$, see \cite{NistorOniciucTurgaySen2023}, \cite{NistorRusu2024} and \cite{SenTurgay2018}. 
		
		For this, we denote $f = c e^{\frac 2 3 \U}$. Differentiating this relation, we get
		\begin{equation*}
			\dot \U = \frac 3 2 \frac {\dot f} f.
		\end{equation*}
		Then, the conclusion follows.
	\end{proof}
	
	\begin{remark}
		Using standard methods, we can solve the second order ODE \eqref{eq:solutionYEqualsZeroSecondOrderODEInQ} and find $\U = \U(s)$. Then, replacing $\U$ in \eqref{eq:vSolutionYEqualsZeroInQ} and \eqref{eq:solutionYEqualsZeroInQ}, we find explicit formulas for the functions $\uu$, $\vv$, $\ww$, $\xx$ and $\yy$ described in Theorem \ref{th:exampleYZero}. For example, if $\epsilon = 0$, then $\U(s) = \ln \sqrt {S (s)}$,
		\begin{equation*}
			\vv(s) = \pm 2 c (\pm \mathcal A s + \mathcal B) S^{\frac 1 3} (s)
		\end{equation*}
		and
		\begin{equation*}
			\left \{
			\begin{array}{l}
				\uu (s) = \mp (\pm \mathcal A s + \mathcal B) S (s) \\[5pt]
				\ww (s) = c S^{\frac 1 3} (s) \\[5pt]
				\xx (s) = S (s) \\[5pt]
				\yy (s) = 0
			\end{array}
			\right ., \quad \text{for any } s,
		\end{equation*}
		where $\mathcal A > 0$, $\mathcal B \in \mathbb R$ and 
		\begin{equation*}
			S (s) = \frac {\mathcal A} {1 + (\pm \mathcal A s + \mathcal B)^2}.
		\end{equation*}
		If $\epsilon = 1$, then $\U (s) = - \ln \sqrt {S (s)}$, 
		\begin{equation*}
			\vv(s) = \pm 2 c \sqrt {\mathcal A^2 - 1} \cos (\pm 2 s + \mathcal B) S^{- \frac 1 3} (s)
		\end{equation*}
		and
		\begin{equation*}
			\left \{
			\begin{array}{l}
				\uu (s) = \mp \sqrt {\mathcal A^2 - 1} \cos (\pm 2 s + \mathcal B) S^{-1} (s) \\[5pt]
				\ww (s) = c S^{- \frac 1 3} (s) \\[5pt]
				\xx (s) = S^{-1} (s) \\[5pt]
				\yy (s) = 0
			\end{array}
			\right ., \quad \text{for any } s,
		\end{equation*}
		where $\mathcal A > 0$, $\mathcal B \in \mathbb R$ and 
		\begin{equation*}
			S(s) = \mathcal A + \sqrt {\mathcal A^2 - 1} \sin (\pm 2s + \mathcal B).
		\end{equation*}
	\end{remark}
		
	We note that a first integral of \eqref{eq:secondOrderDifferentialEquationMeanCurvatureCaseYZero} is given by
	\begin{equation} \label{eq:firstOrderDifferentialEquationMeanCurvatureCaseYZero}
		\dot f^2 = \frac 4 9 f^2 \left ( C f^3 - c^{-6} f^6 - \epsilon \right ),
	\end{equation}
	where $C \in \mathbb R$, if $\epsilon < 0$ and $C > 0$, if $\epsilon \geq 0$.
	
	We know that the metric on $M$ is given by 
	\begin{equation*}
		g (s, t) = ds^2 + g_{22} (s) dt^2, \quad \text{for any } (s, t) \in I \times \mathbb R,
	\end{equation*}
	where $g_{22}$ is a positive solution of $\dot g_{22} (s) = - 2 \uu (s) g_{22} (s)$.
	
	Taking into account the expression of $\uu$ and the fact that $g_{22}$ is uniquely determined up to multiplicative positive constants, we obtain that 
	\begin{equation*}
		g (s, t) = ds^2 + f^{-3} (s) dt^2.
	\end{equation*}

	It can be also useful to perform a change of coordinates and have $f$ as a parameter (see \cite{NistorOniciucTurgaySen2023} and \cite{SenTurgay2018}). In this case, it is easy to see that the non-CMC biconservative W-surfaces with flat normal bundle which satisfy $A_3 (\grad f) = 2 f \grad f$ given in Theorem \ref{th:exampleYZero} form a $2$-parameter family. For this, we perform the change of coordinates $(s, t) \to (f = f (s), t)$ and obtain that $d f = \dot f (s) ds$. Using \eqref{eq:firstOrderDifferentialEquationMeanCurvatureCaseYZero}, we deduce that 
	\begin{equation*}
		d f^2 = \frac 4 9 f^2 \left ( C f^3 - c^{-6} f^6 - \epsilon \right ) ds^2
	\end{equation*}
	and thus
	\begin{equation} \label{eq:metricSolutionYEqualsZeroInFT}
		g (f, t) = \frac 9 {4 f^2 \left ( C f^3 - c^{-6} f^6 - \epsilon \right )} d f^2 + f^{-3} dt^2, \quad \text{for any } (f, t).
	\end{equation}
	
	In this coordinates system, the functions $\uu$, $\vv$, $\ww$, $\xx$ and $\yy$ from \eqref{eq:vSolutionYEqualsZeroInF} and \eqref{eq:solutionYEqualsZeroInF} are given by 
	\begin{equation} \label{eq:vInTheCaseYEqualsZeroInTheChartFT}
		\vv = - \frac {2 c^3} {f^2} \sqrt {C f^3 - c^{-6} f^6 - \epsilon}
	\end{equation}
	and
	\begin{equation} \label{eq:solutionInTheCaseYEqualsZeroInTheChartFT}
		\left \{
		\begin{array}{l}
			\uu = \sqrt {C f^3 - c^{-6} f^6 - \epsilon} \\[5pt]
			\ww = f \\[5pt]
			\xx = c^{-3} f^3 \\[5pt]
			\yy = 0
		\end{array}
		\right .,
	\end{equation}
	where $c > 0$ and $f$ is positive.
	
	We note that, from the definition of $\Omega$, see \eqref{eq:differentialSystemTangentPartConverse}, the solution must satisfy $3 \ww - \yy \neq 0$ and $3 \ww^2 + \yy (2 \ww - \yy) - \xx^2 \neq 0$ at any point. In our case, except for at most one point, solution \eqref{eq:solutionInTheCaseYEqualsZeroInTheChartFT} satisfies these inequalities. Thus, eventually restricting the domain interval $I$, the two inequalities are satisfied at any point.

	Even if in \eqref{eq:metricSolutionYEqualsZeroInFT} we have two constants, the family of all metrics given by \eqref{eq:metricSolutionYEqualsZeroInFT} form a $1$-parametric family of non-isometric metrics. In order to see this, we perform a further change of coordinates $(f, t) \to \left ( \F = c^{-1} f, \mathcal T = c^{- 3 / 2} t \right )$ and the metric \eqref{eq:metricSolutionYEqualsZeroInFT} can be written as 
	\begin{equation} \label{eq:metricSolutionYEqualsZeroInFTOneConstant}
		g(\F, \mathcal T) = \frac 9 {4 \F^2 \left ( \mathcal C \F^3 - \F^6 - \epsilon \right )} d \F^2 + \F^{-3} d \mathcal T^2,
	\end{equation}
	where $\mathcal C = C c^3$. Moreover, if we fix the domain metric $g$, that is we fix the constant $\mathcal C$, we have a $1$-parameter family of non-CMC and non-PNMC biconservative W-immersions with flat normal bundle indexed by $c$.
	
	\begin{remark}
		The expression of the Gaussian curvature of non-CMC biconservative W-surfaces with flat normal bundle and with $A_3 (\grad f) = 2 f \grad f$ does not depend on that constant $\mathcal C$ from the expression of \eqref{eq:metricSolutionYEqualsZeroInFTOneConstant}, since $K = \epsilon - \F^6$. Thus, after restricting the domain, for two distinct values of the constant $\mathcal C$ we obtain two non-isometric abstract surfaces with the same (non-constant) Gaussian curvature.
	\end{remark}
	
	To determine other biconservative surfaces with specific properties, for example with constant Gaussian curvature, it is useful to rewrite system \eqref{eq:differentialSystemTangentPartConverse} in an equivalent form.
	
	System \eqref{eq:differentialSystemTangentPartConverse} does not explicitly involve the Gaussian curvature $K$, but all the information provided by $K$ are enclosed in this system. As we will see, by including $K$ in \eqref{eq:differentialSystemTangentPartConverse}, specifically by including \eqref{eq:E1GaussianCurvature} and \eqref{eq:GaussianCurvatureFromEquation} as a constraint, we obtain a new system equivalent to \eqref{eq:differentialSystemTangentPartConverse}. It turns out that this new system is more appropriate to fulfill our objective.
	
	So, let
	\[
	\overline \Omega = \left \{ (\uu, \ww, \xx, \yy, \kk) \in \mathbb R^* \times (0, \infty) \times (0, \infty) \times \mathbb R \times \mathbb R \ \middle | \ 3 \ww - \yy \neq 0 \ \text{and} \ 3 \ww^2 + \kk - \epsilon \neq 0 \right \},
	\]
	and $I$ be an open interval. We define $\overline F_\vv : I \times \overline \Omega \to \mathbb R^5$ by
	\begin{equation*}
		\overline F_\vv (s, \uu, \ww, \xx, \yy, \kk) = 
		\begin{pmatrix}
			\kk + \uu^2 \\[5pt]
			\displaystyle - \frac {\ww \xx \vv (s)} {3\ww - \yy} \\[10pt]
			2 \xx \uu + \yy \vv (s) \\[5pt]
			2 \uu (\yy - \ww) - \xx \vv (s) \\[5pt]
			\displaystyle - \frac {6 \ww^2 \xx \vv (s)} {3 \ww - \yy} - 4 \uu \left ( \ww^2 - \kk + \epsilon \right )
		\end{pmatrix}
	\end{equation*}
	and consider the ODE system
	\begin{equation*} 
		\dot X (s) = \overline F_\vv \left ( s, X(s) \right ), \quad \text{for any } s,
	\end{equation*}
	where $\epsilon \in \mathbb R$, $\vv : I \to \mathbb R^*$ is a smooth arbitrarily fixed function and $X (s) = \left ( \uu (s), \ww (s), \xx (s), \yy (s), \kk (s) \right )$.
	
	Now, it is easy to see that
	
	\begin{proposition}
		The differential system \eqref{eq:differentialSystemTangentPartConverse} is equivalent to the following differential system
		\begin{equation} \label{eq:differentialSystemTangentPartWithK}
			\left \{
			\begin{array}{l}
				\dot X (s) = \overline F_\vv \left ( s, X(s) \right ), \\[5pt]
				\kk (s) = \epsilon + \yy (s) \bigl ( 2 \ww (s) - \yy (s) \bigr ) - \xx^2 (s)
			\end{array}
			\right . , \quad \text{for any } s.
		\end{equation}
	\end{proposition}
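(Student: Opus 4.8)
The plan is to make precise what ``equivalent'' means and then verify it by an elementary computation. Concretely, I would show that adjoining the algebraic relation $\kk = \epsilon + \yy(2\ww - \yy) - \xx^2$ as a constraint sets up a bijection between the solutions of \eqref{eq:differentialSystemTangentPartConverse} and those of \eqref{eq:differentialSystemTangentPartWithK}. The natural map is
\[
\Phi(\uu, \ww, \xx, \yy) = \bigl( \uu,\, \ww,\, \xx,\, \yy,\, \epsilon + \yy(2\ww - \yy) - \xx^2 \bigr).
\]
First I would observe that, since $3\ww^2 + \kk - \epsilon = 3\ww^2 + \yy(2\ww - \yy) - \xx^2$ once $\kk$ is replaced by its expression, $\Phi$ maps $\Omega$ diffeomorphically onto the locus in $\overline\Omega$ where the second line of \eqref{eq:differentialSystemTangentPartWithK} holds; so the algebraic part of the correspondence is automatic and only the differential equations need checking.

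For the forward implication, let $(\uu, \ww, \xx, \yy)$ solve \eqref{eq:differentialSystemTangentPartConverse} and set $\kk := \epsilon + \yy(2\ww - \yy) - \xx^2$. The second, third and fourth components of $\overline F_\vv$ are word for word the second, third and fourth components of $F_\vv$, so there is nothing to prove there; and the defining relation for $\kk$ turns the first equation of \eqref{eq:differentialSystemTangentPartConverse} into the first equation $\dot \uu = \kk + \uu^2$ of the new system. The one substantive step is the fifth equation: differentiating $\kk = \epsilon + \yy(2\ww - \yy) - \xx^2$, substituting $\dot\ww$, $\dot\xx$, $\dot\yy$ from \eqref{eq:differentialSystemTangentPartConverse}, collecting the terms proportional to $\uu$ using $\ww^2 - \kk + \epsilon = (\ww - \yy)^2 + \xx^2$ and the terms proportional to $\vv$ over the common denominator $3\ww - \yy$, one recovers exactly $\dot\kk = -6\ww^2\xx\vv/(3\ww - \yy) - 4\uu(\ww^2 - \kk + \epsilon)$. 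This is nothing but the computation already performed to establish \eqref{eq:E1GaussianCurvature}, so I would simply refer back to it rather than redo the algebra.

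For the converse, if $(\uu, \ww, \xx, \yy, \kk)$ solves \eqref{eq:differentialSystemTangentPartWithK}, then the constraint lets me rewrite $\dot\uu = \kk + \uu^2$ as $\dot\uu = \epsilon + \yy(2\ww - \yy) - \xx^2 + \uu^2$, while the remaining three equations are again literally those of $F_\vv$; hence $(\uu, \ww, \xx, \yy)$ solves \eqref{eq:differentialSystemTangentPartConverse} (and the fifth equation of $\overline F_\vv$ becomes redundant, being obtained by differentiating the constraint). I do not expect any genuine obstacle: the whole statement reduces to the bookkeeping identity for $\dot\kk$, and that identity is exactly \eqref{eq:E1GaussianCurvature}, which has already been proved; the only care needed is to record that the domains $\Omega$ and $\overline\Omega$ are matched by $\Phi$ so that the correspondence is defined on the correct sets.
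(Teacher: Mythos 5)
Your proposal is correct and follows exactly the route the paper intends (the paper states the proposition with no written proof, only the remark that one adjoins \eqref{eq:E1GaussianCurvature} and \eqref{eq:GaussianCurvatureFromEquation} as a constraint): the first four components of $\overline F_\vv$ coincide with those of $F_\vv$ once $\kk$ is replaced via the constraint, and differentiating the constraint along a solution of \eqref{eq:differentialSystemTangentPartConverse} does yield $\dot\kk = -6\ww^2\xx\vv/(3\ww-\yy) - 4\uu\left(\ww^2-\kk+\epsilon\right)$, using $(\ww-\yy)^2+\xx^2=\ww^2-\kk+\epsilon$. Your observation that the domains $\Omega$ and $\overline\Omega$ match under the constraint is the right bookkeeping, so nothing is missing.
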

	
	As we have announced, we present a particular solution of \eqref{eq:differentialSystemTangentPartWithK} and, consequently a solution of \eqref{eq:differentialSystemTangentPartConverse}, with a nice geometric meaning.
	
	\begin{theorem} \label{th:exampleKEqualsEpsilon}
		Let $\varphi : M^2 \to N^4 (\epsilon)$ be a non-CMC biconservative W-surface with flat normal bundle. Assume that $3 f^2 + K - \epsilon \neq 0$, $\nabla^\perp E_3 \neq 0$, $A_4 \neq 0$ and $\nabla_{E_2} E_2 \neq 0$ at any point. Then, $M$ has constant Gaussian curvature $K = \epsilon$ if and only if, locally, 
		\begin{equation} \label{eq:vSolutionKEqualsEpsInTermsOfF}
			\vv = - \frac {\dot f (3 - c f)} {\sqrt c f^{\frac 3 2} \sqrt {2 - c f}}
		\end{equation}
		and
		\begin{equation} \label{eq:solutionKEqualsEpsInTermsOfF}
			\left \{
			\begin{array}{l}
				\displaystyle \uu = \frac 3 2 \frac {\dot f} f \\[10pt]
				\displaystyle \ww = f \\[5pt]
				\displaystyle \xx = \sqrt c f^{\frac 3 2} \sqrt {2 - c f} \\[10pt]
				\displaystyle \yy = c f^2
			\end{array}
			\right .,
		\end{equation}
		where $c$ is a positive real constant and $f$ is a positive solution of 
		\begin{equation} \label{eq:secondOrderODEForF}
			\ddot f f - \frac 5 2 \dot f^2 - \frac 2 3 \epsilon f^2 = 0
		\end{equation}
		which satisfies $\dot f > 0$.
	\end{theorem}

	\begin{proof}
		Suppose that $\kk = \epsilon$. Around any point we consider local coordinates $(s, t)$ given by Proposition \ref{th:metricOnM}. The first equation of system \eqref{eq:differentialSystemTangentPartWithK} becomes 
		\[
		\dot \uu = \epsilon + \uu^2.
		\]
		In the following we solve this equation and find the positive function $\uu$.
		
		\begin{enumerate} [label = \roman*)]
			\item If $\epsilon = 0$, then $\dot \uu = \uu^2$. Since $\uu \neq 0$ at any point, we have
			\begin{equation*}
				\frac {\dot \uu (s)} {\uu^2 (s)} = 1 \Leftrightarrow \frac 1 {\uu (s)} = - s + \mathcal C,
			\end{equation*}
			where $\mathcal C \in \mathbb R$, $s \in I$.
			Thus,
			\begin{equation} \label{eq:uSolutionKEquals0Explicit}
				\uu (s) = \frac 1 {-s + \mathcal C}, \quad \text{for any } s \in I,
			\end{equation}
			where $I = (- \infty, \mathcal C)$.
			
			\item If $\epsilon > 0$, then 
			\begin{equation*}
				\frac {\dot \uu (s)} {\epsilon + \uu^2 (s)} = 1 \Leftrightarrow \frac 1 {\sqrt \epsilon} \arctan \frac {\uu (s)} {\sqrt \epsilon} = s + \mathcal C,
			\end{equation*}
			where $\mathcal C \in \mathbb R$ and $s \in \left ( - \pi / \left ( 2 \sqrt \epsilon \right ) - \mathcal C , \pi / \left ( 2 \sqrt \epsilon \right ) - \mathcal C  \right )$.
			
			Therefore, 
			\begin{equation} \label{eq:uSolutionKEquals1Explicit}
				\uu (s) = \sqrt \epsilon \tan \left ( \sqrt \epsilon (s + \mathcal C) \right ), \quad \text{for any } s \in I, 
			\end{equation}
			where $I = \left ( - \mathcal C, \pi / \left ( 2 \sqrt \epsilon \right ) - \mathcal C \right )$.
			
			\item If $\epsilon < 0$, we distinguish two cases. If $\uu$ is constant, then $\uu (s) = \pm \sqrt {- \epsilon}$, for any $s \in I$.
			
			If $\uu$ is not constant, then, eventually restricting $I$, we have $\epsilon + \uu^2 \neq 0$ at any point. Then, we have
			\begin{equation*}
				\frac {\dot \uu (s)} {\epsilon + \uu^2 (s)} = 1 \Leftrightarrow \frac 1 {2 \sqrt {- \epsilon}} \ln \left | \frac {\uu (s) - \sqrt {- \epsilon}} {\uu (s) + \sqrt {- \epsilon}} \right | = s + \mathcal C \Leftrightarrow \left | \frac {\uu (s) - \sqrt {- \epsilon}} {\uu (s) + \sqrt {- \epsilon}} \right | = e^{2 \sqrt {- \epsilon} (s + \mathcal C)},
			\end{equation*}
			where $\mathcal C \in \mathbb R$ and $s \in I$.
			
			Since the left hand-side of the previous relation does not vanish, we obtain either 
			\begin{equation*}
				\frac {\uu (s) - \sqrt {- \epsilon}} {\uu (s) + \sqrt {- \epsilon}} = e^{2 \sqrt {- \epsilon} (s + \mathcal C)},
			\end{equation*}
			or
			\begin{equation*}
				\frac {\uu (s) - \sqrt {- \epsilon}} {\uu (s) + \sqrt {- \epsilon}} = - e^{2 \sqrt {- \epsilon} (s + \mathcal C)}.
			\end{equation*}
			Therefore, either 
			\begin{equation} \label{eq:uSolutionKEquals-1Explicit1}
				\uu (s) = \frac {\sqrt{- \epsilon} \left ( 1 + e^{2 \sqrt {- \epsilon} (s + \mathcal C)} \right )} {1 - e^{2 \sqrt {- \epsilon} (s + \mathcal C)}}, \quad \text{for any } s \in I,
			\end{equation}
			or
			\begin{equation} \label{eq:uSolutionKEquals-1Explicit2}
				\uu (s) = \frac {\sqrt{- \epsilon} \left ( 1 - e^{2 \sqrt {- \epsilon} (s + \mathcal C)} \right )} {1 + e^{2 \sqrt {- \epsilon} (s + \mathcal C)}}, \quad \text{for any } s \in I,
			\end{equation}
			where $\mathcal C \in \mathbb R$ and, in both cases, $I = (- \infty, - \mathcal C)$.
		\end{enumerate}
		Summarizing, for any value of the sectional curvature $\epsilon \in \mathbb R$ of the target, the differential equation $\dot \uu = \epsilon + \uu^2$ has explicit solutions. Let $\uu$ be a solution of this equation.
		
		Using the second and fifth equations of \eqref{eq:differentialSystemTangentPartWithK} and the fact that $\ww$ must be positive, we obtain 
		\begin{equation*}
			6 \dot \ww \ww - 4 \uu \ww^2 = 0 \Leftrightarrow 3 \frac {d \ww^2} {ds} = 4 \uu \ww^2 \Leftrightarrow \frac d {ds} \left ( \ln \ww^2 \right ) = \frac 4 3 \uu.
		\end{equation*}
		Therefore, considering an arbitrarily fixed primitive $\U$ of $\uu$, we get
		\begin{equation*}
			\ww = e^{c_1} e^{\frac 2 3 \U},
		\end{equation*}
		where $c_1 \in \mathbb R$. Since $\uu > 0$, we have
		\[
		\dot \ww = \frac 2 3 e^{c_1} \uu e^{\frac 2 3 \U} > 0.
		\]
		From the second equation of \eqref{eq:differentialSystemTangentPartWithK} we obtain that 
		\begin{equation} \label{eq:expressionXVs}
			\xx \vv = - 2 e^{c_1} \uu e^{\frac 2 3 \U} + \frac 2 3 \uu \yy.
		\end{equation}
		Substituting \eqref{eq:expressionXVs} in the forth equation of \eqref{eq:differentialSystemTangentPartWithK} we obtain
		\begin{align*}
			 \dot \yy =& 2 \uu \yy - 2 e^{c_1} \uu e^{\frac 2 3 \U} + 2 e^{c_1} \uu e^{\frac 2 3 \U} - \frac 2 3 \uu \yy \\
			 		  =& \frac 4 3 \uu \yy.
		\end{align*}
		Multiplying this relation by $e^{ - 4 \U / 3}$ we obtain 
		\begin{equation*}
			\frac d {ds} \left ( \yy e^{- \frac 4 3 \U} \right ) = 0,
		\end{equation*}
		which means that $\yy e^{- 4 \U / 3}$ is a first integral, that is
		\begin{equation*}
			\yy = c_2 e^{\frac 4 3 \U},
		\end{equation*}
		where $c_2 \in \mathbb R$.
		
		Now we multiply the third equation of \eqref{eq:differentialSystemTangentPartWithK} by $\xx$ and using \eqref{eq:expressionXVs}, we obtain
		\begin{align*}
			& \dot \xx \xx = 2 \uu \xx^2 + \yy \xx \vv \\
			\Leftrightarrow & \frac d {ds} \left ( \xx^2 \right ) - 4 \uu \xx^2 = \frac {4 c_2^2} 3 \uu e^{\frac 8 3 \U} - 4 c_2 e^{c_1} \uu e^{2 \U} \\ 
			\Leftrightarrow & \frac d {ds} \left ( \xx^2 \right ) e^{- 4 \U} - 4 \uu e^{- 4 \U} \xx^2 = \frac {4 c_2^2} 3 \uu e^{- \frac 4 3 \U} - 4 c_2 e^{c_1} \uu e^{- 2 \U} \\
			\Leftrightarrow & \frac d {ds} \left ( \xx^2 e^{- 4 \U} + c_2^2 e^{- \frac 4 3 \U} - 2 c_2 e^{c_1} e^{- 2 \U} \right ) = 0, 	
		\end{align*}
		which means that
		\[
		\xx^2 e^{- 4 \U} + c_2^2 e^{- \frac 4 3 \U} - 2 c_2 e^{c_1} e^{- 2 \U}
		\]
		is a first integral. Therefore, 
		\begin{equation*}
			\xx^2 = 2 c_2 e^{c_1} e^{2 \U} - c_2^2 e^{\frac 8 3 \U} + c_3 e^{4 \U},
		\end{equation*}
		where $c_3 \in \mathbb R$.
		Since $\xx > 0$ at any point, we have
		\begin{equation*}
			\xx = \sqrt {2 c_2 e^{c_1} e^{2 \U} - c_2^2 e^{\frac 8 3 \U} + c_3 e^{4 \U}}.
		\end{equation*}
		Now we check if the last equation of \eqref{eq:differentialSystemTangentPartWithK} is satisfied, that is 
		\begin{align*}
			0 = & \yy (2 \ww - \yy) - \xx^2 \\
			  = & c_2 e^{\frac 4 3 \U} \left ( 2 e^{c_1} e^{\frac 2 3 \U} - c_2 e^{\frac 4 3 \U} \right ) - 2 c_2 e^{c_1} e^{2 \U} + c_2^2 e^{\frac 8 3 \U} - c_3 e^{4 \U} \\
			  = & - c_3 e^{4 \U},
		\end{align*}
		which is equivalent to 
		\begin{equation*}
			c_3 = 0.
		\end{equation*}
		Therefore, we obtain
		\begin{equation*} \label{eq:plusX}
			\xx = e^\U \sqrt {2 c_2 e^{c_1} - c_2^2 e^{\frac 2 3 \U}}.
		\end{equation*}
		In this case $c_2$ must be positive.
		
		From the second equation of \eqref{eq:differentialSystemTangentPartWithK} we deduce the following expression of $\vv$
		\begin{equation*}
			\vv = - \frac {2 \uu e^{- \frac 1 3 \U} \left ( 3 e^{c_1} - c_2 e^{\frac 2 3 \U} \right )} {3 \sqrt {2 c_2 e^{c_1} - c_2^2 e^{\frac 2 3 \U}}}.
		\end{equation*}
		Therefore, we have 
		\begin{equation} \label{eq:vInTheCaseKEqualsEpsilon}
			\vv = \displaystyle - \frac {2 \uu \left ( 3e^{c_1} - c_2 e^{\frac 2 3 \U} \right )} {3 e^{\frac 1 3 \U} \sqrt {2 c_2 e^{c_1} - c_2^2 e^{\frac 2 3 \U}}}
		\end{equation}
		and
		\begin{equation} \label{eq:solutionInTheCaseKEqualsEpsilon}
			\left \{
			\begin{array}{l}
				\ww = e^{c_1} e^{\frac 2 3 \U} \\[5pt]
				\xx = e^{\U} \sqrt {2 c_2 e^{c_1} - c_2^2 e^{\frac 2 3 \U}} \\[5pt]
				\yy = c_2 e^{\frac 4 3 \U} 
			\end{array}
			\right .,
		\end{equation}
		where $c_1 \in \mathbb R$ and $c_2 > 0$.
		
		As in the case of the particular solution presented in Theorem \ref{th:exampleYZero}, it is convenient to write the solution provided in Theorem \ref{th:exampleKEqualsEpsilon} in terms of the mean curvature $f = f(s)$. We know from Theorem \ref{th:existenceTheorem} that the function $\ww$ represents the mean curvature of $M$, thus
		\[
		f = f (s) = \ww (s) = e^{c_1} e^{\frac 2 3 \U (s)} > 0.
		\]
		First, we differentiate $f$ and obtain
		\begin{equation*}
			\dot f = \frac 2 3 e^{c_1} \uu e^{\frac 2 3 \U} = \frac 2 3 \uu f > 0,
		\end{equation*}
		which is equivalent to 
		\begin{equation*}
			\uu = \frac 3 2 \frac {\dot f} f.
		\end{equation*}
		Using the fact that $e^\U = e^{- \frac 3 2 c_1} f^{\frac 3 2}$ and putting $c = c_2 e^{- 2 c_1} > 0$, we obtain \eqref{eq:vSolutionKEqualsEpsInTermsOfF} and \eqref{eq:solutionKEqualsEpsInTermsOfF}.
		
		Now, since $(\uu, \ww, \xx, \yy, \kk)$ is a solution of \eqref{eq:differentialSystemTangentPartWithK}, we deduce that $f$ is a solution of the following second order ODE \eqref{eq:secondOrderODEForF}.
	\end{proof}

	\begin{remark}
		Formulas \eqref{eq:vInTheCaseKEqualsEpsilon} and \eqref{eq:solutionInTheCaseKEqualsEpsilon} describe the surfaces given in Theorem \ref{th:exampleKEqualsEpsilon} in terms of an arbitrarily fixed primitive $\U$ of $\uu$. Integrating \eqref{eq:uSolutionKEquals0Explicit}, \eqref{eq:uSolutionKEquals1Explicit}, \eqref{eq:uSolutionKEquals-1Explicit1} and \eqref{eq:uSolutionKEquals-1Explicit2}, we can find explicit formulas for the functions $\vv$, $\ww$, $\xx$ and $\yy$ given in \eqref{eq:vSolutionKEqualsEpsInTermsOfF} and \eqref{eq:solutionKEqualsEpsInTermsOfF}.
	\end{remark}	
	
	We note that, a first integral of \eqref{eq:secondOrderODEForF} is given by
	\begin{equation} \label{eq:firstOrderODEForF}
		\dot f^2 = f^2 \left ( C f^3 - \frac 4 9 \epsilon \right ),
	\end{equation}
	where $C \in \mathbb R$, if $\epsilon < 0$ and $C > 0$, if $\epsilon \geq 0$.

	Similarly to the case of Theorem \ref{th:exampleYZero}, we find that the metric $g$ is given by
	\begin{equation*}
		g (s, t) = ds^2 + f^{-3} (s) dt^2, \quad \text{for any } (s, t).
	\end{equation*}
	
	Now, in order to check that the non-CMC biconservative W-surfaces with flat normal bundle which have $K = \epsilon$ given in Theorem \ref{th:exampleKEqualsEpsilon} form a $1$-parameter family, we perform the change of coordinates $(s, t) \to (f = f(s), t)$. Using \eqref{eq:firstOrderODEForF}, we deduce that 
	\[
	df^2 = f^2 \left ( C f^3 - \frac 4 9 \epsilon \right ) ds^2
	\]
	and thus
	\begin{equation} \label{eq:metricKEqualsEpsilonInFT}
		g (f, t) = \frac 9 {f^2 \left ( 9 C f^3 - 4 \epsilon \right )} df^2 + f^{-3} dt^2, \quad \text{for any } (f, t).
	\end{equation}
	In this coordinates system, the functions $\uu$, $\vv$, $\ww$, $\xx$ and $\yy$ are given by
	\begin{equation} \label{eq:vInTheCaseKEqualsEpsInTheChartFT}
		\vv = - \frac {(3 - c f) \sqrt {9 C f^3 - 4 \epsilon}} {3 \sqrt c f^{\frac 1 2} \sqrt {2 - c f}}
	\end{equation}
	and
	\begin{equation} \label{eq:solutionInTheCaseKEqualsEpsInTheChartFT}
		\left \{
		\begin{array}{l}
			\displaystyle \uu = \frac 1 2 \sqrt {9 C f^3 - 4 \epsilon} \\[5pt]
			\displaystyle \ww = f \\[5pt]
			\displaystyle \xx = \sqrt c f^{\frac 3 2} \sqrt {2 - c f} \\[5pt]
			\displaystyle \yy = c f^2
		\end{array}
		\right .,
	\end{equation}
	where $c > 0$ and $f$ is positive.
	
	Since $K = \epsilon$, the constant $C$ which appears in \eqref{eq:metricKEqualsEpsilonInFT} is not an indexing constant (we can perform another change of coordinates such that the constant $C$ does not appear in the expression of the metric $g$). Thus, we have a $1$-parameter family of non-CMC and non-PNMC biconservative W-immersions with flat normal bundle indexed by $c$.
	
	We note that, from the definition of $\bar \Omega$, see \eqref{eq:differentialSystemTangentPartWithK}, the solution must satisfy $3 \ww - \yy \neq 0$ and $3 \ww^2 + \kk - \epsilon \neq 0$ at any point. Except for at most one point, these relations are satisfied. Thus, eventually restricting the domain interval $I$, the two inequalities are satisfied at any point.
	
	In the following, we present a solution of system \eqref{eq:differentialSystemTangentPartWithK} which generalizes those obtained in Theorems \ref{th:exampleYZero} and \ref{th:exampleKEqualsEpsilon}. The key point is to notice that in both cases the solutions satisfy 
	\[
	\frac {\dot \ww} \ww = \frac 2 3 \uu.
	\]
	
	\begin{proposition} \label{th:exampleMoreGeneral}
		Let $\varphi : M^2 \to N^4 (\epsilon)$ be a non-CMC biconservative W-surface with flat normal bundle. Assume that $3 f^2 + K - \epsilon \neq 0$, $\nabla^\perp E_3 \neq 0$, $A_4 \neq 0$ and $\nabla_{E_2} E_2 \neq 0$ at any point. Then, $M$ satisfies 
		\[
		\nabla _{E_2} E_2 = \frac 3 2 \frac {\dot f} f E_1
		\] 
		if and only if, locally, the functions $\uu$, $\vv$, $\ww$, $\xx$, $\yy$ and $\kk$ are given by one of the following
		\begin{enumerate} [label = \alph*)]
			\item \label{item:aGeneralization} 
			\begin{equation*}
				\vv = \frac {\dot f \left ( c_1^{-1} c_2 f - 3 \right )} {c_1^{- \frac 1 2} f^{\frac 3 2} \sqrt {2 c_2 - c_1^{-1} c_2^2 f - c_1^{-5} f^3}}
			\end{equation*}
			and
			\begin{equation*}
				\left \{
				\begin{array}{l}
					\displaystyle \uu = \frac 3 2 \frac {\dot f} f \\[10pt]
					\ww = f \\[5pt]
					\xx = c_1^{- \frac 1 2} f^{\frac 3 2} \sqrt {2 c_2 - c_1^{-1} c_2^2 f - c_1^{-5} f^3} \\[5pt]
					\yy = c_1^{-1} c_2 f^2 \\[5pt]
					\kk = \epsilon + c_1^{-6} f^6 
				\end{array}
				\right .,
			\end{equation*}
			where $c_1 > 0$, $c_2 > 0$ and $f$ is a positive solution of
			\begin{equation} \label{eq:secondOrderODEGeneralizationA}
				\ddot f f - \frac 5 2 \dot f^2 - \frac 2 3 \epsilon f^2 - \frac 2 3 c_1^{-6} f^8 = 0
			\end{equation} 
			such that $\dot f > 0$;
			
			\item \label{item:bGeneralization} relations \eqref{eq:vSolutionKEqualsEpsInTermsOfF}, \eqref{eq:solutionKEqualsEpsInTermsOfF} and 
			\begin{equation*}
				\kk = \epsilon;
			\end{equation*}
		
			\item \label{item:cGeneralization} 
			\begin{equation*}
				\vv = \frac {\dot f \left ( c_1^{-1} c_2 f - 3 \right )} {c_1^{- \frac 1 2} f^{\frac 3 2} \sqrt {2 c_2 - c_1^{-1} c_2^2 f + c_1^{-5} f^3}}
			\end{equation*}
			and
			\begin{equation*}
				\left \{
				\begin{array}{l}
					\displaystyle \uu = \frac 3 2 \frac {\dot f} f \\[10pt]
					\ww = f \\[5pt]
					\xx = c_1^{- \frac 1 2} f^{\frac 3 2} \sqrt {2 c_2 - c_1^{-1} c_2^2 f + c_1^{-5} f^3} \\[5pt]
					\yy = c_1^{-1} c_2 f^2 \\[5pt]
					\kk = \epsilon - c_1^{-6} f^6 
				\end{array}
				\right .,
			\end{equation*}
			where $c_1 > 0$, $c_2 \in \mathbb R$ and $f$ is a positive solution of 
			\begin{equation*} 
				\ddot f f - \frac 5 2 \dot f^2 - \frac 2 3 \epsilon f^2 + \frac 2 3 c_1^{-6} f^8 = 0
			\end{equation*} 
			such that $\dot f > 0$.
		\end{enumerate}
	\end{proposition}

	As in the previous cases, we make the change of coordinates $(s, t) \to (f = f(s), t)$. We write here only \ref{item:aGeneralization} of Proposition \ref{th:exampleMoreGeneral} in terms of $f$, the item \ref{item:cGeneralization} can be treated analogously. 
	
	Taking into account the fact that $f$ must satisfy the second order ODE \eqref{eq:secondOrderODEGeneralizationA} with a first integral 
	\begin{equation*}
		\dot f^2 = \frac 4 9 f^2 \left ( C f^3 + c_1^{-6} f^6 - \epsilon  \right ),
	\end{equation*}
	we obtain that the metric $g$ is given by 
	\begin{equation*}
		g (f, t) = \frac 9 {4 f^2 \left ( C f^3 + c_1^{-6} f^6 - \epsilon  \right )} df^2 + f^{-3} dt^2, \quad \text{for any } (f, t),
	\end{equation*}
	where $C \in \mathbb R$.
	
	The solution from \ref{item:aGeneralization} of Proposition \ref{th:exampleMoreGeneral} can be written as
	\begin{equation*}
		\vv = \frac {2 \left ( c_1^{-1} c_2 f - 3 \right ) \sqrt {C f^3 + c_1^{-6} f^6 - \epsilon}} {3 c_1^{- \frac 1 2} f^{\frac 1 2}  \sqrt {2 c_2 - c_1^{-1} c_2^2 f - c_1^{-5} f^3}}
	\end{equation*}
	and
	\begin{equation*}
		\left \{
		\begin{array}{l}
			\displaystyle \uu = \sqrt {C f^3 + c_1^{-6} f^6 - \epsilon} \\[10pt]
			\ww = f \\[5pt]
			\xx = c_1^{- \frac 1 2} f^{\frac 3 2} \sqrt {2 c_2 - c_1^{-1} c_2^2 f - c_1^{-5} f^3} \\[5pt]
			\yy = c_1^{-1} c_2 f^2 \\[5pt]
			\kk = \epsilon + c_1^{-6} f^6 
		\end{array}
		\right .,
	\end{equation*}
	where $c_1 > 0$, $c_2 > 0$ and $f$ is a positive.
	
	From the definition of $\bar \Omega$, the solution must satisfy $3 \ww - \yy \neq 0$ and $3 \ww^2 + \kk - \epsilon \neq 0$ at any point. Eventually, except for at most two points,  these relations are satisfied.
	
	We note that, excepting the case \ref{item:bGeneralization} of Proposition \ref{th:exampleMoreGeneral}, the solutions form a $3$-parametric family of non-CMC biconservative W-surfaces with flat normal bundle. If we fix the domain metric $g$, we have a $2$-parameter family of non-CMC and non-PNMC biconservative W-immersions with flat normal bundle indexed by $c_1$ and $c_2$.
	
	\begin{remark}
		If we choose $c_2 = 0$ in \ref{item:cGeneralization} of Proposition \ref{th:exampleMoreGeneral}, we obtain the result in Theorem \ref{th:exampleYZero}.
	\end{remark}
	
	At the end of this section, we remark that in the proof of Theorem \ref{th:existenceTheorem} the relation $3 f^2 + K - \epsilon \neq 0$ was not needed, even if it was implicitly ensured by the definition of the domain $\Omega$. In fact, if we assume the equality 
	\begin{equation} \label{eq:equalityCondition}
		3 f^2 + K - \epsilon = 0
	\end{equation}
	and slightly modify $\Omega$, the Theorem \ref{th:existenceTheorem} remains valid and we have an existence result. Moreover, in the following result we determine all non-CMC biconservative W-surfaces with flat normal bundle which satisfy \eqref{eq:equalityCondition}.
	
	\begin{theorem} \label{th:exampleKEquals3F2+epsilon}
		Let $\varphi : M^2 \to N^4 (\epsilon)$ be a non-CMC biconservative W-surface with flat normal bundle. Assume that $\left \langle \nabla ^\perp _{E_1} E_3, E_4 \right \rangle \left \langle \nabla ^\perp _{E_2} E_3, E_4 \right \rangle = 0$ on $M$ and $\nabla^\perp E_3 \neq 0$, $A_4 \neq 0$ and $\nabla_{E_2} E_2 \neq 0$ at any point. Then, $M$ satisfies $3 f^2 + K - \epsilon = 0$ if and only if, locally,
		\begin{equation*}
			\vv = \frac {c \dot f} {f \sqrt {- 4 c f^{\frac 1 2} - c^2}}
		\end{equation*}
		and 
		\begin{equation*}
			\left \{
			\begin{array}{l}
				\displaystyle \uu = \frac 3 4 \frac {\dot f} f \\[10pt]
				\ww = f \\[5pt]
				\xx = \sqrt {- 4 c f^{\frac 3 2} - c^2 f} \\[5pt]
				\yy = 3f + c f^{\frac 1 2} \\[5pt]
				\kk = \epsilon - 3 f^2
			\end{array}
			\right .,
		\end{equation*}
		where $c < 0$ and $f$ is a positive solution of 
		\begin{equation} \label{eq:secondOrderODEKEqualsEpsilon-3F2}
			\ddot f f - \frac 7 4 \dot f^2 + 4 f^4 - \frac 4 3 \epsilon f^2 = 0,
		\end{equation}
		such that $\dot f > 0$.	
	\end{theorem}
	
	Taking into account that a first integral of \eqref{eq:secondOrderODEKEqualsEpsilon-3F2} is given by
	\begin{equation*}
		\dot f^2 = f^2 \left ( C f^{\frac 3 2} - 16 f^2 - \frac {16} 9 \epsilon \right ),
	\end{equation*}
	we obtain that the metric
	\begin{equation} \label{eq:metricExampleKEqualsEps-3F2}
		g(s, t) = ds^2 + f^{- \frac 3 2} (s) dt^2, \quad \text{for any } (s, t)
	\end{equation} 
	can also be given by 
	\begin{equation} \label{eq:metricExampleKEqualsEps-3F2InFT}
		g (f, t) = \frac 1 {f^2 \left ( C f^{\frac 3 2} - 16 f^2 - \frac {16} 9 \epsilon \right )} df^2 + f^{- \frac 3 2} dt^2, \quad \text{for any } (f, t),
	\end{equation}
	where $C \in \mathbb R$, if $\epsilon < 0$ and $C > 0$, if $\epsilon \geq 0$.
	
	The solution of Theorem \ref{th:exampleKEquals3F2+epsilon} can be written as
	\begin{equation*}
		\vv = \frac {c \sqrt{C f^{\frac 3 2} - 16 f^2 - \frac {16} 9 \epsilon}} {\sqrt{- 4 c f^{\frac 1 2} - c^2}}
	\end{equation*}
	and
	\begin{equation*}
		\left \{
		\begin{array}{l}
			\uu = \frac 3 4 \sqrt {C f^{\frac 3 2} - 16 f^2 - \frac {16} 9 \epsilon} \\[5pt]
			\ww = f \\[5pt]
			\xx = \sqrt {- 4 c f^{\frac 3 2} - c^2 f} \\[5pt]
			\yy = 3 f + c f^{\frac 1 2} \\[5pt]
			\kk = \epsilon - 3 f^2
		\end{array}
		\right .
	\end{equation*}
	where $c < 0$ and $f$ is positive.
	
	From the definition of $\bar \Omega$, the solution must satisfy $3 \ww - \yy \neq 0$ at any point. In this case, this relation is always satisfied.
	
	One can check that the domain metrics given by \eqref{eq:metricExampleKEqualsEps-3F2InFT} represent a $1$-parametric family of non-isometric metrics indexed by the constant $C$. If we fix a metric $g$, that is we fix the parameter $C$, we have a $1$-parameter family of non-CMC and non-PNMC biconservative W-immersions with flat normal bundle indexed by $c$.
	
	\begin{remark}
		In our approach for classifying non-CMC biconservative W-surfaces with flat normal bundle it was essential to have $b_1 b_2 = 0$ on $M$, that is $\left \langle \nabla ^\perp _{E_1} E_3, E_4 \right \rangle \left \langle \nabla ^\perp _{E_2} E_3, E_4 \right \rangle = 0$, as this condition lead us to the main system \eqref{eq:differentialSystemTangentPartConverse}. The case $b_1 b_2 \neq 0$, which implies $3 f^2 + K - \epsilon = 0$, remains uncovered by this paper.
	\end{remark}

	We recall that in the case of non-CMC biconservative surfaces $\psi : \left ( M^2, g \right ) \to N^3 (\epsilon)$ the Gauss formula is equivalent to $K = \epsilon - 3 \left ( f^\psi \right )^2$ (see \cite{CaddeoMontaldoOniciucPiu2014}). Inspired by this we have the following result.
	
	\begin{theorem} \label{th:twoImmersionsKEqualsEps-3F2}
		Let $\left ( M^2 = I \times \mathbb R, g \right )$ be the abstract surface with the metric given by \eqref{eq:metricExampleKEqualsEps-3F2} with $f$ a positive function satisfying $\dot f > 0$ and \eqref{eq:secondOrderODEKEqualsEpsilon-3F2}. Then,
		\begin{enumerate} [label = \alph*)]
			\item \label{item:propositionDifferentImmersionsA} there exists a $1$-parametric family of non-CMC biconservative W-surfaces $\varphi : \left ( M^2, g \right ) \to N^4 (\epsilon)$ with flat normal bundle satisfying $\left \langle \nabla ^\perp _{E_1} E_3, E_4 \right \rangle \left \langle \nabla ^\perp _{E_2} E_3, E_4 \right \rangle = 0$, $K = \epsilon - 3 \left ( f^\varphi \right )^2$ on $M$ and $\nabla^\perp E_3 \neq 0$, $A_4 \neq 0$, $\nabla_{E_2} E_2 \neq 0$ at any point. Each of these surfaces in this family have the same mean curvature given by
			\begin{equation*}
				\left ( f^\varphi \right ) ^2 = \frac 1 3 (\epsilon - K);
			\end{equation*}
			
			\item \label{item:propositionDifferentImmersionsB} there exists a unique non-CMC biconservative surface $\psi : \left ( M^2, g \right ) \to N^3 (\epsilon)$. Its mean curvature is
			\begin{equation*}
				\left ( f^\psi \right )^2 = \left ( f^\varphi \right ) ^2 = \frac 1 3 (\epsilon - K).
			\end{equation*}
		\end{enumerate}
	\end{theorem}

	\begin{proof}
		For item \ref{item:propositionDifferentImmersionsA} we have seen in Theorem \ref{th:exampleKEquals3F2+epsilon} that such abstract surface determines a $1$-parametric family of immersions satisfying the desired conditions. For item \ref{item:propositionDifferentImmersionsB} it is enough to notice that the metric $g$ given in \eqref{eq:metricExampleKEqualsEps-3F2} is precisely the metric given in Remark 4.3 of \cite{FetcuNistorOniciuc2016} and then apply Theorem 4.5 of \cite{FetcuNistorOniciuc2016}.
	\end{proof}	

	\begin{remark} \label{th:remarkMetric}
		We note that the metric $g$ given in \eqref{eq:metricExampleKEqualsEps-3F2} can be written as 
		\begin{equation*}
			g (s, t) = ds^2 + (\epsilon - K)^{- \frac 3 4} dt^2, \quad \text{for any } (s, t),
		\end{equation*}
		where $K = K(s)$ satisfies $\epsilon - K > 0$, $|\grad K|^2 = \dot K^2 > 0$ and 
		\begin{equation*}
			24 (\epsilon - K) \ddot K + 33 \dot K^2 + 64 K (\epsilon - K)^2 = 0.
		\end{equation*}
		The shape operators of an immersion $\varphi$ from item \ref{item:propositionDifferentImmersionsA} of Proposition \ref{th:twoImmersionsKEqualsEps-3F2} are given by
		\begin{equation*}
			A^\varphi _3 = 
			\begin{pmatrix}
				- 3^{- \frac 1 2} (\epsilon - K) ^{\frac 1 2} - c 3^{- \frac 1 4} (\epsilon - K)^{\frac 1 4} & 0 \\
				0 & 3^{\frac 1 2} (\epsilon - K) ^{\frac 1 2} + c 3^{- \frac 1 4} (\epsilon - K)^{\frac 1 4}
			\end{pmatrix}
		\end{equation*}
		and
		\begin{equation*}
			A^\varphi _4 = 
			\begin{pmatrix}
				\sqrt {-4 c 3 ^{-\frac 3 4} (\epsilon - K)^{\frac 3 4} - c^2 3^{- \frac 1 2} (\epsilon - K)^{\frac 1 2}} & 0 \\
				0 & - \sqrt {-4 c 3 ^{-\frac 3 4} (\epsilon - K)^{\frac 3 4} - c^2 3^{- \frac 1 2} (\epsilon - K)^{\frac 1 2}}
			\end{pmatrix},
		\end{equation*}
		where $c < 0$.
		
		The shape operator of the immersion $\psi$ from item \ref{item:propositionDifferentImmersionsB} of Proposition \ref{th:twoImmersionsKEqualsEps-3F2} is given by
		\begin{equation*}
			A^\psi = 
			\begin{pmatrix}
				- 3^{- \frac 1 2} (\epsilon - K)^{\frac 1 2} & 0 \\
				0 & 3^{\frac 1 2} (\epsilon - K)^{\frac 1 2}
			\end{pmatrix}.
		\end{equation*}
	\end{remark}
	
	\subsection{The PNMC case - a different approach} \label{sec:PNMC}
	
	The PNMC case can be seen as a singular case of \eqref{eq:differentialSystemTangentPartConverse}. Recall that a surface is PNMC, that is $\nabla ^\perp E_3 = 0$, if and only if $b_1 = b_2 = 0$. When the surface $M$ is PNMC, from \eqref{eq:biharmonicEquationTangentPart}, we have 
	\[
	E_1 = \frac {\grad f} {|\grad f|} \quad \text{and} \quad k_1 = - f.
	\] 
	Thus, $0 = k_1 + f = 3f - k_2$ on $M$ and, since $b_1 = b_2 = 0$, now \eqref{eq:E1MeanCurvature} is trivially satisfied and gives no information. Consequently, the second equation of \eqref{eq:differentialSystemTangentPartConverse} will not appear in the new system. Further, analyzing Propositions \ref{th:propertiesOurSurface} and \ref{th:propertiesBiconservativeSurface}, we obtain
	\begin{theorem}
		Let $\varphi: M^2 \to N^4(\epsilon)$ be a non-CMC, PNMC biconservative surface. Then, around any point, there exist local coordinates $(s, t)$ such that $f = f(s)$, $k_1 = k_1 (s)$, $k_2 = k_2 (s)$, $\alpha = \alpha (s)$, $a_2 = a_2 (s)$, $b_1 = b_1 (s)$ and $K = K (s)$. Moreover, the tuple $\left ( \uu, \xx, \yy \right ) = (a_2, \alpha, k_2)$ is a solution of the following first order ODE system
		\begin{equation} \label{eq:differentialSystemPNMC}
			\left \{
			\begin{array}{l}
				\displaystyle \dot \uu = \epsilon - \frac 1 3 \yy^2 - \xx^2 + \uu^2 \\[10pt]
				\displaystyle \dot \xx = 2 \xx \uu \\[5pt]
				\displaystyle \dot \yy = \frac 4 3 \yy \uu
			\end{array} 
			\right .,
		\end{equation}
		where $\dot \uu$, $\dot \xx$ and $\dot \yy$ represent the derivatives with respect to $s$ of $\uu$, $\xx$ and $\yy$, respectively and we can assume
		\[
		\uu > 0, \quad \xx > 0, \quad \yy > 0 \quad \text{and} \quad \dot \yy > 0.
		\]
	\end{theorem}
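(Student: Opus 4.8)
The plan is to re-run the chain of deductions that culminated in Theorem \ref{th:directTheorem}, now in the degenerate regime $b_1=b_2=0$ (equivalently $\nabla^\perp E_3=0$), where many of the intermediate identities trivialize or collapse. First I would rewrite the biconservativity condition \eqref{eq:biharmonicEquationTangentPart} in the PNMC setting: since $\nabla^\perp_X H=X(f)E_3$ for every $X\in C(TM)$, it reduces to $(k_1+f)E_1(f)=0$ and $(k_2+f)E_2(f)=0$. Because $\grad f\neq 0$ and $k_1\neq k_2$ (non pseudo-umbilicity), exactly one of $E_1(f),E_2(f)$ can vanish; after relabelling $E_1\leftrightarrow E_2$ and a sign choice I may take $E_1=\grad f/|\grad f|$, so that $E_1(f)>0$, $E_2(f)=0$ and $k_1=-f$, whence $k_2=2f-k_1=3f$ and $3f-k_2=0$ (this is exactly why the second equation of \eqref{eq:differentialSystemTangentPartConverse} disappears: \eqref{eq:E1MeanCurvature} becomes $0=0$). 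I would also set aside the locus where $\alpha\equiv 0$: there $A_4=0$ and the immersion reduces its codimension to a biconservative hypersurface of $N^3(\epsilon)$, a case already understood, so after restricting we may assume $\alpha\neq 0$ at every point.

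Next I would substitute $b_1=b_2=0$, $k_1=-f$, $k_2=3f$ into the identities of Propositions \ref{th:propertiesOurSurface} and \ref{th:propertiesBiconservativeSurface}. Equation \eqref{eq:E2kappa1} becomes $0=E_2(k_1)=a_1(k_2-k_1)=4a_1f$ (using $E_2(f)=0$, $k_1=-f$), forcing $a_1=0$; then \eqref{eq:E2alpha} gives $E_2(\alpha)=0$. With $a_1=0$, \eqref{eq:ConnectionFormSurface} yields $[E_1,E_2]=a_2E_2$, and computing $[E_1,E_2](\alpha)$ in two ways — once as $a_2E_2(\alpha)=0$, once via the definition of the bracket together with \eqref{eq:E1alpha} (now $E_1(\alpha)=2\alpha a_2$), which gives $-2\alpha E_2(a_2)$ — I obtain $E_2(a_2)=0$. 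Since \eqref{eq:GaussianCurvatureFromEquation} now reads $K=\epsilon-3f^2-\alpha^2$, also $E_2(K)=0$; so $E_2$ annihilates $f,k_1,k_2,\alpha,a_2,b_1$ and $K$. Finally, \eqref{eq:E1kappa1} with $k_1=-f$ yields $E_1(f)=\tfrac43 f a_2$, so $a_2\neq 0$ at every point (otherwise $\grad f=0$); eventually restricting $M$ I assume $a_2\neq 0$, i.e. $\nabla_{E_2}E_2\neq 0$.

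With $a_1=0$, $E_2(f)=0$ and $[E_1,E_2](f)=0$ in hand, the coordinate construction carried out in the proof of Proposition \ref{th:metricOnM} goes through verbatim: around any point there are coordinates $(s,t)$ with $E_1=\partial/\partial s$, $E_2=g_{22}^{-1/2}\,\partial/\partial t$, metric $g=ds^2+g_{22}(s)\,dt^2$ and $\dot g_{22}=-2a_2g_{22}$; since $E_2$ kills all the functions above, $f,k_1,k_2,\alpha,a_2,b_1,K$ depend on $s$ alone. It then remains to read off the ODEs for $\uu=a_2$, $\xx=\alpha$, $\yy=k_2$: equation \eqref{eq:GaussEquation} with $a_1=0$ gives $\dot\uu=K+\uu^2$, which via \eqref{eq:GaussianCurvatureFromEquation} and $k_1k_2=-\tfrac13k_2^2$ becomes $\dot\uu=\epsilon-\tfrac13\yy^2-\xx^2+\uu^2$; \eqref{eq:E1alpha} with $b_1=0$ gives $\dot\xx=2\xx\uu$; and \eqref{eq:E1kappa2} with $b_1=0$ and $k_2-k_1=\tfrac43k_2$ gives $\dot\yy=\tfrac43\uu\yy$ — this is precisely \eqref{eq:differentialSystemPNMC}. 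As in the closing lines of the proof of Theorem \ref{th:directTheorem}, I would verify that the remaining Gauss–Codazzi relations produce no new constraint. For the signs: $E_1=\grad f/|\grad f|$ forces $\dot f>0$, so $\yy=3f>0$ and $\dot\yy=3\dot f>0$; $E_1(f)=\tfrac43 f a_2>0$ with $f>0$ gives $\uu=a_2>0$; and flipping $E_4$ if necessary (which leaves $b_1=0$ and everything else unchanged) arranges $\xx=\alpha>0$.

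I expect the \textbf{main obstacle} to be obtaining $a_1=0$ and $E_2(a_2)=0$ without invoking the Weingarten hypothesis: in the non-PNMC treatment these came out of Lemma \ref{th:constraintNormalConnection}, whereas here they must be squeezed out of the Codazzi equations \eqref{eq:E2kappa1}, \eqref{eq:E2alpha} and a short commutator computation, and one must also be careful to excise the codimension-reducing stratum $\alpha\equiv 0$. Beyond that, the proof is a routine specialization of Propositions \ref{th:propertiesOurSurface} and \ref{th:propertiesBiconservativeSurface} together with the coordinate construction of Proposition \ref{th:metricOnM}.
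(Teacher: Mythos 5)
Your proposal is correct and takes essentially the same route as the paper: specialize Propositions \ref{th:propertiesOurSurface} and \ref{th:propertiesBiconservativeSurface} to $b_1=b_2=0$, use the tangent biconservativity equation to get $k_1=-f$, $k_2=3f$ and $E_1=\grad f/|\grad f|$, and then run the coordinate construction from the proof of Proposition \ref{th:metricOnM} to read off system \eqref{eq:differentialSystemPNMC} and the sign normalizations. The details you supply (obtaining $a_1=0$ and $E_2(a_2)=0$ directly from the Codazzi equations and a bracket computation, without the Weingarten hypothesis, and excising the codimension-reducing stratum $\alpha\equiv 0$) are exactly what the paper leaves implicit in its phrase ``analyzing Propositions \ref{th:propertiesOurSurface} and \ref{th:propertiesBiconservativeSurface}''.
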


	It was essentially proved in \cite{NistorOniciucTurgaySen2023} and \cite{NistorRusu2024} that system \eqref{eq:differentialSystemPNMC} represents the compatibility conditions of this PNMC biconservative surface problem, that is the analog of Theorem \ref{th:existenceTheorem} holds. 
	
	In this singular case there are two properties which do not hold in the non-PNMC case. First, if we fix the abstract surface $\left ( M^2, g \right )$, that is we fix $\uu$, and if there exists a PNMC biconservative immersion $\varphi : \left ( M^2, g \right ) \to N^4 (\epsilon)$, then it has to be unique, as shown in the following result.
	
	\begin{theorem}[\cite{NistorOniciucTurgaySen2023}, \cite{NistorRusu2024}, \cite{SenTurgay2018}] \label{th:uniquenessPNMC}
		If an abstract surface $\left ( M^2, g \right )$ admits two non-CMC, PNMC biconservative immersions in $N^4 (\epsilon)$, then these immersions differ by an isometry of $N^4 (\epsilon)$.		
	\end{theorem}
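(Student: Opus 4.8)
The plan is to invoke the Fundamental Theorem of Submanifolds in space forms: if two isometric immersions of a simply connected Riemannian manifold into $N^4(\epsilon)$ have the same second fundamental form and the same normal connection (under a vector bundle isometry), then they differ by an isometry of $N^4(\epsilon)$. Passing to the universal cover when needed — and using that an isometry of $N^4(\epsilon)$ is determined by its restriction to any open set, so that local isometries patch — it is enough to prove that, for a non-CMC PNMC biconservative immersion $\varphi\colon(M^2,g)\to N^4(\epsilon)$, the relevant geometric data depend only on the abstract surface $(M^2,g)$. Since $\varphi$ is PNMC we have $b_1=b_2=0$, hence $\nabla^\perp E_3=0$ and the normal connection contributes no free parameter; by \eqref{eq:SecondFundamentalForm}, the second fundamental form is then encoded in the adapted frame $\{E_1,E_2,E_3,E_4\}$ together with the functions $\uu=a_2$, $\xx=\alpha$, $\yy=k_2$ (recall that in the PNMC case $k_1=-f$ and $2f=k_1+k_2$, so $f=\yy/3$ and $k_1=-\yy/3$). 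The whole matter therefore reduces to showing that the frame and the triple $(\uu,\xx,\yy)$ are intrinsic.

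First I would show that $\uu$ is intrinsic. By the PNMC analogue of Proposition \ref{th:metricOnM} there are local coordinates $(s,t)$ with $g(s,t)=ds^2+g_{22}(s)\,dt^2$, $E_1=\partial/\partial s$, $E_2=g_{22}^{-1/2}\,\partial/\partial t$ and $\uu=a_2=-\tfrac12\,(\ln g_{22})'$. The key observation is that this coordinate system is already determined by $g$ alone. From the Gauss equation in the PNMC case, $K=\epsilon-\tfrac13\yy^2-\xx^2$, one computes $\dot K=-\uu\bigl(\tfrac89\yy^2+4\xx^2\bigr)$, so $\dot K\neq 0$ wherever $\uu\neq0$; moreover $\uu=a_2\not\equiv0$, for otherwise $\dot\yy=0$ and $f=\yy/3$ would be constant, contradicting the non-CMC hypothesis. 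Hence, on a dense open set, $E_1=\pm\grad K/|\grad K|$ is an intrinsically defined unit direction field, $s$ is its signed arc-length parameter (so that $ds$ is the $1$-form dual to $E_1$, determined up to sign and an additive constant), $t$ parametrizes the orthogonal leaves, and consequently $g_{22}$ — and therefore $\uu$ — depends only on $g$. I expect this step to be the main point requiring care: one must use $\grad K$, not $\grad f$, to pin down the coordinates, because a priori two different immersions of $(M^2,g)$ could have different mean curvature functions, and only $K$ is manifestly intrinsic.

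Once $\uu$ is fixed, let $\U$ be the primitive of $\uu$ vanishing at a chosen base point. The second and third equations of \eqref{eq:differentialSystemPNMC} are linear and integrate to $\xx=\xx_0\,e^{2\U}$ and $\yy=\yy_0\,e^{4\U/3}$ with constants $\xx_0,\yy_0>0$; substituting into the remaining equation, written as the Gauss relation $\tfrac13\yy^2+\xx^2=\epsilon-K$, gives $\tfrac13\yy_0^2\,e^{8\U/3}+\xx_0^2\,e^{4\U}=\epsilon-K$, whose right-hand side is a fixed function of $s$. As $\U$ is non-constant, $e^{8\U/3}$ and $e^{4\U}$ are linearly independent, so $\yy_0^2$ and $\xx_0^2$ are uniquely determined, and with positivity so are $\xx$ and $\yy$. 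Therefore $f=\yy/3$, $k_1=-\yy/3$, $k_2=\yy$, $\alpha=\xx$ and $b_1=b_2=0$ agree for any two such immersions, as do the adapted frame $E_1=\grad f/|\grad f|$ (up to sign), $E_2$, $E_3=H/f$, $E_4$ up to the obvious bundle isometry; by \eqref{eq:SecondFundamentalForm} the second fundamental forms and the (flat) normal connections coincide. Applying the Fundamental Theorem of Submanifolds — on the universal cover, then descending via surjectivity of the covering map — produces an isometry $\Phi$ of $N^4(\epsilon)$ with $\varphi_1=\Phi\circ\varphi_2$, which is the claim.
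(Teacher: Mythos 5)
Your proof is correct and follows essentially the same route as the paper: fix $\uu$ from the metric, integrate the linear equations of \eqref{eq:differentialSystemPNMC} to get $\xx=c_1e^{2\U}$, $\yy=c_2e^{4\U/3}$, show the constants are forced by the Gauss relation, and conclude by rigidity. The only differences are cosmetic refinements — you determine $c_1,c_2$ by linear independence of the exponentials rather than by differentiating as in \eqref{eq:equationC1C2PNMC2}, and you make explicit two points the paper leaves implicit, namely that $\uu$ is intrinsic (via $\grad K$, using $E_2(K)=0$) and the final appeal to the Fundamental Theorem of Submanifolds.
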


	\begin{proof} 
		Using \eqref{eq:differentialSystemPNMC}, we can provide a simpler proof than the one presented in \cite{NistorOniciucTurgaySen2023}, \cite{NistorRusu2024} and \cite{SenTurgay2018}. 
		
		Since we fixed the abstract surface $\left ( M^2, g \right )$, we fixed $\uu$. Let $\U$ be an arbitrarily fixed primitive of $\uu$. Using the fact that $\xx > 0$ and the second equation of \eqref{eq:differentialSystemPNMC} we obtain that 
		\begin{align*}
			\frac {\dot \xx} \xx = 2 \uu \Leftrightarrow \ln \xx = 2 \U + c_1 \Leftrightarrow \xx = e^{c_1} e^{2 \U}.
		\end{align*}
		Redenoting $c_1 = e^{c_1}$, we obtain that 
		\begin{equation*}
			\xx = c_1 e^{2 \U},
		\end{equation*}
		where $c_1$ is a positive real constant.
		
		Similarly, since $\yy > 0$, the third equation of \eqref{eq:differentialSystemPNMC} implies that
		\begin{equation*}
			\yy = c_2 e^{\frac 4 3 \U},
		\end{equation*}
		where $c_2$ is a positive real constant.
		
		Since $\U$ is a fixed primitive of $\uu$, we deduce that $\xx$ and $\yy$ are uniquely determined by $c_1$ and $c_2$. In the following we show that $c_1 $ and $c_2$ are uniquely determined by $\uu$ and $\U$.
		
		Replacing the expressions of $\xx$ and $\yy$ in the first equation of \eqref{eq:differentialSystemPNMC}, we obtain 
		\begin{equation} \label{eq:equationC1C2PNMC1}
			e^{4 \U} c_1^2 + \frac 1 3 e^{\frac 8 3 \U} c_2^2 = \uu^2 + \epsilon - \dot \uu.
		\end{equation}
		Differentiating \eqref{eq:equationC1C2PNMC1}, using the second and third equations of \eqref{eq:differentialSystemPNMC} and dividing by $4 \uu$, we obtain 
		\begin{equation} \label{eq:equationC1C2PNMC2}
			e^{4 \U} c_1^2 + \frac 2 9 e^{\frac 8 3 \U} c_2^2 = \frac {\dot \uu} 2 - \frac {\ddot \uu} {4 \uu}.
		\end{equation}
	
		Subtracting \eqref{eq:equationC1C2PNMC2} from \eqref{eq:equationC1C2PNMC1}, we obtain 
		\begin{equation*}
			c_2^2 = 9 e^{- \frac 8 3 \U} \left ( \frac {\ddot \uu} {4 \uu} - \frac {3 \dot \uu} 2 + \uu^2 + \epsilon \right ).
		\end{equation*}
		Replacing this in \eqref{eq:equationC1C2PNMC1}, we obtain
		\begin{equation*}
			c_1^2 = e^{- 4 \U} \left ( \frac {7 \dot \uu} 2 - \frac {3 \ddot \uu} {4 \uu} - 2 \uu^2 - 2 \epsilon \right ).
		\end{equation*}
		Since $c_1 > 0$ and $c_2 >0$, we obtain that $c_1$ and $c_2$ are uniquely determined by $\uu$ and $\U$. Therefore, $\xx$ and $\yy$ are unique and the conclusion follows.
	\end{proof}

	Second, we want to determine all abstract surfaces $\left (M^2, g \right )$ which admit (unique) PNMC biconservative immersions. This was done in \cite{NistorOniciucTurgaySen2023}, \cite{NistorRusu2024} and \cite{SenTurgay2018} by geometric means, but here, taking into account that the metric $g$ is determined by the function $\uu$, we find the necessary and sufficient condition that $\uu$ must satisfy. 

	\begin{proposition} [\cite{NistorOniciucTurgaySen2023}] \label{th:existencePNMC}
		An abstract surface $\left ( M^2, g \right )$ admits (unique) non-CMC, PNMC biconservative immersions in $N^4 (\epsilon)$ if and only if the function $\uu$ satisfies the following third order ODE
		\begin{equation} \label{eq:compatibilityConditionForTheSystem}
			3 \dddot \uu \uu - 3 \ddot \uu \dot \uu + 72 \dot \uu \uu^3 - 26 \ddot \uu \uu^2 - 32 \epsilon \uu^3 - 32 \uu^5 = 0.
		\end{equation}
	\end{proposition}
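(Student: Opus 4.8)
The plan is to turn the computation in the proof of Theorem~\ref{th:uniquenessPNMC} into an equivalence. Throughout, fix a primitive $\U$ of $\uu$, so that $\dot\U=\uu$, and recall from that proof that the constants there satisfy $c_2^2 e^{\frac83\U}=P$ and $c_1^2 e^{4\U}=Q$, where
\[
P:=9\Big(\uu^2+\epsilon-\tfrac32\dot\uu+\tfrac{\ddot\uu}{4\uu}\Big),\qquad Q:=\tfrac72\dot\uu-\tfrac{3\ddot\uu}{4\uu}-2\uu^2-2\epsilon .
\]

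For necessity, suppose $(M^2,g)$ admits such an immersion $\varphi$; then its associated tuple $(\uu,\xx,\yy)$ solves \eqref{eq:differentialSystemPNMC} with $\xx,\yy>0$. As in the proof of Theorem~\ref{th:uniquenessPNMC}, I would integrate the second and third equations of \eqref{eq:differentialSystemPNMC} to get $\xx=c_1e^{2\U}$, $\yy=c_2e^{\frac43\U}$ with $c_1,c_2>0$ constant, substitute into the first equation to obtain \eqref{eq:equationC1C2PNMC1}, and differentiate \eqref{eq:equationC1C2PNMC1} once (using the second and third equations again, then dividing by $4\uu$) to obtain \eqref{eq:equationC1C2PNMC2}. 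Solving the linear system \eqref{eq:equationC1C2PNMC1}--\eqref{eq:equationC1C2PNMC2} gives $c_2^2e^{\frac83\U}=P$ and $c_1^2e^{4\U}=Q$ as above. Differentiating the first relation once more and using $\frac{d}{ds}\big(c_2^2e^{\frac83\U}\big)=\frac83\uu\,c_2^2e^{\frac83\U}$ (because $c_2$ is constant and $\dot\U=\uu$), then multiplying through by $4\uu^2$ to clear denominators, yields exactly \eqref{eq:compatibilityConditionForTheSystem}.

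For sufficiency, assume $\uu>0$ satisfies \eqref{eq:compatibilityConditionForTheSystem}. A direct check shows that \eqref{eq:compatibilityConditionForTheSystem} is precisely the first order (in $P$) relation $\dot P=\frac83\uu P$, so $Pe^{-\frac83\U}$ is constant. Writing $R:=\uu^2+\epsilon-\dot\uu$, one has the algebraic identity $Q=R-\frac13P$ together with the identity $\dot R-4\uu R=-\frac49\uu P$, which holds regardless of \eqref{eq:compatibilityConditionForTheSystem}; combined with $\dot P=\frac83\uu P$ these give $\dot Q=4\uu Q$, so $Qe^{-4\U}$ is also constant. Denote the two constants by $c_2^2$ and $c_1^2$ (both positive, as is needed for $\xx,\yy$ below to be real and nowhere vanishing — equivalently, for the surface to be non-degenerate), and set $\xx:=c_1e^{2\U}$, $\yy:=c_2e^{\frac43\U}$. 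Then $\dot\xx=2\uu\xx$ and $\dot\yy=\frac43\uu\yy>0$ hold by construction, while $\tfrac13\yy^2+\xx^2=\tfrac13P+Q=R=\uu^2+\epsilon-\dot\uu$ is the first equation of \eqref{eq:differentialSystemPNMC}; hence $(\uu,\xx,\yy)$ solves \eqref{eq:differentialSystemPNMC}. Taking $g=ds^2+g_{22}(s)\,dt^2$ to be the metric determined by $\uu$ as in Proposition~\ref{th:metricOnM} ($\dot g_{22}=-2\uu g_{22}$) and invoking the PNMC analogue of Theorem~\ref{th:existenceTheorem} then produces a non-CMC, PNMC biconservative immersion of $(M^2,g)$ into $N^4(\epsilon)$, which by Theorem~\ref{th:uniquenessPNMC} is unique up to an isometry of $N^4(\epsilon)$.

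The main obstacle is organizational rather than conceptual: I must make sure that the single scalar ODE \eqref{eq:compatibilityConditionForTheSystem} encodes \emph{both} constancy requirements $\frac{d}{ds}\big(c_1^2e^{4\U}\big)=\frac{d}{ds}\big(c_2^2e^{\frac83\U}\big)=0$. The decisive point is the identity $\dot R-4\uu R=-\tfrac49\uu P$, which lets the constancy of $Qe^{-4\U}$ be read off from that of $Pe^{-\frac83\U}$, so that imposing \eqref{eq:compatibilityConditionForTheSystem} alone is enough; verifying this identity and the equivalence between \eqref{eq:compatibilityConditionForTheSystem} and $\dot P=\frac83\uu P$ are the two routine-but-delicate calculations to carry out. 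One must also keep track of the positivity of $c_1^2$ and $c_2^2$, which is built into the meaning of $(M^2,g)$ admitting a genuine non-CMC, PNMC biconservative immersion.
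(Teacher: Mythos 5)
Your argument is correct (I checked the key identities: with $P=9\bigl(\uu^2+\epsilon-\tfrac32\dot\uu+\tfrac{\ddot\uu}{4\uu}\bigr)$, $Q=\tfrac72\dot\uu-\tfrac{3\ddot\uu}{4\uu}-2\uu^2-2\epsilon$ and $R=\uu^2+\epsilon-\dot\uu$ one indeed has $Q=R-\tfrac13P$, $\dot R-4\uu R=-\tfrac49\uu P$ identically, and $\dot P-\tfrac83\uu P=0$ becomes \eqref{eq:compatibilityConditionForTheSystem} after multiplying by $\tfrac43\uu^2$ -- your ``$4\uu^2$'' gives three times the stated equation, a harmless normalization), but your route differs from the paper's, mainly in the sufficiency direction. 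For necessity the paper does not reuse the formulas for $c_1^2e^{4\U}$ and $c_2^2e^{\frac83\U}$ from Theorem \ref{th:uniquenessPNMC}; it re-parametrizes via $\uu=\tfrac34\dot f/f$, normalizes $\xx=cf^{3/2}$, $\yy=3f$, and eliminates $f^2$ between the differentiated constraint \eqref{eq:expressionFSquaredPNMC} and its derivative -- the computational content is the same, but your organization piggybacks on the uniqueness proof and is arguably leaner. For sufficiency the approaches are genuinely different: the paper prescribes initial data subject to \eqref{eq:initialConstraintsPNMC}, sets $\beta=\dot\uu-\epsilon+\tfrac13\yy^2+\xx^2-\uu^2$, and shows $\beta$ solves the linear second-order ODE \eqref{eq:differentialEquationRest} with $\beta(s_0)=\dot\beta(s_0)=0$, concluding $\beta\equiv0$ by uniqueness; you instead exhibit $Pe^{-\frac83\U}$ and $Qe^{-4\U}$ as explicit first integrals of \eqref{eq:compatibilityConditionForTheSystem} and build $\xx,\yy$ directly from them, which is more constructive and makes the mechanism (one constancy propagating to the other via $\dot R-4\uu R=-\tfrac49\uu P$) transparent, at the price of the two delicate identity verifications you mention. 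The positivity caveat you flag (the constants $c_1^2,c_2^2$ must be positive, which does not follow from the ODE alone, e.g.\ solutions with $P\equiv0$ exist) is not a defect specific to your proof: the paper's choice of initial data in \eqref{eq:initialConstraintsPNMC} amounts to requiring $\xx_0^2=Q(s_0)>0$ and $\yy_0^2=P(s_0)>0$ and is subject to the same implicit assumption, so your explicit acknowledgment of it is, if anything, an improvement. Both proofs ultimately invoke the PNMC analogue of Theorem \ref{th:existenceTheorem} to pass from a solution of \eqref{eq:differentialSystemPNMC} to an actual immersion, and Theorem \ref{th:uniquenessPNMC} for uniqueness, as you do.
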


	\begin{proof}
		For the direct implication, we consider system \eqref{eq:differentialSystemPNMC} and assume that the function $\uu$ is given. 
		
		First, we suppose that \eqref{eq:differentialSystemPNMC} with $\uu$ given has a solution $(\xx, \yy)$ and find the ODE that $\uu$ must satisfy.
		
		Since $\uu$ is smooth, there exists a positive smooth function $f$ such that $\dot f > 0$ and 
		\[
		\uu = \frac 3 4 \frac {\dot f} f.
		\]
		Note that $f$ is determined up to a multiplicative positive constant. In the following we arbitrarily fix such a $f$. Taking into account the second and third equations of \eqref{eq:differentialSystemPNMC}, we obtain that the general solution of the system, with $\uu$ given, is of the form
		\begin{equation*}
			\left \{
			\begin{array}{l}
				\displaystyle \xx = c_1 f^{\frac 3 2} \\[10pt]
				\displaystyle \yy = c_2 f
			\end{array}
			\right .,
		\end{equation*}
		for some positive real constants $c_1$ and $c_2$.
		
		If we redenote $c_2 f / 3$ by $f$ and put $c = c_1 \left ( 3 / c_2 \right )^{3/ 2} > 0$, we obtain
		\begin{equation*}
			\left \{
			\begin{array}{l}
				\displaystyle \xx = c f^{\frac 3 2} \\[10pt]
				\displaystyle \yy = 3 f
			\end{array}
			\right ..
		\end{equation*}
		Thus, taking into account these expressions of $\xx$ and $\yy$ and replacing in the first equation of \eqref{eq:differentialSystemPNMC}, we have
		\begin{align*}
			\dot \uu = & \epsilon - 3 f^2 - c^2 f^3 + \uu ^2 \\
			\Leftrightarrow c^2 = & \frac {\epsilon + \uu^2 - \dot \uu} {f^3} - \frac 3 f \\
			\Rightarrow 0 = & \frac {\left ( 2 \dot \uu \uu - \ddot \uu \right ) f^3 - 3 \dot f f^2 \left ( \epsilon + \uu^2 - \dot \uu \right )} {f^6} + \frac {3 \dot f} {f^2}.
		\end{align*}
		Multiplying this relation by $- f^3$ and taking into account that $\dot f = 4 f \uu / 3$, we obtain 
		\begin{equation} \label{eq:expressionFSquaredPNMC}
			\ddot \uu + 4 \epsilon \uu + 4 \uu^3 - 6 \dot \uu \uu - 4 \uu f^2 = 0.
		\end{equation}
		Differentiating \eqref{eq:expressionFSquaredPNMC} we get
		\begin{equation*}
			0 = \dddot \uu + 4 \epsilon \dot \uu + 12 \dot \uu \uu^2 - 6 \ddot \uu \uu - 6 \dot \uu^2 - 4 \left ( \dot \uu + \frac 8 3 \uu^2 \right ) f^2.
		\end{equation*}
		Replacing $f^2$ from \eqref{eq:expressionFSquaredPNMC} in the last relation, we get 
		\begin{align*}
			0 = & \dddot \uu + 4 \epsilon \dot \uu + 12 \dot \uu \uu^2 - 6 \ddot \uu \uu - 6 \dot \uu^2 - 4 \left ( \dot \uu + \frac 8 3 \uu^2 \right ) \frac {\ddot \uu + 4 \epsilon \uu + 4 \uu^3 - 6 \dot \uu \uu} {4 \uu} \\
			\Leftrightarrow 0 = & 3 \dddot \uu \uu + 12 \epsilon \dot \uu \uu + 36 \dot \uu \uu^3 - 18 \ddot \uu \uu^2 - 18 \dot \uu^2 \uu - \left ( 3 \dot \uu + 8 \uu^2 \right ) \left ( \ddot \uu + 4 \epsilon \uu + 4 \uu^3 - 6 \dot \uu \uu \right )
		\end{align*}
		which is equivalent to \eqref{eq:compatibilityConditionForTheSystem}.
		
		Conversely, we consider a solution $\uu$ of \eqref{eq:compatibilityConditionForTheSystem} and show that the system \eqref{eq:differentialSystemPNMC} associated to $\uu$ admits a solution $(\xx, \yy)$. 
		
		Again, we write $\uu$ as
		\[
		\uu = \frac 3 4 \frac {\dot f} f.
		\]
		We note that $f$ is determined up to multiplicative positive constants.
		
		From the second and third equations of \eqref{eq:differentialSystemPNMC} we find that, for given initial conditions $(s_0, \xx_0, \yy_0)$ there exist a unique smooth positive function $f$ and a unique positive real constant $c$ such that  
		\begin{equation*}
			\xx = c f^{\frac 3 2}, \quad \yy = 3 f \quad \text{and} \quad c f^{\frac 3 2} (s_0) = \xx_0, \quad 3 f (s_0) = \yy_0.
		\end{equation*} 
		Now we impose that the initial conditions $(s_0, \xx_0, \yy_0)$ satisfy the following two conditions 
		\begin{equation} \label{eq:initialConstraintsPNMC}
			\left \{
			\begin{array}{l}
				\displaystyle \xx_0^2 + \frac 1 3 \yy_0^2 = \epsilon + \uu^2 (s_0) - \dot \uu (s_0) \\[10pt]
				\displaystyle 6 \uu (s_0) \xx_0^2 + \frac {14} 9 \uu (s_0) \yy_0^2 = - \ddot \uu (s_0) + 2 \epsilon \uu (s_0) + 2 \uu^3 (s_0)
			\end{array}
			\right .
		\end{equation}
		and we prove that $(\xx, \yy)$ satisfies the first equation of \eqref{eq:differentialSystemPNMC}. For this, we denote 
		\begin{equation*}
			\beta = \dot \uu - \epsilon + \frac 1 3 \yy^2 + \xx^2 - \uu^2.
		\end{equation*}
		Following the same steps as in the first part of the proof and taking into account that \eqref{eq:compatibilityConditionForTheSystem} holds, we find that 
		\begin{equation} \label{eq:differentialEquationRest}
			3 \uu \ddot \beta - \left ( 3 \dot \uu + 20 \uu^2 \right ) \dot \beta + 32 \uu^3 \beta = 0.
		\end{equation}
		From \eqref{eq:initialConstraintsPNMC} we obtain that $\beta (s_0) = \dot \beta (s_0) = 0$ and, taking into account \eqref{eq:differentialEquationRest}, we find out that \eqref{eq:differentialSystemPNMC} is satisfied.
	\end{proof}
	
	\begin{remark}
		Relation \eqref{eq:compatibilityConditionForTheSystem} can be seen as the compatibility condition for the system \eqref{eq:differentialSystemPNMC} with $\uu$ given.
	\end{remark}
	
	\section{Biharmonic surfaces}
	
	In this section we provide a characterization of biharmonic W-surfaces with flat normal bundle and we show that the surfaces presented in Theorems \ref{th:exampleYZero}, \ref{th:exampleKEqualsEpsilon}, \ref{th:exampleKEquals3F2+epsilon} and Proposition \ref{th:exampleMoreGeneral} cannot be biharmonic.
	
	\begin{theorem} 
		Let $\varphi: M^2 \to N^4(\epsilon)$ be a non-CMC W-surface with flat normal bundle. Assume that $3 f^2 + K - \epsilon \neq 0$, $\nabla^\perp E_3 \neq 0$, $A_4 \neq 0$ and $\nabla_{E_2} E_2 \neq 0$ at any point. Then, $M$ is biharmonic if and only if, around any point, there exist local coordinates $(s, t)$ such that $f = f(s)$, $k_1 = k_1 (s)$, $k_2 = k_2 (s)$, $\alpha = \alpha (s)$, $a_2 = a_2 (s)$, $b_1 = b_1 (s)$, $K = K (s)$ and the following first order ODE system must be satisfied
		\begin{equation} \label{eq:differentialSystemBiharmonicity1}
			\left \{
			\begin{array}{l}
				\dot a_2 = \epsilon + k_2 (2 f - k_2) - \alpha^2 + a_2^2 \\[5pt]
				\displaystyle (3f - k_2) \dot f = - f \alpha b_1 \\[5pt]
				\dot \alpha = 2 \alpha a_2 + k_2 b_1 \\[5pt]
				\dot k_2 = 2 a_2 (k_2 - f) - \alpha b_1 \\[5pt]
				\ddot f = a_2 \dot f + f \left ( b_1^2 + k_1^2 + k_2^2 - 2 \epsilon \right ) \\[5pt]
				\displaystyle \dot b_1 = - \frac {2 b_1} f \dot f + a_2 b_1 - \alpha (k_2 - k_1)
			\end{array} 
			\right .,
		\end{equation}
		where $\dot a_2$, $\dot f$, $\dot \alpha$, $\dot k_2$, $\dot b_1$ represent the derivatives with respect to $s$ of $a_2$, $f$, $\alpha$, $k_2$ and $b_1$, respectively.
	\end{theorem}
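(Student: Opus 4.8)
The plan is to reduce everything to the normal part \eqref{eq:biharmonicEquationNormalPart} of the biharmonic system, since the tangential part \eqref{eq:biharmonicEquationTangentPart} is already under control: the first four equations of \eqref{eq:differentialSystemBiharmonicity1} coincide (after multiplying the second one by $3f-k_2\neq 0$) with the biconservativity system of Theorem \ref{th:directTheorem}, so the only genuinely new content is the last two equations, which must be extracted from \eqref{eq:biharmonicEquationNormalPart} with $m=2$.

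\emph{Direct implication.} If $\varphi$ is biharmonic it is in particular biconservative, so the whole apparatus of Section~3 applies: Lemma \ref{th:constraintNormalConnection} and Proposition \ref{th:normalConnection} let us normalise $b_2=0$, $b_1\neq 0$, and then $\alpha\neq 0$, $a_1=0$, $E_2(f)=0$, $a_2\neq 0$ as in \eqref{eq:alphaNeq0E2fZero}--\eqref{eq:a1Zero}; Proposition \ref{th:metricOnM} and Theorem \ref{th:directTheorem} then provide local coordinates $(s,t)$ in which $f,k_1,k_2,\alpha,a_2,b_1,K$ depend on $s$ alone and $(a_2,f,\alpha,k_2)$ solves the first four equations of \eqref{eq:differentialSystemBiharmonicity1}. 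I would then expand \eqref{eq:biharmonicEquationNormalPart} in the adapted frame using $H=fE_3$ and \eqref{eq:SecondFundamentalForm}, obtaining
\[
\trace B\bigl(\cdot,A_H(\cdot)\bigr)=f\bigl(k_1^2+k_2^2\bigr)E_3+f\alpha(k_1-k_2)E_4,
\]
and, writing $\Delta^\perp H=-\sum_{i=1}^2\bigl(\nabla^\perp_{E_i}\nabla^\perp_{E_i}H-\nabla^\perp_{\nabla_{E_i}E_i}H\bigr)$ and using \eqref{eq:ConnectionFormSurface}--\eqref{eq:ConnectionFormNormal} with $a_1=b_2=0$ and $E_2(f)=0$,
\[
\Delta^\perp H=\bigl(-\ddot f+fb_1^2+a_2\dot f\bigr)E_3-\bigl(2b_1\dot f+f\dot b_1-a_2fb_1\bigr)E_4.
\]
Substituting these into \eqref{eq:biharmonicEquationNormalPart} and reading off the $E_3$- and $E_4$-components yields exactly the fifth and the sixth equations of \eqref{eq:differentialSystemBiharmonicity1} (recalling $2f=k_1+k_2$).

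\emph{Converse.} Conversely, given a solution $(a_2,f,\alpha,k_2,b_1)$ of \eqref{eq:differentialSystemBiharmonicity1} (with $k_1=2f-k_2$), its first four components solve the system \eqref{eq:differentialSystemTangentPartConverse}, so Theorem \ref{th:existenceTheorem} produces a non-CMC biconservative W-surface with flat normal bundle realising these data; in particular its tangential biharmonic equation \eqref{eq:biharmonicEquationTangentPart} vanishes. Reversing the computation above, the fifth equation is precisely the $E_3$-component and the sixth the $E_4$-component of \eqref{eq:biharmonicEquationNormalPart}, so the normal part vanishes as well, and $\varphi$ is biharmonic.

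\emph{Main difficulty.} There is no conceptual obstruction: the statement is the ``normal companion'' of Theorems \ref{th:directTheorem} and \ref{th:existenceTheorem}. The one place that needs care is the expansion of $\Delta^\perp H$, where one must keep track of the normal-connection terms $\nabla^\perp_{E_1}E_3=b_1E_4$, $\nabla^\perp_{E_1}E_4=-b_1E_3$ together with the tangential term $\nabla_{E_2}E_2=a_2E_1$ and the vanishings $\nabla_{E_1}E_1=0$, $\nabla^\perp_{E_2}E_3=0$, $E_2(f)=0$, so that the $E_3$-component collapses to a clean second-order equation for $f$ and the $E_4$-component to a first-order equation for $b_1$. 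A minor secondary point, needed only in the converse, is to observe that under the present hypotheses the first four equations already force biconservativity — since $E_2(f)=0$ and $b_2=0$ turn \eqref{eq:E1MeanCurvature}--\eqref{eq:E2MeanCurvature} into the second equation of \eqref{eq:differentialSystemBiharmonicity1} — which is what makes Theorem \ref{th:existenceTheorem} applicable.
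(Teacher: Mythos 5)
Your proposal is correct and follows essentially the same route as the paper: both reduce the problem to the normal component \eqref{eq:biharmonicEquationNormalPart}, compute $\Delta^\perp H$ and $\trace B\left(\cdot, A_H(\cdot)\right)$ in the adapted frame and identify the $E_3$- and $E_4$-components with the fifth and sixth equations of \eqref{eq:differentialSystemBiharmonicity1}, the first four being the biconservative system of Theorem \ref{th:directTheorem} (the paper merely performs the $\Delta^\perp H$ computation before imposing $a_1=b_2=E_2(f)=0$ and simplifies at the end, whereas you impose these from the start; your expressions check out). For the converse, the detour through Theorem \ref{th:existenceTheorem} is unnecessary and slightly misdirected, since it produces some surface rather than the given $\varphi$; the decisive observation is the one you make in your closing remark, namely that with $E_2(f)=0$ and $b_2=0$ the second equation is exactly the tangential condition \eqref{eq:E1MeanCurvature} and the last two are the normal condition, which is precisely how the paper argues.
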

	
	\begin{proof}
		Recall that any biharmonic surface is biconservative, so the first four equations of the system are the equations derived from the tangent component of the biharmonic equation, that is system \eqref{eq:differentialSystemTangentPart}. In the following, we deduce the last two equations of the system from the normal component of the biharmonic equation \eqref{eq:biharmonicEquationNormalPart}.
		
		First, using \eqref{eq:ConnectionFormSurface} and \eqref{eq:ConnectionFormNormal}, we compute
		\begin{align*}
			\Delta ^\perp H = & \Delta ^\perp (f E_3) = - \left ( \nabla ^\perp _{E_1} \nabla ^\perp _{E_1} (f E_3) - \nabla ^\perp _{\nabla _{E_1} E_1} (f E_3) + \nabla ^\perp _{E_2} \nabla ^\perp _{E_2} (f E_3) - \nabla ^\perp _{\nabla _{E_2} E_2} (f E_3) \right ) \\
							= & - E_1 (E_1 (f)) E_3 - E_1 (f) \nabla ^\perp _{E_1} E_3 - E_1 (f) b_1 E_4 - f E_1 (b_1) E_4 - f b_1 \nabla ^\perp _{E_1} E_4 \\
							  & - a_1 E_2 (f) E_3 - a_1 f \nabla ^\perp _{E_2} E_3 - E_2 (E_2 (f)) - E_2 (f) \nabla ^\perp _{E_2} E_3 - E_2 (f) b_2 E_4 \\
							  & - f E_2 (b_2) E_4 - f b_2 \nabla ^\perp _{E_2} E_4 + a_2 E_1 (f) E_3 + a_2 f \nabla ^\perp _{E_1} E_3.
		\end{align*}
		Therefore, 
		\begin{align*}
			\Delta ^\perp H = & \Bigl ( - E_1 (E_1 (f)) - E_2 (E_2 (f)) + f \left ( b_1^2 + b_2^2 \right ) - a_1 E_2 (f) + a_2 E_1 (f) \Bigr ) E_3 \\
							  & + \Bigl ( f (b_1 a_2 - b_2 a_1) - 2 \bigl ( b_1 E_1 (f) + b_2 E_2 (f) \bigr ) - f \bigl ( E_1 (b_1) + E_2 (b_2) \bigr ) \Bigr ) E_4. \notag
		\end{align*}
		Using \eqref{eq:SecondFundamentalForm}, we obtain 
		\begin{equation*}
			\trace B (A_H (\cdot), \cdot) = f \left ( k_1^2 + k_2^2 \right ) E_3 - f \alpha (k_2 - k_1) E_4.
		\end{equation*}
		Therefore, \eqref{eq:biharmonicEquationNormalPart} is equivalent to 
		\begin{equation*}
			\left \{
			\begin{array}{l}
				- E_1 (E_1 (f)) - E_2 (E_2 (f)) + f \left ( b_1^2 + b_2^2 + k_1^2 + k_2^2 - 2 \epsilon \right ) - a_1 E_2 (f) + a_2 E_1 (f) = 0 \\[5pt]
				2 \bigl ( b_1 E_1 (f) + b_2 E_2 (f) \bigr ) + f \bigl ( E_1 (b_1) + E_2 (b_2) \bigr ) + f (a_1 b_2 - a_2 b_1) + f \alpha (k_2 - k_1) = 0
			\end{array}
			\right ..
		\end{equation*}
		Taking into account Lemma \ref{th:constraintNormalConnection}, \eqref{eq:alphaNeq0E2fZero}, \eqref{eq:a1Zero} and \eqref{eq:VanishingOfTheDerivativesInTheDirOfE2}, the conclusion follows.
	\end{proof}

	As in the biconservative case, we denote $\uu = \uu (s) = a_2 (s)$, $\vv = \vv (s) = b_1 (s)$, $\ww = \ww (s) = f (s)$, $\xx = \xx (s) = \alpha (s)$, $\yy = \yy (s) = k_2 (s)$ and $\zz = \zz (s) = \dot f (s)$ and consider 
	\[
	F : \mathbb R^* \times \mathbb R^* \times (0, \infty) \times (0, \infty) \times \mathbb R \times (0, \infty) \to \mathbb R^6
	\]
	defined by
	\begin{equation*}
		F (\uu, \vv, \ww, \xx, \yy, \zz) = 
		\begin{pmatrix}
			\epsilon + \yy ( 2 \ww - \yy) - \xx^2 + \uu^2 \\[5pt]
			\displaystyle - \frac {2 \vv \zz} \ww + \uu \vv + 2 \xx (\ww - \yy) \\[10pt]
			\zz \\[5pt]
			2 \xx \uu + \yy \vv \\[5pt]
			2 \uu (\yy - \ww) - \xx \vv \\[5pt]
			\uu \zz + \ww \left ( \vv^2 + (2 \ww - \yy)^2 + \yy^2 - 2 \epsilon \right )
		\end{pmatrix}.
	\end{equation*}
	Then, system \eqref{eq:differentialSystemBiharmonicity1} is equivalent to the following differential system with a constraint 
	\begin{equation} \label{eq:differentialSystemBiharmonicity}
		\left \{
		\begin{array}{l}
			\dot X (s) = F (X (s)) \\[5pt]
			(3 \ww (s) - \yy (s)) \zz (s) = - \ww (s) \xx (s) \vv (s)
		\end{array}
		\right ., \quad \text{for any } s,
	\end{equation}
	where $X (s) = (\uu (s), \vv (s), \ww (s), \xx (s), \yy (s), \zz (s))$.
	
	We note that the constraint of system \eqref{eq:differentialSystemBiharmonicity} will, presumably, prevent the existence of biharmonic $W$-surfaces with flat normal bundle.
	
	In the following, we show that the biconservative surfaces provided in Theorems \ref{th:exampleYZero}, \ref{th:exampleKEqualsEpsilon}, \ref{th:exampleKEquals3F2+epsilon} and Proposition \ref{th:exampleMoreGeneral} are not biharmonic. We begin with the family explored in Theorem \ref{th:exampleYZero}.
	
	\begin{theorem} \label{th:biharmonicityYEqualsZero}
		Let $\varphi: M^2 \to N^4(\epsilon)$ be a non-CMC W-surface with flat normal bundle. Assume that $3 f^2 + K - \epsilon \neq 0$ and $\nabla^\perp E_3 \neq 0$ at any point. If $M$ satisfies $A_3 (\grad f) = 2 f \grad f$, then it cannot be biharmonic.
	\end{theorem}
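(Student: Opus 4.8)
The strategy is a proof by contradiction: suppose $M$ is biharmonic. Since every biharmonic surface is biconservative and, by hypothesis, $\nabla^\perp E_3 \neq 0$ (i.e. $M$ is non-PNMC) and $3f^2 + K - \epsilon \neq 0$ at every point, the entire machinery of Section~3 applies. Biharmonicity being a local condition, I may restrict $M$ so that Lemma~\ref{th:constraintNormalConnection}, Proposition~\ref{th:normalConnection} and the normalizations \eqref{eq:alphaNeq0E2fZero} and \eqref{eq:a1Zero} hold, that is $b_2 = 0$, $b_1 \neq 0$, $\alpha \neq 0$ (so $A_4 \neq 0$), $E_2(f) = 0$, $a_1 = 0$ and $a_2 \neq 0$ (so $\nabla_{E_2} E_2 \neq 0$) at every point; in particular all the hypotheses of Theorem~\ref{th:exampleYZero} and of the biharmonic characterization \eqref{eq:differentialSystemBiharmonicity1} are satisfied. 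Since $E_2(f) = 0$ we have $\grad f \parallel E_1$, hence $A_3(\grad f) = k_1 \grad f$, so the hypothesis $A_3(\grad f) = 2f\grad f$ is equivalent to $k_1 = 2f$, and therefore $k_2 = 2f - k_1 = 0$.

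Next I would apply Theorem~\ref{th:exampleYZero}: around any point there is a primitive $\U$ of $a_2$ with $\dot\U > 0$, satisfying $\ddot\U = \epsilon - e^{4\U} + \dot\U^2$, and
\[
a_2 = \dot\U, \qquad f = c\,e^{\frac23\U}, \qquad \alpha = e^{2\U}, \qquad k_2 = 0, \qquad b_1 = -2c\,\dot\U\,e^{-\frac43\U},
\]
for a constant $c > 0$. Moreover, rewriting the first integral \eqref{eq:firstOrderDifferentialEquationMeanCurvatureCaseYZero} in terms of $\U$ via $\F = f/c = e^{2\U/3}$ gives $\dot\U^2 = C e^{2\U} - e^{4\U} - \epsilon$ for a constant $C$. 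It then remains to impose the two equations that biharmonicity adds to biconservativity, namely the fifth and sixth equations of \eqref{eq:differentialSystemBiharmonicity1}, which for $k_1 = 2f$, $k_2 = 0$ read $\ddot f = a_2\dot f + f\big(b_1^2 + 4f^2 - 2\epsilon\big)$ and $\dot b_1 = -\tfrac{2b_1}{f}\dot f + a_2 b_1 + 2\alpha f$.

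I would conclude in two steps. First, substitute the explicit solution into the sixth equation and use $\ddot\U = \epsilon - e^{4\U} + \dot\U^2$ to eliminate $\ddot\U$: all terms cancel except one that is a nonzero multiple of $c\,\epsilon\,e^{-\frac43\U}$, and since $c > 0$ this forces $\epsilon = 0$. Second, with $\epsilon = 0$ one has $\ddot\U = \dot\U^2 - e^{4\U}$ and $\dot\U^2 = C e^{2\U} - e^{4\U}$; substituting the explicit solution into the fifth equation and eliminating first $\ddot\U$ and then $\dot\U^2$ with these two relations leaves, after simplification, the identity $5\,e^{\frac{14}{3}\U} - 2C\,e^{\frac83\U} + 18 c^2 C = 0$ on the entire parameter interval. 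Since $\dot\U > 0$, the function $e^{\U/3}$ is strictly increasing, hence non-constant, so this identity — a polynomial relation in $e^{\U/3}$ valid on a nondegenerate interval — forces its leading coefficient $5$ to vanish, a contradiction. Therefore $M$ cannot be biharmonic.

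The only genuine difficulty is the bookkeeping in these two substitutions; conceptually the point is that the sixth biharmonic equation already pins down $\epsilon = 0$ (the ambient must be $\mathbb R^4$), and then the fifth equation over-determines the single remaining unknown $\U$.
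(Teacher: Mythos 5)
Your proposal is correct and follows essentially the same route as the paper: restrict so that the normalizations hold, invoke the explicit solution of Theorem \ref{th:exampleYZero}, substitute it into the extra equations coming from the normal part of the biharmonic system, and derive a nontrivial polynomial identity in a strictly monotone quantity, which is impossible. The only (harmless) difference is that the paper plugs the solution, written in the $(\F,t)$ chart, into the $\ddot f$ equation alone and gets the single polynomial $5c\F^{10}-2cC\F^{7}-10c\epsilon\F^{4}+18c^{3}C\F^{3}-18c^{3}\epsilon=0$ directly, whereas you first use the $\dot b_1$ equation to force $\epsilon=0$ (your leftover term is indeed $-2c\epsilon e^{-\frac43\U}$) and then the $\ddot f$ equation, whose resulting identity $5e^{\frac{14}{3}\U}-2Ce^{\frac83\U}+18c^{2}C=0$ is exactly the paper's polynomial specialized to $\epsilon=0$.
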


	\begin{proof}
		Assume that $M$ is biharmonic, thus it is biconservative. Eventually by restricting $M$, we can assume that $A_4 \neq 0$ and $\nabla _{E_2} E_2 \neq 0$ at any point. Locally, the system \eqref{eq:differentialSystemBiharmonicity} holds.
		
		We have seen that there exist local coordinates $(f, t)$ such that the functions $\uu$, $\vv$, $\ww$, $\xx$ and $\yy$ are given by \eqref{eq:vInTheCaseYEqualsZeroInTheChartFT} and \eqref{eq:solutionInTheCaseYEqualsZeroInTheChartFT}. From \eqref{eq:firstOrderDifferentialEquationMeanCurvatureCaseYZero} we obtain that
		\begin{equation*}
			\frac \partial {\partial s} = \frac 2 3 f \sqrt {C f^3 - c^{-6} f^6 - \epsilon} \frac \partial {\partial f}.
		\end{equation*}
		Replacing \eqref{eq:vInTheCaseYEqualsZeroInTheChartFT} and \eqref{eq:solutionInTheCaseYEqualsZeroInTheChartFT} in the sixth equation of \eqref{eq:differentialSystemBiharmonicity}, we obtain 
		\begin{equation*}
			5 c^{-9} f^{10} - 2 c^{-3} C f^7 -10 c^{-3} \epsilon f^4 + 18 c^3 C f^3 - 18 c^3 \epsilon = 0.
		\end{equation*}
		Since $c > 0$, we deduce that $f$ has to be a root of a non-zero polynomial with constant coefficients, so $f$ is constant, which is a contradiction.
	\end{proof}	

	Next, we analyze the family presented in Theorem \ref{th:exampleKEqualsEpsilon}.

	\begin{theorem} \label{th:biharmonicityKEqaulsEpsilon}
		Let $\varphi: M^2 \to N^4(\epsilon)$ be a non-CMC W-surface with flat normal bundle. Assume that $3 f^2 + K - \epsilon \neq 0$ and $\nabla^\perp E_3 \neq 0$ at any point. If $M$ has constant Gaussian curvature $K = \epsilon$, then it cannot be biharmonic.
	\end{theorem}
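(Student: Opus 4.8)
The plan is to argue by contradiction, following the scheme of the proof of Theorem~\ref{th:biharmonicityYEqualsZero}. Assume that $M$ is biharmonic; then it is biconservative, and, reasoning exactly as at the beginning of the proof of Theorem~\ref{th:biharmonicityYEqualsZero}, after possibly restricting $M$ we may assume that $A_4\neq 0$ and $\nabla_{E_2}E_2\neq 0$ at every point, so that around each point the full biharmonic system \eqref{eq:differentialSystemBiharmonicity} (equivalently \eqref{eq:differentialSystemBiharmonicity1}) holds. Since $K=\epsilon$ by hypothesis, Theorem~\ref{th:exampleKEqualsEpsilon} applies: passing to the chart $(f,t)$ via $(s,t)\to(f=f(s),t)$, the functions $\uu,\vv,\ww,\xx,\yy$ are given by \eqref{eq:vInTheCaseKEqualsEpsInTheChartFT} and \eqref{eq:solutionInTheCaseKEqualsEpsInTheChartFT}, where $f$ is a non-constant positive function satisfying the first order ODE \eqref{eq:firstOrderODEForF}; in particular $\zz=\dot f$ and, by \eqref{eq:firstOrderODEForF}, $\partial/\partial s=\tfrac{f}{3}\sqrt{9Cf^{3}-4\epsilon}\,\partial/\partial f$.

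The heart of the argument is to impose one of the two equations of \eqref{eq:differentialSystemBiharmonicity} that encode the \emph{normal} part of the biharmonic equation and are not already guaranteed by biconservativity, namely its sixth component $\dot\zz=\uu\zz+\ww\bigl(\vv^{2}+(2\ww-\yy)^{2}+\yy^{2}-2\epsilon\bigr)$ (equivalently the fifth equation of \eqref{eq:differentialSystemBiharmonicity1}, in which $k_{1}=2\ww-\yy$). On the left-hand side, $\dot\zz=\ddot f$ is a polynomial in $f$, which one reads off by differentiating \eqref{eq:firstOrderODEForF} or, more directly, from \eqref{eq:secondOrderODEForF}. On the right-hand side, after substituting these expressions every term becomes a rational function of $f$ whose coefficients depend only on $\epsilon$, $C$ and the family parameter $c>0$, the sole denominator being $2-cf$, which enters through the term $\ww\vv^{2}=fb_{1}^{2}$. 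Clearing this denominator turns the equation into a polynomial identity $P(f)\equiv 0$ holding on an interval.

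It remains to observe that $P$ is not the zero polynomial, and this is the only place where a short computation is unavoidable; a degree count settles it. In $\ww\bigl(\vv^{2}+(2\ww-\yy)^{2}+\yy^{2}-2\epsilon\bigr)$ the summands $\ww(2\ww-\yy)^{2}$ and $\ww\yy^{2}$ together produce a top term $2c^{2}f^{5}$; multiplied by the factor $9c(2-cf)$ used to clear the denominator, this contributes a term proportional to $c^{4}f^{6}$, whereas every other term of the identity has degree at most $5$. Since $c>0$, this degree-six coefficient cannot cancel, so $P\not\equiv 0$; hence $f$ would be a root of a nonzero polynomial with constant coefficients, forcing $f$ to be constant on the interval and contradicting $\grad f\neq 0$. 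Therefore $M$ cannot be biharmonic. The only genuine obstacle is bookkeeping: one must express $\ddot f$, $b_{1}$, $k_{1}$ and $\partial/\partial s$ correctly in the $(f,t)$ chart and carry out the substitution without algebraic or sign errors; conceptually the argument mirrors, step by step, the proof of Theorem~\ref{th:biharmonicityYEqualsZero}, and one could instead substitute into the sixth equation of \eqref{eq:differentialSystemBiharmonicity1} (the $\dot b_{1}$ equation) to reach the same conclusion, at the cost of somewhat messier algebra.
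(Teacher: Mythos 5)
Your proposal is correct and follows essentially the same route as the paper's proof: assume biharmonicity, pass to the $(f,t)$ chart with the data \eqref{eq:vInTheCaseKEqualsEpsInTheChartFT}--\eqref{eq:solutionInTheCaseKEqualsEpsInTheChartFT} and $\partial/\partial s=\tfrac13 f\sqrt{9Cf^3-4\epsilon}\,\partial/\partial f$, substitute into the sixth ($\dot\zz$) equation of \eqref{eq:differentialSystemBiharmonicity}, and clear the denominator to obtain a nonzero polynomial identity in $f$, contradicting $\grad f\neq 0$. Your degree count identifying the uncancellable $c^4 f^6$ term is consistent with the paper, whose explicit polynomial has leading term $-18c^4 f^6$.
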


	\begin{proof}
		Assume that $M$ is biharmonic, thus it is biconservative. Eventually by restricting $M$, we can assume that $A_4 \neq 0$ and $\nabla _{E_2} E_2 \neq 0$ at any point. Locally, the system \eqref{eq:differentialSystemBiharmonicity} holds.
		
		We have seen that there exist local coordinates $(f, t)$ such that the functions $\uu$, $\vv$, $\ww$, $\xx$ and $\yy$ are given by \eqref{eq:vInTheCaseKEqualsEpsInTheChartFT} and \eqref{eq:solutionInTheCaseKEqualsEpsInTheChartFT}. From \eqref{eq:firstOrderODEForF} we obtain that 
		\begin{equation*}
			\frac \partial {\partial s} = \frac 1 3 f \sqrt {9 C f^3 - 4 \epsilon} \frac \partial {\partial f}.
		\end{equation*}
		Replacing \eqref{eq:vInTheCaseKEqualsEpsInTheChartFT} and \eqref{eq:solutionInTheCaseKEqualsEpsInTheChartFT} in the sixth equation of \eqref{eq:differentialSystemBiharmonicity}, we obtain that
		\begin{equation*}
			-18 c^4 f^6 + 18 c^2 \left ( 4 c + C \right ) f^5 - 36 c \left ( 3 c + 2 C \right ) f^4 + 9 ( 8 c + 9 C) f^3 + 16 c^2 \epsilon f^2  - 16 c \epsilon f - 36 \epsilon = 0.
		\end{equation*}
		Since $c > 0$, we deduce that $f$ has to be a root of a non-zero polynomial with constant coefficients, contradiction.
	\end{proof}
	 
	The following result shows that the biconservative surfaces presented in Proposition \ref{th:exampleMoreGeneral} cannot be biharmonic.
	
	\begin{theorem} \label{th:biharmonicityGeneralCase}
		Let $\varphi: M^2 \to N^4(\epsilon)$ be a non-CMC W-surface with flat normal bundle. Assume that $3 f^2 + K - \epsilon \neq 0$ and $\nabla^\perp E_3 \neq 0$ at any point. If $M$ satisfies
		\begin{equation*}
			\nabla _{E_2} E_2 = \frac 3 2 \frac {\dot f} f E_1,
		\end{equation*}
		then it cannot be biharmonic.
	\end{theorem}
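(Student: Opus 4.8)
The plan is to repeat, in this more general setting, the scheme already used for Theorems \ref{th:biharmonicityYEqualsZero} and \ref{th:biharmonicityKEqaulsEpsilon}. First I would assume that $M$ is biharmonic. Then $M$ is in particular biconservative, and — after possibly shrinking $M$ so that $A_4 \neq 0$ and $\nabla_{E_2} E_2 \neq 0$ everywhere, which is harmless since $M$ is connected and a contradiction on an open set suffices — the full system \eqref{eq:differentialSystemBiharmonicity} holds around every point. The hypothesis $\nabla_{E_2} E_2 = \frac 3 2 \frac {\dot f} f E_1$ is precisely the condition characterizing Proposition \ref{th:exampleMoreGeneral}, so the tuple $(\uu, \vv, \ww, \xx, \yy, \kk)$ must be one of the three explicit families \ref{item:aGeneralization}, \ref{item:bGeneralization}, \ref{item:cGeneralization} listed there.

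Family \ref{item:bGeneralization} is disposed of at once: there $\kk = \epsilon$, i.e. $K = \epsilon$, so Theorem \ref{th:biharmonicityKEqaulsEpsilon} already forbids biharmonicity. I would also note that family \ref{item:cGeneralization} with $c_2 = 0$ reduces, by the remark following Proposition \ref{th:exampleMoreGeneral}, to the situation of Theorem \ref{th:exampleYZero}, hence is excluded by Theorem \ref{th:biharmonicityYEqualsZero}; in any case this subcase is subsumed by the computation below. It therefore remains to rule out families \ref{item:aGeneralization} and \ref{item:cGeneralization}.

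For each of these two families I would introduce the parameter $\F = \ww / c_1$ and use the corresponding first integral $\dot \F^2 = \frac 4 9 \F^2 \left( C \F^3 \pm \F^6 - \epsilon \right)$ (with $+$ for family \ref{item:aGeneralization} and $-$ for family \ref{item:cGeneralization}) to express $\partial / \partial s = \frac 2 3 \F \sqrt{C \F^3 \pm \F^6 - \epsilon}\, \partial / \partial \F$, exactly as in the proofs of Theorems \ref{th:biharmonicityYEqualsZero} and \ref{th:biharmonicityKEqaulsEpsilon}. I would then substitute the explicit expressions of $\uu, \vv, \ww, \xx, \yy$ in terms of $\F$ (written out right after Proposition \ref{th:exampleMoreGeneral}, and the analogous ones for family \ref{item:cGeneralization}) into the sixth equation of \eqref{eq:differentialSystemBiharmonicity}, namely $\ddot \ww = \uu \dot \ww + \ww \left( \vv^2 + (2 \ww - \yy)^2 + \yy^2 - 2 \epsilon \right)$ — the equation coming from the normal part of the bitension field that has not yet been used — and clear the denominators, which are only powers of $\F$ times the nonvanishing factor $\xx^2$. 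This should collapse to a polynomial identity $P(\F) = 0$ with constant coefficients depending on $c_1 > 0$, $c_2$, $C$ and $\epsilon$.

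The real work, and the only genuine obstacle, is algebraic: one must verify that $P$ is not the zero polynomial for any admissible choice of the constants, i.e. that there is no degenerate locus in parameter space on which all coefficients of $P$ vanish simultaneously. I expect this to follow by inspecting the coefficient of the highest power of $\F$ in $P$, which should turn out to be a nonzero monomial in $c_1$ — mirroring the role played by the coefficient $5c$ of $\F^{10}$ in Theorem \ref{th:biharmonicityYEqualsZero}; should the sixth equation alone fail to be conclusive in one of the two families, the second equation of \eqref{eq:differentialSystemBiharmonicity} (the one for $\dot \vv$) supplies a further polynomial relation to exploit. Once $P$ is known to be a nonzero polynomial with constant coefficients, the identity $P(\F) = 0$ forces $\F$, and hence $f$, to be constant, contradicting the non-CMC hypothesis; this completes the proof.
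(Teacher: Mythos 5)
Your proposal matches the paper's (only sketched) proof: the paper simply states that Theorem \ref{th:biharmonicityGeneralCase} is proved like Theorems \ref{th:biharmonicityYEqualsZero} and \ref{th:biharmonicityKEqaulsEpsilon}, i.e.\ by passing to the parameter $\F$ via the first integral, substituting the explicit expressions of $\uu, \vv, \ww, \xx, \yy$ from Proposition \ref{th:exampleMoreGeneral} into the sixth equation of \eqref{eq:differentialSystemBiharmonicity}, and concluding that $\F$ satisfies a non-trivial polynomial equation with constant coefficients, hence is constant, contradicting non-CMC. The one step you leave as "expected" --- checking that the resulting polynomial is not identically zero for admissible $c_1, c_2, C, \epsilon$ --- is precisely the computation the paper also leaves implicit, so your route is essentially identical to the intended one.
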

	
	The proof of Theorem \ref{th:biharmonicityGeneralCase} is similar to the proofs of Theorems \ref{th:biharmonicityYEqualsZero} and \ref{th:biharmonicityKEqaulsEpsilon}.
	
	As in the previous cases, the biconservative surfaces presented in Theorem \ref{th:exampleKEquals3F2+epsilon} are not biharmonic.
	
	\begin{theorem} \label{th:biharmonicityKEquals3F2+epsilon}
		Let $\varphi: M^2 \to N^4(\epsilon)$ be a non-CMC W-surface with flat normal bundle. Assume that $\left \langle \nabla ^\perp _{E_1} E_3, E_4 \right \rangle \left \langle \nabla ^\perp _{E_2} E_3, E_4 \right \rangle = 0$ on $M$ and $\nabla^\perp E_3 \neq 0$ at any point. If $M$ satisfies $3 f^2 + K - \epsilon = 0$, then it cannot be biharmonic.
	\end{theorem}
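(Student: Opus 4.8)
The plan is to run the same argument as in the proofs of Theorems~\ref{th:biharmonicityYEqualsZero} and~\ref{th:biharmonicityKEqaulsEpsilon}, this time built on the explicit solution supplied by Theorem~\ref{th:exampleKEquals3F2+epsilon}. Assume, for a contradiction, that $M$ is biharmonic; then it is biconservative, and (eventually restricting $M$, exactly as in those proofs) we may assume $A_4\neq 0$ and $\nabla_{E_2}E_2\neq 0$ at every point. Since $\left\langle \nabla^\perp_{E_1}E_3,E_4\right\rangle\left\langle \nabla^\perp_{E_2}E_3,E_4\right\rangle=0$, $\nabla^\perp E_3\neq 0$ and $3f^2+K-\epsilon=0$, Theorem~\ref{th:exampleKEquals3F2+epsilon} applies: in suitable local coordinates $(f,t)$ the functions $a_2,b_1,\alpha,k_2,K$ are exactly the ones written out, in these coordinates, just after Theorem~\ref{th:exampleKEquals3F2+epsilon} (with $c<0$), $f$ solves the first integral $\dot f^2=f^2\bigl(Cf^{3/2}-16f^2-\tfrac{16}{9}\epsilon\bigr)$, and consequently
\[
\frac{\partial}{\partial s}=f\sqrt{Cf^{3/2}-16f^2-\tfrac{16}{9}\epsilon}\;\frac{\partial}{\partial f}.
\]

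Next I would bring in the extra equation coming from biharmonicity. In the derivation of the fifth and sixth equations of \eqref{eq:differentialSystemBiharmonicity1} from the normal part \eqref{eq:biharmonicEquationNormalPart}, the hypothesis $3f^2+K-\epsilon\neq 0$ entered only through Lemma~\ref{th:constraintNormalConnection}, and there only to guarantee $b_1b_2=0$; since $b_1b_2=0$ is now assumed from the outset, the computation of $\Delta^\perp H$ and of $\trace B(A_H(\cdot),\cdot)$ goes through verbatim, so the sixth equation of \eqref{eq:differentialSystemBiharmonicity} still holds on $M$, namely
\[
\ddot f = a_2\,\dot f + f\bigl(b_1^2+k_1^2+k_2^2-2\epsilon\bigr),\qquad k_1=2f-k_2.
\]

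Then I would substitute the explicit formulas for $a_2,b_1,k_1,k_2$ from Theorem~\ref{th:exampleKEquals3F2+epsilon} into this relation, writing $\dot f$ and $\ddot f$ as functions of $f$ by means of the displayed expression of $\partial/\partial s$. The radicals cancel consistently and, after clearing denominators, one is left with a polynomial identity in $\sqrt f$ whose coefficients depend only on the constants $c$, $C$ and $\epsilon$. The key point — and the place where the bookkeeping must be done carefully — is that this polynomial is not identically zero: its top-degree coefficient is a nonzero multiple of a power of $c$, and $c<0$. Hence $f$ would be a root of a fixed nonzero polynomial, therefore locally constant, contradicting $\grad f\neq 0$; so $M$ cannot be biharmonic. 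The only real difficulty I anticipate is this final simplification, in particular checking that the leading coefficient of the resulting polynomial is genuinely nonvanishing, together with the minor point, addressed above, of making the sixth equation of \eqref{eq:differentialSystemBiharmonicity} available even though $3f^2+K-\epsilon=0$.
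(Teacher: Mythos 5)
Your proposal is correct and follows precisely the route the paper intends (the paper only remarks that the proof is ``similar'' to Theorems \ref{th:biharmonicityYEqualsZero} and \ref{th:biharmonicityKEqaulsEpsilon}): you rightly note that $3f^2+K-\epsilon\neq 0$ entered the biharmonic system only through $b_1b_2=0$, which is here assumed, and you substitute the explicit $(f,t)$-solution of Theorem \ref{th:exampleKEquals3F2+epsilon} into the $\ddot f$-equation of \eqref{eq:differentialSystemBiharmonicity} to force $f$ to be a root of a fixed nonzero polynomial. Carrying out the substitution, the relation in $u=\sqrt f$ has leading term $-120\,u^{5}$, an absolute constant rather than the multiple of a power of $c$ you anticipated, so the nonvanishing you flagged holds even more directly (no use of $c<0$ is needed).
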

	
	\section{Further developments}
	
	Inspired by Theorems \ref{th:biharmonicityYEqualsZero}, \ref{th:biharmonicityKEqaulsEpsilon}, \ref{th:biharmonicityGeneralCase} and \ref{th:biharmonicityKEquals3F2+epsilon}, we formulate the following conjecture.
	
	\begin{conjecture} \label{th:OpenProblem}
		Let $\varphi: M^2 \to N^4(\epsilon)$ be a non-CMC W-surface with flat normal bundle. Assume that $3 f^2 + K - \epsilon \neq 0$ and $\nabla ^\perp E_3 \neq 0$ at any point. Then, $M$ cannot be biharmonic.
	\end{conjecture}
	
	If Conjecture \ref{th:OpenProblem} proves to be true, then we obtain the classification of biharmonic W-surfaces with flat normal bundle in $N^4 (\epsilon)$. More precisely, we would have 
	\begin{theorem} [true under validity of Conjecture \ref{th:OpenProblem}] \label{th:theoremAfterOpenProblem} 
		Let $\varphi : M^2 \to N^4 (\epsilon)$ be a proper biharmonic W-surface with flat normal bundle. Assume that $3 f^2 + K - \epsilon \neq 0$ at any point. Then, $\epsilon > 0$, that is $N^4 (\epsilon)$ is the $4$-dimensional sphere $\mathbb S^4 (\epsilon)$, and the image $\varphi (M)$ lies minimally in the small hypersphere $\mathbb S^3 (2 \epsilon)$.
	\end{theorem}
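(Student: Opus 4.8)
The plan is to use the Open Problem to dispose of the non-CMC case and then to invoke the known rigidity and classification results for CMC/PMC biharmonic surfaces. Let $\varphi:M^2\to N^4(\epsilon)$ be a proper biharmonic W-surface with flat normal bundle satisfying $3f^2+K-\epsilon\neq 0$ at every point. Being biharmonic, $\varphi$ is biconservative, and since biharmonic immersions into analytic manifolds are real-analytic and $\varphi$ is proper, $H\neq 0$ on a dense open subset, on which we work. \textbf{Step 1: $M$ must be CMC.} Suppose $\grad f\neq 0$ on some open set $U$. On $U$ the immersion $\varphi$ is a non-CMC biconservative W-surface with flat normal bundle and, since pseudo-umbilical biconservative surfaces in $N^4(\epsilon)$ are CMC, $A_3\neq f\,\id$ on $U$. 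If $\nabla^\perp E_3\neq 0$ somewhere on $U$, then after shrinking $U$ all the hypotheses of the Open Problem hold, so $\varphi|_U$ cannot be biharmonic --- a contradiction.

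It remains to exclude, on $U$, the subcase $\nabla^\perp E_3=0$ (i.e.\ $\varphi|_U$ PNMC), and the plan is a short direct computation. Here Proposition~\ref{th:propertiesOurSurface} applies, so $B$ has the form \eqref{eq:SecondFundamentalForm} and $A_4$ has eigenvalues $\pm\alpha$. With $H=fE_3$ and $\nabla^\perp E_3=0$ one finds that $\Delta^\perp H=(\Delta f)E_3$ has no $E_4$-component, while $\trace B(\cdot,A_H(\cdot))=f(k_1^2+k_2^2)E_3+f\alpha(k_1-k_2)E_4$; hence the $E_4$-component of the normal biharmonic equation \eqref{eq:biharmonicEquationNormalPart} reads $f\alpha(k_1-k_2)=0$, and since $f\neq 0$ and $k_1\neq k_2$ (non-pseudo-umbilicity) we get $\alpha=0$, i.e.\ $A_4=0$. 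Being PNMC with $A_4=0$, $\varphi|_U$ reduces its codimension, so $\varphi(U)$ lies in a totally geodesic $N^3(\epsilon)\subset N^4(\epsilon)$ as a non-CMC proper biharmonic surface --- impossible, since proper biharmonic surfaces of $3$-dimensional space forms are CMC. Therefore $\grad f\equiv 0$ and $M$ is CMC.

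\textbf{Step 2: the CMC case.} Now $f$ is a positive constant. If $\epsilon=0$, a CMC biharmonic submanifold of $\mathbb R^4$ is minimal, contradicting properness; so $\epsilon\neq 0$ and, by the rigidity theorem of \cite{MontaldoOniciucRatto2016JGeoAn}, $M$ is PMC. For a PMC surface $\Delta^\perp H=0$, so pairing \eqref{eq:biharmonicEquationNormalPart} with $H$ gives $|A_H|^2=2\epsilon|H|^2$; together with $|A_H|^2\ge 2|H|^4$ (Cauchy--Schwarz, using $\trace A_H=2|H|^2$) this forces $\epsilon\ge |H|^2>0$. Hence $\epsilon>0$, that is $N^4(\epsilon)=\mathbb S^4(\epsilon)$, and the classification of proper biharmonic PMC surfaces in $\mathbb S^4$ then identifies $\varphi(M)$ with a minimal surface of the small hypersphere $\mathbb S^3(2\epsilon)$, as claimed.

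\textbf{Expected main obstacle.} The two places needing external input are the assertion that proper biharmonic surfaces of $3$-dimensional space forms are CMC (Step~1) and the precise classification of proper biharmonic PMC surfaces in $\mathbb S^4$ (Step~2, where one must also rule out product-type examples, which require ambient dimension at least $5$); everything else is bookkeeping. It is also worth recording that the surfaces in the conclusion really do satisfy $3f^2+K-\epsilon\neq 0$: they are pseudo-umbilical with $f^2=\epsilon$ and $K=2\epsilon-\lambda^2$, where $\pm\lambda$ are the principal curvatures of $M$ in $\mathbb S^3(2\epsilon)$, so the condition amounts to $\lambda^2\neq 4\epsilon$ and is non-vacuous.
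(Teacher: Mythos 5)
Your argument is correct (and, like the paper's, conditional on the Open Problem), and it follows the same overall decomposition: split off the non-CMC case, handle its non-PNMC part by the Open Problem, and settle the CMC case by the PMC rigidity/classification results. The difference is that where the paper simply cites the literature, you substitute direct arguments for two of the three inputs. For the non-CMC, PNMC subcase the paper invokes \cite{NistorOniciucTurgaySen2023}, \cite{NistorRusu2024} and \cite{SenTurgay2018}, whereas you extract the $E_4$-component of \eqref{eq:biharmonicEquationNormalPart} (using Proposition \ref{th:propertiesOurSurface}, whose hypotheses you correctly check on the open set where $\grad f \neq 0$ and $A_3 \neq f\id$) to get $f\alpha(k_1-k_2)=0$, hence $A_4=0$, then use the codimension-reduction remark and the classical fact that proper biharmonic surfaces in $N^3(\epsilon)$ are CMC; this is a legitimate, more self-contained exclusion, at the price of importing the $3$-dimensional result (Caddeo--Montaldo--Oniciuc), which the paper does not need. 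For the CMC case the paper cites \cite{Oniciuc2002} for $\epsilon>0$, while you re-derive it from the rigidity theorem of \cite{MontaldoOniciucRatto2016JGeoAn} plus the pointwise pairing $|A_H|^2=2\epsilon|H|^2\geq 2|H|^4$ (handling $\epsilon=0$ separately via the CMC-implies-minimal fact in $\mathbb R^4$); both routes still rely on \cite{BalmusOniciuc2009} for the final identification of $\varphi(M)$ as a minimal surface of $\mathbb S^3(2\epsilon)$. Your bookkeeping with analyticity to pass from ``$\grad f=0$ on a dense open set'' to global CMC, and the closing check that the surfaces in the conclusion satisfy $3f^2+K-\epsilon\neq 0$ unless $\lambda^2=4\epsilon$, are points the paper leaves implicit; they are welcome additions, not gaps.
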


	\begin{proof}
		First, suppose that $M$ is CMC. From a result in \cite{Oniciuc2002}, we obtain that $\epsilon > 0$ and, taking into account the main result of \cite{BalmusOniciuc2009}, we deduce that $\varphi(M)$ lies minimally in the small hypersphere $\mathbb S^3 (2 \epsilon)$.
		
		In the non-CMC case, it was proved in \cite{NistorOniciucTurgaySen2023}, \cite{NistorRusu2024} and \cite{SenTurgay2018} that there are no non-CMC, PNMC proper biharmonic surfaces in space forms.
		
		If Conjecture \ref{th:OpenProblem} proves to be true, then there are no non-CMC, non-PNMC proper biharmonic W-surfaces with flat normal bundle satisfying $3 f^2 + K - \epsilon \neq 0$ at any point immersed in $N^4 (\epsilon)$.
		
		We note that a minimal surfaces in $\mathbb S^3 (2 \epsilon)$ satisfies all conditions from the hypothesis.
	\end{proof}	
	
	Even if Theorem \ref{th:theoremAfterOpenProblem} will be an important result in the theory of biharmonic surfaces in $4$-dimensional space forms, and presumably hard to prove, it will represent just an intermediary step for a more general and difficult problem. In fact, the most important open problem for this topic is
	
	\begin{conjecture} \label{th:conjecture}
		Let $\varphi : M^2 \to N^4 (\epsilon)$ be a proper biharmonic immersion. Then, $\epsilon > 0$, that is $N^4 (\epsilon)$ is the $4$-dimensional sphere $\mathbb S^4 (\epsilon)$, and the image $\varphi (M)$ lies minimally in the small hypersphere $\mathbb S^3 (2 \epsilon)$.
	\end{conjecture}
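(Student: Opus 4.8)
The plan is to argue by a dichotomy on the mean curvature function $f = |H|$, treating the CMC and the non-CMC cases separately. In the CMC case $\varphi$ is in particular biconservative, so the result of Oniciuc \cite{Oniciuc2002} on CMC proper biharmonic submanifolds of space forms forces $\epsilon > 0$ (one may first reduce to the PMC situation via the rigidity of \cite{MontaldoOniciucRatto2016JGeoAn}), and the main result of \cite{BalmusOniciuc2009} then yields that $\varphi(M)$ lies minimally in the small hypersphere $\mathbb S^3(2\epsilon)$ of $\mathbb S^4(\epsilon)$, which is exactly the asserted conclusion. It therefore remains to exclude the non-CMC case entirely.

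For the non-CMC case, a proper biharmonic surface is biconservative, so one may assume $\grad f \neq 0$ and $A_3 \neq f \id$ at every point and work in the adapted frame $\{E_a\}_{a=1}^4$. The strategy has three steps: (i) prove that such a surface necessarily has flat normal bundle, $R^\perp = 0$, and is a Weingarten surface, i.e. satisfies $\langle \grad f, (\grad K)^\perp \rangle = 0$ as in \eqref{eq:WSurfaceEquaition}, which places $\varphi$ inside the class studied throughout this paper; (ii) if $\varphi$ is PNMC, conclude by the non-existence of non-CMC PNMC proper biharmonic surfaces in space forms established in \cite{NistorOniciucTurgaySen2023}, \cite{NistorRusu2024} and \cite{SenTurgay2018}; (iii) if $\varphi$ is non-PNMC, exclude biharmonicity via the Open Problem (Theorem \ref{th:theoremAfterOpenProblem}) on the locus where $3f^2 + K - \epsilon \neq 0$, and via Theorem \ref{th:biharmonicityKEquals3F2+epsilon} on the locus where $3f^2 + K - \epsilon = 0$ and $b_1 b_2 = 0$. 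Steps (ii)--(iii) would then rule out every non-CMC possibility.

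The difficulty is genuine and is concentrated in steps (i) and (iii). Showing that an arbitrary non-CMC proper biharmonic surface in $N^4(\epsilon)$ automatically has flat normal bundle and is Weingarten is at present known only under the PNMC assumption, and would demand a delicate interplay of the Gauss, Codazzi and Ricci equations with the normal component \eqref{eq:biharmonicEquationNormalPart} of the biharmonic equation to force simultaneously $R^\perp = 0$ and the Weingarten identity. Even granting (i), the Open Problem itself has to be settled: one must push the analysis of the augmented ODE system \eqref{eq:differentialSystemBiharmonicity} — the biconservative system \eqref{eq:differentialSystemTangentPart} subject to the constraint $(3\ww - \yy)\zz = -\ww\xx\vv$ produced by the normal part of the bitension field — so as to show, as in the partial results of Theorems \ref{th:biharmonicityYEqualsZero}, \ref{th:biharmonicityKEqaulsEpsilon}, \ref{th:biharmonicityGeneralCase} and \ref{th:biharmonicityKEquals3F2+epsilon}, that the overdeterminacy forces $f$ to be constant, contradicting $\dot f > 0$. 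Finally, the case $b_1 b_2 \neq 0$ — which necessarily has $3f^2 + K - \epsilon = 0$ — is not covered even at the biconservative level and would require a separate treatment. For these reasons the statement is recorded here as a conjecture rather than a theorem.
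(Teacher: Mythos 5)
The statement you were asked about is stated in the paper as a \emph{conjecture}: the paper offers no proof of it, only the conditional classification of Theorem \ref{th:theoremAfterOpenProblem}, which assumes in addition that the surface is Weingarten, has flat normal bundle, satisfies $3f^2+K-\epsilon\neq 0$ everywhere, and whose proof is contingent on the unresolved Open Problem. Your proposal does not (and cannot, with what is currently known) close this; what it does is reproduce essentially the same conditional route the paper uses: CMC case via \cite{Oniciuc2002} and \cite{BalmusOniciuc2009}, non-CMC PNMC case via \cite{NistorOniciucTurgaySen2023}, \cite{NistorRusu2024}, \cite{SenTurgay2018}, and non-CMC non-PNMC case via the Open Problem. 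You also correctly locate the genuine gaps: there is no argument in the paper (or elsewhere, as far as the paper indicates) forcing an arbitrary non-CMC proper biharmonic surface in $N^4(\epsilon)$ to have flat normal bundle and to be Weingarten; the Open Problem itself, i.e.\ the incompatibility of the constrained system \eqref{eq:differentialSystemBiharmonicity}, is only verified in the special families of Theorems \ref{th:biharmonicityYEqualsZero}, \ref{th:biharmonicityKEqaulsEpsilon}, \ref{th:biharmonicityGeneralCase} and \ref{th:biharmonicityKEquals3F2+epsilon}; and the locus where $b_1b_2\neq 0$ (which forces $3f^2+K-\epsilon=0$) is not covered even at the biconservative level, since Theorem \ref{th:biharmonicityKEquals3F2+epsilon} assumes $b_1b_2=0$. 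One further technical point you would eventually have to address in step (iii) is the decomposition of $M$ into the open set where $3f^2+K-\epsilon\neq 0$ and its complement, arguing on open dense subsets and using connectedness to patch the conclusions. So the verdict is: your plan is faithful to the paper's own (conditional) reasoning and honestly flags exactly the missing ingredients, but it is a programme, not a proof — which is consistent with the statement being recorded as Conjecture \ref{th:conjecture}.
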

	
	Taking into account a result in \cite{OniciucPHD}, the above statement can be rephrased as
	
	\noindent \textbf{Conjecture 2'.} Let $\varphi : M^2 \to N^4 (\epsilon)$ be a proper biharmonic immersion. Then, $\epsilon > 0$, that is $N^4 (\epsilon)$ is the $4$-dimensional sphere $\mathbb S^4 (\epsilon)$, and $|H| = \sqrt \epsilon$.

	\bibliographystyle{abbrv}
	\bibliography{Bibliography.bib}

\begin{thebibliography}{10}

\bibitem{BalmusOniciuc2009}
A.~Balmu\c{s} and C.~Oniciuc.
\newblock Biharmonic surfaces of {$\Bbb S^4$}.
\newblock {\em Kyushu J. Math.}, 63(2):339--345, 2009.

\bibitem{BibiSoretVille2024Arxiv}
H.~Bibi, M.~Soret, and M.~Ville.
\newblock Biharmonic hypersurfaces in euclidean spaces.
\newblock 2025, arXiv:2410.13546.

\bibitem{CaddeoMontaldoOniciuc2002}
R.~Caddeo, S.~Montaldo, and C.~Oniciuc.
\newblock Biharmonic submanifolds in spheres.
\newblock {\em Israel J. Math.}, 130:109--123, 2002.

\bibitem{CaddeoMontaldoOniciucPiu2014}
R.~Caddeo, S.~Montaldo, C.~Oniciuc, and P.~Piu.
\newblock Surfaces in three-dimensional space forms with divergence-free
  stress-bienergy tensor.
\newblock {\em Ann. Mat. Pura Appl. (4)}, 193(2):529--550, 2014.

\bibitem{Chen1991}
B.-Y. Chen.
\newblock Some open problems and conjectures on submanifolds of finite type.
\newblock {\em Soochow J. Math.}, 17(2):169--188, 1991.

\bibitem{ChenBook2015}
B.-Y. Chen.
\newblock {\em Total mean curvature and submanifolds of finite type}, volume~27
  of {\em Series in Pure Mathematics}.
\newblock World Scientific Publishing Co. Pte. Ltd., Hackensack, NJ, second
  edition, 2015.

\bibitem{DajczerTojeiroBook2019}
M.~Dajczer and R.~Tojeiro.
\newblock {\em Submanifold theory}.
\newblock Universitext. Springer, New York, 2019.

\bibitem{Dimitric1992}
I.~Dimitri\'{c}.
\newblock Submanifolds of {$E^m$} with harmonic mean curvature vector.
\newblock {\em Bull. Inst. Math. Acad. Sinica}, 20(1):53--65, 1992.

\bibitem{FetcuLoubeauOniciuc2021}
D.~Fetcu, E.~Loubeau, and C.~Oniciuc.
\newblock Bochner-{S}imons formulas and the rigidity of biharmonic
  submanifolds.
\newblock {\em J. Geom. Anal.}, 31(2):1732--1755, 2021.

\bibitem{FetcuNistorOniciuc2016}
D.~Fetcu, S.~Nistor, and C.~Oniciuc.
\newblock On biconservative surfaces in 3-dimensional space forms.
\newblock {\em Comm. Anal. Geom.}, 24(5):1027--1045, 2016.

\bibitem{FetcuOniciuc2022}
D.~Fetcu and C.~Oniciuc.
\newblock Biharmonic and biconservative hypersurfaces in space forms.
\newblock In {\em Differential geometry and global analysis---in honor of
  {T}adashi {N}agano}, volume 777 of {\em Contemp. Math.}, pages 65--90. Amer.
  Math. Soc., [Providence], RI, 2022.

\bibitem{FuHongYangZhan2025}
Y.~Fu, M.-C. Hong, D.~Yang, and X.~Zhan.
\newblock Biconservative hypersurfaces with constant scalar curvature in space
  forms.
\newblock {\em Ann. Mat. Pura Appl. (4)}, 204(3):1269--1292, 2025.

\bibitem{FuZhan2023}
Y.~Fu and X.~Zhan.
\newblock Biconservative hypersurfaces with three distinct principal curvatures
  in a space form.
\newblock In {\em Lecture Notes of Seminario Interdisciplinare di Matematica},
  volume~16, pages 57--70. 2023.

\bibitem{HasanisVlachos1995}
T.~Hasanis and T.~Vlachos.
\newblock Hypersurfaces in {$E^4$} with harmonic mean curvature vector field.
\newblock {\em Math. Nachr.}, 172:145--169, 1995.

\bibitem{Ishikawa1992}
S.~Ishikawa.
\newblock Biharmonic {$W$}-surfaces in {$4$}-dimensional pseudo-{E}uclidean
  space.
\newblock {\em Mem. Fac. Sci. Kyushu Univ. Ser. A}, 46(2):269--286, 1992.

\bibitem{Jiang1987}
G.~Y. Jiang.
\newblock The conservation law for {$2$}-harmonic maps between {R}iemannian
  manifolds.
\newblock {\em Acta Math. Sinica}, 30(2):220--225, 1987.

\bibitem{LoubeauMontaldoOniciuc2008}
E.~Loubeau, S.~Montaldo, and C.~Oniciuc.
\newblock The stress-energy tensor for biharmonic maps.
\newblock {\em Math. Z.}, 259(3):503--524, 2008.

\bibitem{LoubeauOniciuc2014}
E.~Loubeau and C.~Oniciuc.
\newblock Biharmonic surfaces of constant mean curvature.
\newblock {\em Pacific J. Math.}, 271(1):213--230, 2014.

\bibitem{MontaldoOniciucRatto2016JGeoAn}
S.~Montaldo, C.~Oniciuc, and A.~Ratto.
\newblock Biconservative surfaces.
\newblock {\em J. Geom. Anal.}, 26(1):313--329, 2016.

\bibitem{MontaldoOniciucRatto2016Annali}
S.~Montaldo, C.~Oniciuc, and A.~Ratto.
\newblock Proper biconservative immersions into the {E}uclidean space.
\newblock {\em Ann. Mat. Pura Appl. (4)}, 195(2):403--422, 2016.

\bibitem{Nistor2017}
S.~Nistor.
\newblock On biconservative surfaces.
\newblock {\em Differential Geom. Appl.}, 54:490--502, 2017.

\bibitem{NistorOniciucTurgaySen2023}
S.~Nistor, C.~Oniciuc, N.~C. Turgay, and R.~Ye\u gin~\c Sen.
\newblock Biconservative surfaces in the 4-dimensional {E}uclidean sphere.
\newblock {\em Ann. Mat. Pura Appl. (4)}, 202(5):2345--2377, 2023.

\bibitem{NistorRusu2024}
S.~Nistor and M.~Rusu.
\newblock Intrinsic characterizations of biconservative surfaces in the
  four-dimensional hyperbolic space.
\newblock {\em Mediterr. J. Math.}, 21(8):Paper No. 225, 27, 2024.

\bibitem{Oniciuc2002}
C.~Oniciuc.
\newblock Biharmonic maps between {R}iemannian manifolds.
\newblock {\em An. \c{S}tiin\c{t}. Univ. Al. I. Cuza Ia\c{s}i. Mat. (N.S.)},
  48(2):237--248, 2002.

\bibitem{OniciucPHD}
C.~Oniciuc.
\newblock Tangency and harmonicity properties. {PHD T}hesis.
\newblock {\em Geometry Balkan Press, Bucharest}, 2003.
\newblock http://www.mathem.pub.ro/dgds/mono/dgdsmono.htm.

\bibitem{SenTurgay2018}
R.~Ye\u gin~\c Sen and N.~C. Turgay.
\newblock On biconservative surfaces in 4-dimensional {E}uclidean space.
\newblock {\em J. Math. Anal. Appl.}, 460(2):565--581, 2018.

\end{thebibliography}
\end{document}